\documentclass{elsarticle}

\usepackage[utf8]{inputenc}
\usepackage{amsmath}
\usepackage{amssymb, amsthm, units, bm, esint}
\usepackage[T1]{fontenc}
\usepackage{srcltx}
\usepackage{graphicx}
\usepackage{subfig}
\usepackage{accents}
\usepackage{natbib}
\usepackage{hyperref}
\usepackage[colorinlistoftodos]{todonotes}
\usepackage{geometry}
\usepackage{soul}
\newtheorem{thm}{Theorem}[section]
\newtheorem*{thm*}{Theorem}
 \newtheorem{cor}[thm]{Corollary}
 \newtheorem{lem}[thm]{Lemma}
 \newtheorem*{lem*}{Lemma}
 \newtheorem{prop}[thm]{Proposition}
 \newtheorem{prop*}{Proposition}[section]
 
 \theoremstyle{definition}
 
 \theoremstyle{remark}
 
 \numberwithin{equation}{section}

\def\tens#1{\mathbb{#1}}
\def\vec#1{\boldsymbol{#1}}
\def \R{\mathbb{R}}
\def \N{\mathbb{N}}

\newcommand{\diff}{\mathop{}\!\mathrm{d}}
\newcommand{\wstar}{\overset{*}{\rightharpoonup}}

\def\bT{\tens{T}}

\def\bD{\tens{D}}
\def\bG{\tens{G}}
\def\bO{\tens{O}}

\def\bI{\tens{I}}

\def\bF{\tens{F}}

\def\bB{\tens{B}}
\def\bA{\tens{A}}

\def\bH{\tens{H}}

\def\bQ{\tens{Q}}

\def\bu{\vec{u}}
\def\bv{\vec{v}}
\def\bw{\vec{w}}

\def\bn{\vec{n}}

\def\bq{\vec{q}}

\def\be{\begin{equation}}
\def\ba{\begin{array}}
\def\ea{\end{array}}
\def\ee{\end{equation}}

\DeclareMathOperator{\diver}{div}
\DeclareMathOperator{\Diver}{Div}

\begin{document}

\begin{frontmatter}

\title{On three-dimensional flows of thermo-viscoelastic fluids of Giesekus type\tnoteref{mytitlenote}}
\tnotetext[mytitlenote]{Miroslav Bul\'{i}\v{c}ek and  Tom\'{a}\v{s} Los supported by the project No. 25-16592S financed by GAČR. Miroslav Bul\'{i}\v{c}ek is a member of the Nečas Center for Mathematical Modeling. Jakub Woźnicki was supported by National Science Centre, Poland through project no. 2023/49/N/ST1/02737. }

\author[Bulicek-address]{Miroslav Bul\'{i}\v{c}ek}
\ead{mbul8060@karlin.mff.cuni.cz}
\author[Bulicek-address]{Tom\'{a}\v{s} Los}
\ead{los@karlin.mff.cuni.cz}
\author[1,2]{Jakub Wo\'{z}nicki}
\ead{jw.woznicki@student.uw.edu.pl}

\address[Bulicek-address]{Charles University, Faculty of Mathematics and Physics,  Sokolovsk\'{a}~83, 186~75 Praha~8, Czech Republic}
\address[1]{University of Warsaw, Faculty of Mathematics, Informatics and Mechanics, Stefana Banacha 2, 02-097 Warszawa, Poland}
\address[2]{Institute of Mathematics of Polish Academy of Sciences, Jana i J\k edrzeja \'Sniadeckich 8, 00-656 Warsaw, Poland}


\begin{abstract}
Viscoelastic rate-type fluid models constitute a fundamental framework for the mathematical description of complex materials exhibiting coupled elastic and viscous effects, with a wide range of applications in engineering, biomaterials, and medicine. In realistic regimes, thermal effects are essential and lead to strongly coupled systems in which heat conduction and temperature-dependent constitutive laws play a decisive role.

In this paper, we develop a thermodynamically consistent model for heat-conducting viscoelastic rate-type fluids. We establish the existence of a global weak solution in the full three-dimensional setting. In contrast to the existing literature, no smallness, regularity, or structural restrictions on the initial data are imposed beyond natural energy and entropy bounds, and no additional regularising mechanisms such as artificial stress diffusion are required.

The analysis is based on a weak–strong framework combining energy and entropy balances with compactness tools, allowing us to treat the full nonlinear coupling between the fluid velocity, the temperature, and the elastic stress.

\end{abstract}

\begin{keyword}
viscoelasticity \sep heat conducting fluids \sep Giesekus model \sep weak solution \sep long-time existence \sep large-data existence 
\MSC{35K55\sep 35M13 \sep 35Q35 \sep 76A10}
\end{keyword}

\end{frontmatter}

\section{Introduction}

The main goal of the paper is to establish a long-time, large-data existence result for a system of partial differential equations describing the flow of an incompressible viscoelastic rate-type heat-conducting fluid model in a bounded $d$-dimensional domain $\Omega \subset \mathbb{R}^d$ over the time interval $(0,T)$ with $T>0$. Denoting $Q_T:=(0,T)\times \Omega$, the system of equations we are interested in is the following system assumed to be satisfied in $Q_T$:
\begin{align}
\label{1tBurg}
\diver \bv&=0,\\
\label{2tBurg}
\rho\partial_t \bv + \rho\diver (\bv\otimes \bv) - \diver \tens{T}&=\rho\vec{f},\\
\label{3tBurg}
\partial_t \bB +\mbox{Div} (\bB\otimes \bv) - \nabla \bv \bB -\bB (\nabla \bv)^T+ \tens{H}&=\tens{O},\\
\label{4tBurg}
\rho\partial_t e+\rho\diver(e\bv)+\diver \bq-\tens{T}:\bD&=0.
\end{align}
Here, $\rho >0$ is the constant density, $\bv:Q_T \to \mathbb{R}^d$ is the velocity field, $p:Q_T\to \mathbb{R}$ denotes the pressure, $\tens{T}:Q_T\to \mathbb{R}^{d\times d}_{\textrm{sym}}$ denotes the Cauchy stress tensor, $\vec{f}:Q_T \to \mathbb{R}^d$ denotes the density of the external body forces, $\tens{B}:Q_T\to \mathbb{R}^{d\times d}_{\textrm{sym}}$ describes the left Cauchy--Green tensor related to the natural configuration, and $\tens{H}:Q_T\to \mathbb{R}^{d\times d}_{\textrm{sym}}$ is responsible for the dissipative effects in the extra stress tensor and relaxation processes, while $\bq:Q_T \to \mathbb{R}^d$ is the heat flux and $e:Q_T\to \mathbb{R}_+$ denotes the internal energy of the fluid. Equation \eqref{1tBurg} is the incompressibility condition, the balance of linear momentum is described by \eqref{2tBurg}, the evolution of $\tens{B}$ is described by \eqref{3tBurg}, which contains the upper convected Oldroyd derivative, and the balance of internal energy is given by \eqref{4tBurg}. The system must be completed by the constitutive relations. Here, we assume that the primary variable is also the temperature $\theta:Q_T\to \mathbb{R}_+$, and the constitutive relations are of the form:
\begin{align}
\bq&=-\kappa(\theta)\nabla \theta, \label{heat}\\
\tens{H}&=\tau(\theta)\left(\bB^2 - \bB\right)+\alpha(\theta)(\tens{B}-\tens{I}),\label{Giesekus}\\
\label{CauchtBurg}
\tens{T}&=-p \bI+2\nu(\theta) \bD+ 2\rho g(\theta) \bB, \\
\label{etBurg}
e&=c_v \theta+ (g(\theta)-\theta g'(\theta))(\mbox{tr } \bB - d - \ln \det \bB), \\
\label{BeqFFT}
\bB&=\bF \bF^T, \\
\det \bF&>0.  \label{detFform}
\end{align}
The heat flux is assumed to satisfy the classical Fourier law with the heat conductivity $\kappa(\theta)$, the tensorial quantity $\tens{H}$ represents the combination of the Giesekus and Oldroyd models for dissipation with the relaxation parameters $\tau(\theta)$ and $\alpha(\theta)$, and the Cauchy stress is given as the Newtonian part~ $\nu(\theta) \bD$, represented by the viscosity $\nu(\theta)$ and by the symmetric velocity gradient 
$$
\bD:=\frac{\nabla \bv + (\nabla \bv)^T}{2},
$$
and the extra stress tensor $2\rho g(\theta) \bB$. Moreover, we require the matrix $\tens{B}$ to be positive definite, which is guaranteed by \eqref{BeqFFT}. Moreover, the symbol $\tens{O}$ denotes the zero tensor in $\mathbb{R}^{d\times d}$.  Finally, the internal energy $e$ is given by \eqref{etBurg}. 

The system of equations above must be completed by appropriate initial and boundary conditions. Hence,
for the boundary conditions, the paper deals with the following set of boundary conditions 
\begin{equation}\label{tBurgbc}
\begin{split}
\bv\cdot \bn=0, \quad (\tens{T}\bn)_{\tau} = -\gamma_*\bv_{\tau}, \qquad \nabla \theta\cdot \bn = 0 \qquad \textrm{ on } (0,T) \times \partial \Omega.   
\end{split}
\end{equation}
Here,  $\bI$ is the identity tensor, $\vec{z}_\tau:= \vec{z}-(\vec{z}\cdot \bn) \bn$ stands for the tangential part of a vector $\vec{z}\in \R^{ d}$ with respect to $\partial \Omega$, where $\bn$ denotes the unit outward normal vector.The above boundary conditions describe the so-called Navier slip boundary condition for the velocity field, where $\gamma_* \ge 0$ is the slip parameter and the zero flux boundary condition for the internal energy. 

To complete the system, we prescribe the following initial conditions for  $\bv$, $\bF$, and $\theta$ 
\begin{equation}\label{tic}
\begin{split}
\bv(0,\vec{\cdot})=\bv_0, \quad \bF(0,\vec{\cdot})=\bF_{0}, \quad 
\theta(0, \vec{\cdot})= \theta_0 \quad \textrm{ in }\Omega
\end{split} 
\end{equation}
where $\bv_0$, $\bF_0$ and $\theta_0$ are given. Furthermore, the initial conditions for the internal energy and for the tensor~$\bB$ are then given as 
$$
\begin{aligned}
\bB_0(0,\vec{\cdot}):=\bF_0 \bF_0^T,\quad 
e_0(0,\vec{\cdot}):= c_v \theta_0 + (g(\theta_0)-\theta_0 g'(\theta_0))(\mbox{tr } \bB_0-d-\ln \det \bB_0) \qquad \textrm{ in } \Omega.
\end{aligned}
$$

In the above model, we introduced several material parameters and functions. Our main goal is to establish the theory for a general large class of material parameters. Hence, we consider that $\rho, c_v \in (0,\infty)$ and $\gamma_*\in [0,\infty)$ are given constants and that $g, \nu, \tau, \kappa: \mathbb{R}{+}\to\mathbb{R}{+}$ are given continuous functions satisfying for some $K\ge1$.
\begin{align}
        K^{-1}&\leq \nu(\theta)\leq K, \label{estnu}
    \\
     K^{-1}&\leq \tau(\theta)\leq K, \label{esttau}
    \\ 
     K^{-1}&\leq \kappa(\theta)\leq K,\label{estkappa}\\
     0&\le \alpha(\tau)\le K,\label{alphaest}\\
     0&\le g(\theta)\le K. \label{estg}
\end{align}
In addition, we require $g\in C^2(\mathbb{R}_+)$ and that it satisfies 
\begin{align}
g''(\theta)&\le 0\le g'(\theta) \qquad \textrm{ for all }\theta\in (0,\infty), 
 \label{propg}  \\
    \limsup_{\theta\to\infty} \theta  g'(\theta)&=:L<\infty. \label{growthg01}
\end{align}
Our main result of the paper is formulated as follows.
\begin{thm*}
Let $\Omega\subset \mathbb{R}^d$ be Lipschitz and let the initial conditions belong to the natural energy-entropy function space. Assume that the parameters satisfy the assumptions \eqref{estnu}--\eqref{growthg01}. Then there exists a weak dissipative global-in-time solution to the system \eqref{1tBurg}--\eqref{tic}.     
\end{thm*}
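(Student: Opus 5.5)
We argue by approximation, uniform a priori bounds, and compactness, in the framework of weak solutions satisfying the total energy equality and an entropy inequality.

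\textbf{Identities and a priori bounds.} We first identify the entropy $\eta = c_v\ln\theta - g'(\theta)(\mathrm{tr}\,\bB - d-\ln\det\bB)$ and the free energy $\psi=c_v(\theta-\theta\ln\theta)+g(\theta)(\mathrm{tr}\,\bB-d-\ln\det\bB)$. The total energy $E=\frac{\rho}{2}|\bv|^2+\rho e$ then satisfies a conservation law. Formally testing \eqref{2tBurg} by $\bv$, \eqref{3tBurg} by $g(\theta)(\bI-\bB^{-1})$, and the temperature equation by $1/\theta$ gives the entropy production
$$\frac{\kappa|\nabla\theta|^2}{\theta^2}+\frac{2\nu|\bD|^2}{\theta}+\frac{\rho g(\theta)}{\theta}\,\tens{H}:(\bI-\bB^{-1})\ge0.$$
The last term contains $\tau(\bB^2-\bB):(\bI-\bB^{-1})=\tau|\bB-\bI|^2$ and $\alpha(\bB-\bI):(\bI-\bB^{-1})\ge0$. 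Since $g'\ge0$, concavity \eqref{propg} makes $g-\theta g'\ge0$, so $e\ge c_v\theta$ is coercive. The energy gives $\bv\in L^\infty L^2$ and $\theta\in L^\infty L^1$, and $\mathrm{tr}\,\bB-d-\ln\det\bB$ weighted by $(g-\theta g')$ lies in $L^\infty L^1$. The entropy inequality, using $\kappa\ge K^{-1}$ and \eqref{growthg01} to control $g'(\theta)(\mathrm{tr}\,\bB-\dots)$, gives $\nabla\ln\theta\in L^2$, $\nabla\theta^{\beta/2}\in L^2$ for $\beta<1$, and a dissipation bound on $\frac{g}{\theta}|\bB-\bI|^2$.

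\textbf{Obtaining bounds on $\bv$ and $\bB$ independent of $\theta$.} Because $\theta$ may degenerate, I would not use the entropy for these. Instead I test \eqref{3tBurg} directly by $\bI$ (trace) and \eqref{2tBurg} by $\bv$ to get a mechanical energy identity. The Giesekus term $\tau\,\mathrm{tr}(\bB^2)\ge K^{-1}|\bB|^2$ then gives $\bB\in L^\infty L^1\cap L^2L^2$. The stress $g\bB$ is bounded in $L^2$ by \eqref{estg}, which yields $\bv\in L^2W^{1,2}$. The term $\nabla\bv\bB$ tested by $\bI$ is cancelled by $\rho g\bB:\bD$ only after weighting, and the mismatch $g(\theta)$ versus a constant is handled by the uniform bounds and Gronwall.

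\textbf{Approximation scheme.} I would use a Galerkin scheme in $\bv$ together with truncation and regularization (e.g. $\varepsilon\Delta\bB$, cutting the convective term). I would keep $\bB$ positive definite via the $\bF$-formulation, or via a $\ln\det$ estimate from the $\bB^{-1}$ test. The temperature would be solved as a parabolic equation in the internal energy with renormalized source. Limits are then taken in the order: Galerkin, then truncation, then $\varepsilon\to0$.

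\textbf{Compactness: the main obstacle.} Compactness of $\bv$ and $\theta$ is standard via Aubin--Lions, using time-derivative bounds and Lemma-type estimates on $\nabla\theta^{\beta/2}$. The main obstacle is strong convergence of $\bB$: without stress diffusion, \eqref{3tBurg} is hyperbolic, so weak limits of $\bB^2$ and of $\nabla\bv\bB$ cannot be identified. My plan is to use the strict convexity of the Giesekus relaxation. I compare the weak limit $\overline{\bB^2}$ with $\bB^2$ through the equation for the defect $\overline{|\bB|^2}-|\bB|^2$, derived by renormalizing (testing by $\bB$) both at the approximate level and for the limit equation. In 3D this needs $\bB\nabla\bv\cdot\bB$ to be meaningful, which may fail. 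I would therefore work in a weaker functional: take $\mathrm{tr}\,\bB$ or $\ln\det\bB$ combined with the monotone structure, or use Feireisl-type defect measures, showing the defect $\mathcal{D}=\overline{\mathrm{tr}\,\bB^2}-\mathrm{tr}\,\bB^2$ satisfies
$$\partial_t\mathcal{D}+\diver(\mathcal{D}\bv)\le C|\nabla\bv|\mathcal{D}-\tau\,\mathcal{D}$$
with zero initial data. If $|\nabla \bv|\in L^1L^\infty$ is unavailable, I would instead use a truncated quantity (e.g. $\mathrm{tr}\,\bB$ in $L^1$ with a logarithmic test) and show that the defect vanishes with a Gronwall argument weighted by $\exp(-\int|\nabla\bv|)$ along the flow. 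Only this would give $\bB_n\to\bB$ a.e. and identify $g(\theta)\bB$, $\tens{H}$, and the energy flux, completing the passage to the limit in the energy equality and the entropy inequality. The weak–strong (dissipative) nature of the solution allows the limit to hold only as an energy inequality plus entropy inequality where the defect cannot be closed.
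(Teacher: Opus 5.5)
Your overall architecture (energy plus entropy inequality, approximation, then compactness of $\theta$ and of the elastic variable) matches the paper. However, the a priori bound on $\bB$ and $\nabla\bv$ fails as you describe it. Testing \eqref{3tBurg} by $c\,\bI$ and \eqref{2tBurg} by $\bv$ leaves the cross term $2\int(g(\theta)-c)\,\bB:\bD$. After Young's inequality, the piece $C\int|g(\theta)-c|\,|\bD|^2$ is not of Gronwall type. It can be absorbed into $\nu|\bD|^2$ only if the oscillation of $g$ is small relative to the bounds on $\nu,\tau$. So your claimed $\bB\in L^2(Q_T)$, and hence $g\bB\in L^2$, is circular.

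The paper takes $c=g_\infty=\lim_{\theta\to\infty}g(\theta)$ and works in the $\bF$ variables, so that $\varepsilon|g-g_\infty||\bF\bF^T|^2$ is absorbed by $g_\infty\tau|\bF\bF^T|^2$. It then bounds $\int|g(\theta)-g_\infty|\,|\bD|^2$ (cf. \eqref{pom4est2}) using exactly the temperature-weighted entropy information you chose not to use:
\begin{itemize}
\item The plain entropy gives $\int_{\{\theta\le M\}}|\bD|^2\le C(M)$.
\item The rescaled entropy $\eta_\lambda=c_v\theta^\lambda/\lambda-h_\lambda(\theta)\tilde\psi(\bB)$ of Proposition \ref{otherform} gives $\int|\bD|^2\theta^{\lambda-1}<\infty$ for $\lambda\in(0,1)$. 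The terms $(\tau|\bB-\bI|^2-2\bB:\bD)(g'\theta^\lambda-h_\lambda)$ are absorbed on $\{\theta\ge M\}$, for $M$ large, thanks to the decay $\theta^{1+\delta}g'(\theta)\lesssim 1$ and Lemma \ref{suithlambdatheta}.
\item The same decay gives $g_\infty-g(\theta)\lesssim\theta^{-\delta}$, so choosing $\lambda=1-\delta$ closes the estimate.
\end{itemize}
This is the paper's key new estimate. Likewise, your bound on $\nabla\theta^{\beta/2}$ comes from $\eta_\lambda$ only once $\bB,\bD\in L^2$ are known, not from the plain entropy.

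The compactness step also has gaps. For $\theta$, Aubin--Lions is not available. The quantities whose time derivatives you control, $e$ and $\eta$, contain $\tilde\psi(\bB)$, which has no spatial compactness, and there is no usable bound on $\partial_t\theta$. The paper gets $\theta_k\to\theta$ a.e. from the div--curl lemma applied to $(\eta_k,\eta_k\bv_k-\kappa\nabla\ln\theta_k)$ and $(\tilde\psi(\bB_k),\tilde\psi(\bB_k)\bv_k)$ against $(\theta_k^{1/5},0,0,0)$. It uses $\tilde\psi\ge 0$ and the monotonicity of $-g'$ to discard the coupling term $g'(\theta_k)\tilde\psi(\bB_k)$ without any information on $\bB_k$.

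For $\bB$ you name the obstruction but do not resolve it; note that a defect built on the linear quantity $\mathrm{tr}\,\bB$ vanishes identically. The missing idea is to work with $\bF$, where $\bB=\bF\bF^T$. Since $\bF\in L^4(Q_T)$, the product $\nabla\bv_k:\bF_k\bF_k^T$ lies in $L^1$, and $\overline{|\bF|^2}-|\bF|^2$ (that is, $\mathrm{tr}\,\bB$ viewed as quadratic in $\bF$) is a meaningful defect. Because these products are not equi-integrable, the paper:
\begin{itemize}
\item adds a cut-off $G_m(|\bF_k|^2)$;
\item splits $\bv_k$ into a Stokes part forced by $g(\theta_k)\bF_k\bF_k^T$ and a part whose gradient converges strongly;
\item uses Chacon's biting lemma with Proposition \ref{bitingT} to get $\lim\int(\nabla\bv^1_k-\nabla\bv^1):g(\theta_k)\bF_k\bF_k^T\psi=-\lim\int|\bD^1_k-\bD^1|^2\psi$ on the biting sets.
\end{itemize}
Dividing by $g(\theta_k)$ (via Egoroff) requires the a.e. convergence of $\theta_k$, so the order must be $\theta$ first, then $\bF$. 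The result is a transport inequality for the defect with coefficient $1+2|\nabla\bv|+R|\bF|^2\in L^2(Q_T)$, closed by a uniqueness lemma for such inequalities. Neither $\nabla\bv\in L^1(0,T;L^\infty)$ nor a flow-weighted Gronwall argument is needed. Your approximation scheme is in the right spirit, but each limit passage that removes smoothing (notably the stress diffusion) relies on exactly these two arguments.
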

Although the above theorem precisely describes the key result of the paper, it is formulated rather vaguely. Therefore, in the remainder of this introduction, we provide a detailed description of what we mean by a dissipative entropy solution and identify the correct function spaces for initial data. Furthermore, we give an indication of the thermodynamical background of the above system and also of the role of various boundary conditions. We also highlight the key novelty compared to the existing literature. The final results are then precisely and rigorously proven in Section~\ref{estandwss} (see Theorem~\ref{THM1}) and Section~\ref{Sec4} (see Theorem~\ref{THM2}). While in Section~\ref{estandwss}  we deal only with the sequential stability of the solution, Section~~\ref{Sec4} provides the complete existence proof.

\subsection{Thermodynamical background and several notions of the solution}


There is a long journey in building the physical background for the studied model, which goes back to \cite{Burg}, who derived a model for describing viscoelastic and relaxation effects in fluids. Such an approach was then extended by the celebrated work of Oldroyd~\cite{oldroyddef}, who put viscoelastic rate-type models on a rigorous theoretical background. Later, due to several inconsistencies between observations, Giesekus~\cite{GIESEKUS} proposed a new model with better properties, and this model is now frequently used. In fact, it is the model described by \eqref{1tBurg}--\eqref{etBurg}, with a constant temperature instead. There are also much more general classes of such fluids, see e.g. \cite{johnson1977amodel}. Furthermore, stress diffusion effects can be incorporated, which then simplify the analysis of the corresponding model. However, a reasonable thermodynamical background for such models, including temperature effects, was missing. The first theoretical results dealing with such models including temperature effects are due to \cite{Rs20}, where the complete thermodynamical background is carefully built and the essential quantities are identified, which are later also used in the analytical parts. The Rajagopal and Srinivasa approach led to a huge development not only at the level of modelling but also at the level of analytical results. We further refer the reader to \cite{Rs20,Rs24,MTR5,MP28} for references on particular applications.

Moreover, this approach, where the Helmholtz free energy plays a central role, was then adopted to develop a large cascade of possible models with full complexity; see, for example, \cite{Hron17,MRT,Ma18}. We may also refer to~\cite{DEO99} for the first systematic approach or to \cite{HuLe}, where the associated entropy estimates were derived for the first time.

To follow this idea, we introduce the Helmholtz free energy
\begin{equation}
\psi(\theta,\bB):= -c_v \theta (\ln \theta -1) + g(\theta)\tilde{\psi}(\bB),
\label{Helm}
\end{equation}
where $\tilde{\psi}$ is of the classical form\footnote{Note that it may have many different forms depending on the context. The only physically motivated requirement is that it has a minimum for $\bB=\bI$ and it tends to infinity whenever $|\bB| \to \infty$ or $|\bB^{-1}|\to \infty$.}
\begin{equation}
\label{deftildepsi}
    \tilde{\psi}(\bB)= \mbox{tr } \bB-d-\ln \det \bB.
\end{equation}
Then the general concept for dealing with other thermodynamical potentials is the following. We define the entropy $\eta$ as
\begin{equation} \label{etaH}
\eta(\theta,\bB):= -\frac{\partial \psi(\theta,\bB)}{\partial \theta},
\end{equation}
and the internal energy as
\begin{equation}\label{eH}
e(\theta,\bB):=\psi(\theta,\bB) +\theta \eta (\theta,\bB).
\end{equation}
With our particular choice of the free energy \eqref{deftildepsi}, we see that $e$ has the form \eqref{etBurg}. Moreover, it follows from the definition of $\tilde{\psi}$ that $\tilde{\psi}(\bB)\ge 0$. Then, using \eqref{etBurg}, we see that $e$ is a nondecreasing function of the temperature if and only if the function $g$ is concave, see assumption \eqref{propg}. Furthermore, the entropy $\eta$ is given by
\begin{align}
        \label{defentpsi}
        \eta(\theta,\bB)&:= c_v \ln \theta - g'(\theta) \tilde{\psi}(\bB).
\end{align}
Next, we define the total energy as the sum of the kinetic and internal energies, i.e.
\begin{align}
        \label{deftotE}
        E(\theta,\bB,\bv)&:= \frac{|\bv|^2}{2}+e(\theta,\bB).
\end{align}

We now derive the equations for these two quantities, and our main goal is to replace \eqref{4tBurg} by something formally equivalent at the level of classical solutions. The reason for this consideration is that the sought solution will not be regular enough (or we do not know whether it exists) to give any meaning to \eqref{4tBurg}. To derive the equation for $E$, it is enough to take the scalar product of \eqref{2tBurg} with $\bv$, sum it with \eqref{4tBurg}, and, using the identity \eqref{1tBurg}, we obtain
\begin{equation}
    \label{evE}
        \rho\partial_t E + \rho\diver\left((E+p) \bv\right)-\diver \left(\kappa(\theta)\nabla \theta\right)= \diver \big((2\nu(\theta) \bD + 2\rho g(\theta)\bB) \bv\big),
\end{equation}

To derive the equation for $\eta$, we divide \eqref{4tBurg} by $\theta$ to obtain
\begin{equation}\label{pentder1}
\begin{split}
    \frac{\rho}{\theta} \left(\partial_t e+\diver(e\bv)\right)+\diver \left(\frac{\bq}{\theta}\right)=\frac{\tens{T}:\bD}{\theta} - \frac{\bq \cdot \nabla \theta}{\theta^2} = \frac{2\nu(\theta)|\bD|^2+ 2\rho g(\theta)\bD:\bB}{\theta} + \frac{\kappa(\theta)|\nabla \theta|^2}{\theta^2},
\end{split}
\end{equation}
where we used the constitutive relations
\eqref{CauchtBurg} and \eqref{heat}. Then, we take the scalar product of \eqref{3tBurg} with $\partial_{\bB}\psi(\theta,\bB)$ to deduce
\begin{equation}
\left(\partial_t \bB +\mbox{Div} (\bB\otimes \bv)\right):\partial_{\bB}\psi(\theta,\bB) -\left( \nabla \bv \bB +\bB (\nabla \bv)^T\right): \partial_{\bB}\psi(\theta,\bB)+ \tens{H}:\partial_{\bB}\psi(\theta,\bB)=\tens{O}.\label{helpeta} 
\end{equation}
Since $\diver \bv = 0$, we can rewrite the first term on the left-hand side with the help of the definitions of $\eta$ and $e$ in \eqref{etaH} and \eqref{eH} as
$$
\begin{aligned}
\left(\partial_t \bB +\mbox{Div} (\bB\otimes \bv)\right):\partial_{\bB}\psi(\theta,\bB)&=\left(\partial_t \psi(\theta,\bB) +\diver (\psi(\theta,\bB)\bv \right) +\left(\partial_t \theta +\diver (\theta \bv)\right) \eta(\theta,\bB)\\
&=\left(\partial_t (e(\theta,\bB)-\theta \eta(\theta,\bB)) +\diver ((e(\theta,\bB)-\theta \eta(\theta,\bB))\bv \right) +\left(\partial_t \theta +\diver (\theta \bv)\right) \eta(\theta,\bB)\\
&=\left(\partial_t e(\theta,\bB)+\diver(e(\theta,\bB)\bv)\right) -\theta \left(\partial_t\eta(\theta,\bB) +\diver (\eta(\theta,\bB)\bv) \right).
\end{aligned}
$$
We also evaluate the second and the third terms on the left-hand side of \eqref{helpeta} as follows. We note that it follows from \eqref{Helm} and \eqref{deftildepsi} that $\partial_{\bB}\psi(\theta,\bB)= g(\theta)(\bI - \bB^{-1})$, provided that $\bB$ is positive definite. Using this, the fact that $\diver \bv =0$ and the definition of $\bH$ in \eqref{Giesekus}, we see that 
$$
\begin{aligned}
 \left(\bH - \nabla \bv \bB -\bB (\nabla \bv)^T\right): \partial_{\bB}\psi(\theta,\bB)&=g(\theta) \left(\tau(\theta)\left(\bB^2 - \bB\right)+\alpha(\theta)(\tens{B}-\tens{I})- \nabla \bv \bB -\bB (\nabla \bv)^T\right):\left(\bI-\bB^{-1}\right)\\
 &=g(\theta)\left( \tau(\theta)|\bB-\bI|^2 + \alpha(\theta) |\bB^{\frac12}-\bB^{-\frac12}|^2 +2\bD:\bB\right).
\end{aligned}
$$
Finally, we insert this relation into \eqref{helpeta}, multiply the result by $\rho/\theta$, and subtract it from \eqref{pentder1} to deduce the entropy identity
\begin{equation}\label{eventr}
    \rho\partial_t \eta+\rho\diver (\eta \bv)-\diver \left(\frac{\kappa(\theta) \nabla \theta}{\theta}\right)= \frac{\kappa(\theta)|\nabla \theta|^2}{\theta^2}+\frac{2\nu(\theta)|\bD|^2+\rho\tau(\theta)  g(\theta)|\bB-\bI|^2+ \rho \alpha(\theta)g(\theta)|\bB^{\frac12} - \bB^{-\frac12}|^2}{ \theta}.
\end{equation}

Hence, we formally derived two identities: one for the total energy \eqref{evE} and the other for the entropy \eqref{eventr}. Note that both are formally equivalent to \eqref{4tBurg}. However, this represents a very fundamental change of perspective when dealing with these identities. The internal energy equation is, in fact, deduced from the equation for the total energy, which plays the role of the second law of thermodynamics. Therefore, \eqref{evE} should be preferred, and one should focus on it and try to formulate it in a weak sense. However, contrary to the classical Navier--Stokes equations, in \eqref{evE} a pressure appears that must be introduced as an integrable quantity. This is the key drawback, and we know that this might not be possible, for example, for Dirichlet boundary conditions. On the other hand, one can follow the ideas in \cite{FeMa06} for the spatially periodic setting or \cite{BuFeMa09,BuMaRa09} for the case of Navier's slip boundary conditions.

Since pressure $p$ might not exist for Dirichlet boundary conditions, we can go further and require only that the total energy satisfies the so-called energy inequality. That is, integrating \eqref{evE} over $\Omega$, using integration by parts and the boundary conditions \eqref{tBurgbc}, we obtain (we can even replace the equality sign by an inequality)
\begin{equation}
    \label{evEO}
 \frac{d}{dt} \rho \int_{\Omega}  E + \gamma_* \int_{\partial \Omega} |\bv|^2 \le 0,
\end{equation}
where no pressure appears. In fact, here we lose the formal equivalence with the internal energy balance. Therefore, we complete the system with \eqref{eventr}, where we replace the equality sign by the inequality sign $\ge$, which is perfectly motivated by the second law of thermodynamics. Note that if we have a smooth solution to \eqref{1tBurg}--\eqref{3tBurg}, which in addition satisfies \eqref{evEO} and \eqref{eventr} with the inequality sign ``$\ge$", then it must also satisfy \eqref{4tBurg}. Hence, we see that we have ``deduced'' a notion of a solution that is compatible with the classical solution if the latter exists. Such a solution will be called a dissipative weak solution. We want to emphasise that, in the context of heat-conducting fluids, such an approach was introduced by  E.~Feireisl, and we refer the interested reader to the classical books \cite{Fe04,Feireisl2017}.

This is our preferred choice for the definition of a solution. To simplify the subsequent discussion, we now set $\rho\equiv 1$ and also $\alpha(\theta)\equiv 0$. Note that this does not change the analysis at all.

\subsection{Reformulation of the rate-type model}

The next difficulty we have to deal with arises in equation \eqref{3tBurg}. Thanks to the energy estimates (derived later), we shall find that the tensorial quantities that are not under derivatives are only $L^1$-integrable, and therefore it may not be easy to identify the limit and to construct the solutions. Therefore, we introduce another quantity that will formally play the role of the square root of $\bB$. Here, we closely follow the idea of Masmoudi~\cite{Mas} and introduce the following
\begin{equation}
    \label{Fev}
    \partial_t \bF + \mathrm{Div } (\bF\otimes \bv) - \nabla \bv \bF + \frac{\tau (\theta)}{2} (\bF \bF^T \bF-\bF)=\bO,
\end{equation}
Assume for a moment that $\bv$, $\theta$, and $\bF$ are smooth and satisfy \eqref{Fev}. Then, define
$$
\bB:= \bF \bF^T,
$$
and we show that $\bB$ satisfies \eqref{3tBurg}. Indeed, multiplying \eqref{Fev} from the right by $\bF^T$ and using that $\diver \bv = 0$, we obtain
$$
\partial_t \bF \bF^T + \bv \cdot (\nabla \bF \bF^T )  - \nabla \bv \bF \bF^T + \frac{\tau (\theta)}{2} (\bF \bF^T \bF -\bF)\bF^T=\bO.
$$
Next, we take the transpose of equation \eqref{Fev} and multiply the result from the left by $\bF$, obtaining
$$
 \bF \partial_t \bF^T + \bv \cdot (\bF \nabla \bF^T)   - \bF \bF^T (\nabla \bv)^T + \frac{\tau (\theta)}{2} \bF (\bF^T \bF \bF^T-\bF^T)=\bO.
$$
Hence, summing the above identities and using the fact that $\bF \bF^T =\bB$, we deduce \eqref{3tBurg}.

Note that the above formal procedure can be made rigorous by using the renormalisation technique developed by Di~Perna and Lions, see~\cite{DiPerna1989}. Indeed, following their approach step by step, one can show that \eqref{Fev} implies \eqref{3tBurg}, provided that $\bF\in L^4(Q_T)^{d\times d}$ and $\bv \in L^2(0,T; W^{1,2}(\Omega)^d)$ with $\diver \bv =0$, which is exactly the setting of the present paper; see also \cite{BuMaLo22,BuMaLo24}.

\subsection{Novelty and bibliographical remarks}

The system under consideration is undoubtedly one of the most studied in the context of viscoelastic fluid models, even in the constant-temperature setting, where it is already quite difficult. However, for the iconic model, the Oldroyd-B model, which corresponds to the setting with $\alpha(\theta)\equiv 1$ and $\tau(\theta)\equiv 0$, there is no global-in-time large-data result. The only global-in-time existence result is known for a simplified model in the so-called co-rotational case, see~\cite{LiMa}. Thus, the only available results for the Oldroyd-B model are obtained by adding an artificial stress diffusion term. For such models, one may establish a global-in-time theory, see e.g. \cite{barrett2011existence,debiec2025onaclass,KrPoSa15}. 

For the Giesekus model, the path towards a global-in-time theory was outlined in~\cite{masmoudi2011global}, albeit without details. Thus, the first rigorous results for Giesekus models are due to \cite{BaBuMa21}, where an additional stress diffusion term is added, while the original Giesekus model was finally treated in~\cite{BuMaLo22,BuMaLo24}; see also~\cite{BuFeMa19} for relevant compressible but very simplified models.

Finally, when switching to models with temperature dependence, not much is known. Indeed, starting with purely viscous fluids, one may follow the basic strategy outlined mainly by Feireisl (for compressible fluids) and Feireisl and M\'{a}lek (for incompressible fluids) and deduce a relatively satisfactory theory. The interested reader is referred to the following articles \cite{MR1453181,BoMu92,FeMa06,BuFeMa09,BuMaRa09} for relevant results, studies, and techniques used.

Finally, for problems involving both elasticity and temperature effects, the very first rigorous global-in-time large-data result is presented in~\cite{BuMaPrSu21}, where, however, only spherical stresses were assumed, i.e. $\mathbb{B}= b\mathbb{I}$. A significant improvement of this result towards more realistic models is developed in~\cite{bathory2024coupling}, where the authors again consider a modification of the Giesekus model by adding stress diffusion into the system and treating only a very simplified form of the internal energy, namely the case when $g(\theta)=\theta$, which then leads to many reductions in the related system. A very relevant study for our problem is \cite{BuMaSu26}, where the existence result is established for models in which the macroscopic equation for $\bB$ is replaced by its microscopic Fokker--Planck version. Finally, the only available result dealing with the Giesekus heat-conducting fluid is established in \cite{BuWo26}, where, however, only the two-dimensional case is treated and where the authors assume rather non-realistic conditions on the heat conductivity; namely, assumption \eqref{estkappa} is replaced by 
$$
K^{-1}(1+\theta)^r \le \kappa(\theta)\le K(1+\theta)^r 
$$
for some sufficiently large $r$. Hence, our main novelty here is that we are able to treat the full three-dimensional (in fact, any-dimensional) case, we do not require any additional mollification due to stress diffusion, and, most importantly, we do not need to impose any unrealistic growth assumptions on the material parameters.

\subsection{Notation, structure of the paper, and auxiliary tools}

To emphasize, we recall that throughout the whole paper we assume (and do not mention it anymore) that $\Omega\subset \R^d$ is a bounded open domain with a Lipschitz boundary, with dimension $d\ge 2$, $T\in(0,\infty)$ is the length of an arbitrary but fixed time interval, and $Q_T:=(0,T)\times \Omega$ is the space--time cylinder.

Further, in the paper we use the standard notions for Lebesgue, Sobolev and Bochner spaces. In case we want to emphasize the vector- or tensor-valued character of a function, this is indicated by the exponent, e.g. $L^p(\Omega)^d$ for vector-valued functions, and $L^{p}(\Omega)^{d\times d}$ for tensor-valued functions. To denote zero trace, we use the abbreviation $W^{1,p}_0(\Omega)$ for Sobolev spaces. Due to the special character of the requirement on the velocity field, we also define the function spaces 
$$
\begin{aligned}
W_{\vec{n}}^{1,p}&:=\{\bv \in W^{1,p}(\Omega)^d: \; \bv \cdot \bn = 0\textrm{ on } \partial \Omega\},\\
W_{\vec{n},\diver}^{1,p}&:=\{\bv \in W_{\vec{n}}^{1,p}: \; \diver \bv =0 \textrm{ in }\Omega\},\\
L^2_{\bn, \diver}&:= \overline{W_{\vec{n},\diver}^{1,2}}^{\|\, \|_{L^2}}
\end{aligned}
$$
Next, for any Banach space $X$, we denote its dual by $X^*$. Furthermore, the space of Radon measures on a set $Z$ is denoted by $\mathcal{M}(Z)$. For $F\in X^*$ and $f\in X$, we use $\langle F, f\rangle_X$ to denote duality pairing, and in the absence of ambiguity we suppress the use of the subscript $X$. Furthermore, $p':=p/(p-1)$ denotes the dual exponent.

The structure of the rest of the paper is as follows. In Section \ref{estandwss}, we derive a priori estimates of hypothetical smooth solutions to \eqref{1tBurg}--\eqref{tic}, and prove their weak sequential stability. In Section~\ref{Sec4}, we establish the existence of a solution. At the beginning of each section, we give the precise definition of each statement.

The rest of this introductory part is devoted to recalling several useful lemmas used in the paper. We start with the introduction of the rescaled entropy.
\begin{prop}
\label{otherform}
   If $\theta> 0$ and $\det \bB> 0$ in $Q_T$, and $(\bv, p, \bB,\theta)$ is a smooth solution to \eqref{1tBurg}--\eqref{3tBurg}, then \eqref{4tBurg} is equivalent to \eqref{eventr} and also to
\small\begin{align}
              &\rho\partial_t \eta_\lambda + \rho \diver (\eta_\lambda \bv)+\left(\rho\tau(\theta) |\bB-\bI|^2-2\rho\bB:\bD\right)(g'(\theta)\theta^\lambda-h_\lambda (\theta))-\diver\left(\frac{\kappa(\theta) \nabla \theta}{\theta^{1-\lambda}}\right)\nonumber\\
               & \qquad=(1-\lambda)   \frac{\kappa(\theta)|\nabla \theta|^2}{\theta^{2-\lambda}}+\frac{2\nu(\theta)|\bD|^2+ \rho \tau(\theta)g(\theta)|\bB-\bI|^2}{ \theta^{1-\lambda}}\label{evetalambda}
     \end{align} 
with 
\begin{align}
        \label{defetalambda}
        \eta_{\lambda}&:=c_v\frac{\theta^\lambda}{\lambda}-h_{\lambda}(\theta)\tilde{\psi}(\bB),\quad \lambda\in (0,1),
\end{align}
where $h_\lambda(\theta)$ denotes a primitive function to $\theta^\lambda g''(\theta)$.
\end{prop}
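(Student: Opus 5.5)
The plan is to derive \eqref{evetalambda} from the entropy identity \eqref{eventr} by a chain-rule computation along the flow. The first equivalence, between \eqref{4tBurg} and \eqref{eventr}, was already obtained formally in the derivation preceding \eqref{eventr}. That derivation uses only smoothness, $\theta>0$, $\det\bB>0$ (so that $\bB^{-1}$ and $\ln\det\bB$ make sense), \eqref{1tBurg}--\eqref{3tBurg} and the constitutive relations, and every step in it is reversible. So I only need to record that it runs in both directions and then show that \eqref{eventr} is equivalent to \eqref{evetalambda}. I work with $\alpha\equiv 0$, as agreed before the statement. For brevity write $\dot{z}:=\partial_t z+\bv\cdot\nabla z$. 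By \eqref{1tBurg}, $\partial_t z+\diver(z\bv)=\dot z$ for scalars, and likewise $\partial_t\bB+\Diver(\bB\otimes\bv)=\dot{\bB}$.

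\textbf{Step 1 (differential identity between $\eta_\lambda$ and $\eta$).} From \eqref{defentpsi} and \eqref{defetalambda}, using $h_\lambda'(\theta)=\theta^\lambda g''(\theta)$, I will compute
\[
\partial_\theta\eta=\frac{c_v}{\theta}-g''(\theta)\tilde\psi,\qquad
\partial_\theta\eta_\lambda=c_v\theta^{\lambda-1}-\theta^\lambda g''(\theta)\tilde\psi=\theta^\lambda\partial_\theta\eta .
\]
For the dependence on $\bB$ through $\tilde\psi$ the coefficients are $-g'(\theta)$ and $-h_\lambda(\theta)$, respectively. Hence, along the flow,
\[
\dot\eta_\lambda=\theta^\lambda\dot\eta+\big(g'(\theta)\theta^\lambda-h_\lambda(\theta)\big)\dot{\tilde\psi}(\bB).
\]

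\textbf{Step 2 (evolution of $\tilde\psi$).} I take the scalar product of \eqref{3tBurg} with $\partial_\bB\tilde\psi=\bI-\bB^{-1}$, exactly as in the computation before \eqref{eventr}. Using $\tr\bD=\diver\bv=0$, this gives
\[
\dot{\tilde\psi}=2\bB:\bD-\tau(\theta)|\bB-\bI|^2 .
\]
Substituting this into Step 1 produces precisely the term $\big(\rho\tau|\bB-\bI|^2-2\rho\bB:\bD\big)(g'\theta^\lambda-h_\lambda)$. It appears on the left-hand side of \eqref{evetalambda} with the correct sign once Step 1 is multiplied by $\rho$ and rearranged as $\rho\dot\eta_\lambda-\rho(g'\theta^\lambda-h_\lambda)\dot{\tilde\psi}=\rho\theta^\lambda\dot\eta$.

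\textbf{Step 3 (multiply \eqref{eventr} by $\theta^\lambda$).} The only non-trivial term is the heat flux, for which
\[
\theta^\lambda\diver\Big(\frac{\kappa\nabla\theta}{\theta}\Big)=\diver\Big(\frac{\kappa\nabla\theta}{\theta^{1-\lambda}}\Big)-\lambda\frac{\kappa|\nabla\theta|^2}{\theta^{2-\lambda}} .
\]
Combined with $\theta^\lambda\kappa|\nabla\theta|^2/\theta^2$, this yields the factor $(1-\lambda)$. The dissipative terms simply acquire the weight $\theta^{\lambda-1}$. Together with Step 2 this gives \eqref{evetalambda}.

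\textbf{Converse and main obstacle.} Since $\theta>0$, every step can be reversed: Step 2 is an identity that holds regardless, so I can subtract it, divide by $\theta^\lambda$ and undo the product rule, recovering \eqref{eventr}. The only delicate point is bookkeeping in Step 2. I must make sure the trace term vanishes by incompressibility, so that no extra $\tr\bD$ contribution survives, and that the sign convention for $h_\lambda$ (only defined up to a constant) is consistent. The identity holds for any choice of primitive, because a constant added to $h_\lambda$ contributes a constant multiple of the transported identity from Step 2. All remaining steps are routine.
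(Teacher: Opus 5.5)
Your proof is correct and is essentially the paper's computation. The paper multiplies \eqref{4tBurg} by $\theta^{\lambda-1}$, whereas you multiply \eqref{eventr} by $\theta^\lambda$ and use the observation $\partial_\theta\eta_\lambda=\theta^\lambda\partial_\theta\eta$, which also explains the choice of $h_\lambda$. You also test \eqref{3tBurg} by $\bI-\bB^{-1}$ where the paper tests \eqref{Fev} by $\bF-\bF^{-T}$; both give the same transport identity for $\tilde{\psi}(\bB)$, and yours fits the hypotheses better, since only \eqref{1tBurg}--\eqref{3tBurg} are assumed.
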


\begin{proof}
    
We have already proved the equivalence of \eqref{4tBurg} and \eqref{eventr}. Therefore, we focus on the second equivalence, namely that \eqref{4tBurg} is equivalent to \eqref{evetalambda} for arbitrary $\lambda\in (0,1)$. Multiplying \eqref{4tBurg} by $\theta^{\lambda-1}$, using \eqref{CauchtBurg}, \eqref{etBurg} and \eqref{1tBurg}, we obtain 
         \begin{equation*}
         \begin{split}
              & \frac{\rho c_v}{\lambda}(\partial_t \theta^\lambda+\nabla \theta^\lambda\cdot \bv)-\rho(\partial_t \theta+\nabla \theta\cdot \bv) h_{\lambda}'(\theta) \tilde{\psi}(\bB) \\
              & \quad +\rho\left(\partial_t \tilde{\psi} (\bB)+\nabla \tilde{\psi} (\bB)\cdot \bv\right)  (g(\theta)-\theta g'(\theta))\theta^{\lambda-1}-\diver\left(\frac{\kappa(\theta) \nabla \theta}{\theta^{1-\lambda}}\right)\\
              & \qquad =(1-\lambda)   \frac{\kappa(\theta)|\nabla \theta|^2}{\theta^{2-\lambda}}+\frac{2\nu(\theta)|\bD|^2+2\rho g(\theta)\bB:\bD}{ \theta^{1-\lambda}},
         \end{split}
     \end{equation*}
     which can be rewritten as (use also \eqref{1tBurg})
      \begin{equation*}
         \begin{split}
              & \frac{\rho c_v}{\lambda}(\partial_t \theta^\lambda+\nabla \theta^\lambda\cdot \bv)-\rho\left(\partial_t  (h_{\lambda}(\theta) \tilde{\psi}(\bB))+ \nabla(h_{\lambda}(\theta) \tilde{\psi}(\bB))\cdot \bv\right)\\ & \ +\rho\left(\partial_t \tilde{\psi} (\bB)+\nabla \tilde{\psi}(\bB)\cdot \bv\right) (h_{\lambda}(\theta)- g'(\theta)\theta^{\lambda})+\rho(\partial_t \tilde{\psi}(\bB)+\nabla \tilde{\psi}(\bB)\cdot \bv) g(\theta) \theta^{\lambda-1}\\
              & \ -2\rho g(\theta)\theta^{\lambda-1}(\bB-\bI):\bD-\diver\left(\frac{\kappa(\theta) \nabla \theta}{\theta^{1-\lambda}}\right)=(1-\lambda)  \frac{\kappa(\theta)|\nabla \theta|^2}{\theta^{2-\lambda}}+\frac{2\nu(\theta)|\bD|^2}{ \theta^{1-\lambda}}.
         \end{split}
     \end{equation*} 
     At this point, we use \eqref{Fev} multiplied scalarly by $2\rho(h_{\lambda}(\theta)- g'(\theta)\theta^{\lambda})(\bF-\bF^{-T})$, resp. by $2\rho g(\theta)\theta^{\lambda-1}(\bF-\bF^{-T})$, to treat the third, resp. the fourth and fifth term on the left-hand side, and (we use also the symmetry of $\bB$ and \eqref{1tBurg}) conclude \eqref{evetalambda}. All the arrangements were equivalent, hence \eqref{evetalambda} holds if and only if \eqref{4tBurg} holds.
\end{proof}

We continue with recalling several convergence tools. 
\begin{prop}[Chacon's biting lemma, see \cite{BaMu89}]\label{biting}
Let $Q\subset \mathbb{R}^{d+1}$ be a measurable set. Assume that a~sequence $\{f_k\}_{k\in \N}$ satisfies
\begin{equation}\label{L1bound}
\sup_{k\in \N} \|f_k\|_{L^1(Q)}<\infty.
\end{equation}
Then there exist: \emph{(i)} $f \in L^1(Q)$, \emph{(ii)} a non-decreasing family of measurable sets $\{E_j\}_{j\in \N}$, $E_j\subset Q$ for all $j\in \N$, satisfying $\lim_{j\to\infty}|Q\setminus E_j|=0$,
and \emph{(iii)} a subsequence of $\{f_k\}_{k\in \N}$, which we do not relabel, 
such that, for all $j\in \N$, the following holds:
\begin{equation}\label{bitingesta}
    f_k \rightharpoonup f \quad \textrm{weakly in } L^1(E_j).
\end{equation}
\end{prop}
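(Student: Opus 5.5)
The natural route to Chacon's biting lemma goes through the Dunford--Pettis theorem: weak relative compactness in $L^1$ is the same as equi-integrability on sets of finite measure. The plan is to find sets $E_j$, exhausting $Q$ up to null measure, on which a subsequence of $\{f_k\}$ is equi-integrable. Then we extract weakly convergent subsequences on each $E_j$ and patch them together by a diagonal argument. We may take $|Q|<\infty$; otherwise we first exhaust $Q$ by sets of finite measure and diagonalize once more. If needed we also split $f_k=f_k^+-f_k^-$ and treat each part, passing to a common subsequence.

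\textbf{Step 1 (removing concentration).} For $M>0$ put $\omega(M):=\limsup_{k}\sup\{\int_A |f_k| : |A|\le 1/M\}$. The approach is to extract a subsequence along which the ``concentration'' of $|f_k|$ is captured by small sets. Consider the truncations $T_M(f_k)$ and the remainders $|f_k|\chi_{\{|f_k|>M\}}$. For each fixed $M$ the truncations are bounded in $L^\infty$, so they are weakly$^*$ compact. The remainders have mass bounded by $\sup_k\|f_k\|_{L^1}$, and the set where they live has measure at most $C/M$ by Chebyshev.

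Using a diagonal argument over $M\in\N$, one selects a subsequence and sets $A_j$ with $|A_j|\le 2^{-j}$ such that
\[
\lim_{m\to\infty}\sup_{k}\int_{\{|f_k|>m\}\setminus A_j}|f_k| = 0 .
\]
Concretely, the standard construction uses $\mu_M:=\lim_k\int |f_k|\chi_{\{|f_k|>M\}}$, defined along a subsequence after diagonalization. This quantity is nonincreasing in $M$, so it has a limit $\mu_\infty$, which is exactly the part that concentrates. Choosing indices $k_m$ so that the mass above level $m$ is within $1/m$ of $\mu_m$, one defines the exceptional sets as unions $\bigcup_{m\ge j}\{|f_{k_m}|>m\}$, which have small measure. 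Finally set $E_j:=Q\setminus \bigcup_{i\ge j}A_i$. This family is nondecreasing and $|Q\setminus E_j|\le 2^{1-j}\to0$.

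\textbf{Step 2 (weak convergence on each $E_j$).} On $E_j$ the subsequence is equi-integrable, so Dunford--Pettis gives a further subsequence converging weakly in $L^1(E_j)$ to some $f^{(j)}$. A diagonal argument over $j$ yields one subsequence that works for all $j$. Uniqueness of weak limits gives $f^{(j+1)}|_{E_j}=f^{(j)}$, so the functions glue to a measurable $f$ on $\bigcup_j E_j$, which has full measure. Weak lower semicontinuity gives $\|f\|_{L^1(E_j)}\le\sup_k\|f_k\|_{L^1}$. Monotone convergence then gives $f\in L^1(Q)$, which establishes \eqref{bitingesta}.

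\textbf{Main obstacle.} The main difficulty is Step 1: the subsequence and the exceptional sets must be chosen so that equi-integrability holds \emph{uniformly in $k$} outside $A_j$, not just for each fixed $k$. I would handle it with the Gromov-type argument above. Pick the subsequence so that $\int|f_k|\chi_{\{|f_k|>m\}}\to\mu_m$ for every $m$; then $\mu_m\downarrow\mu_\infty$. Removing the sets $\{|f_{k}|>m_k\}$, with $m_k\to\infty$ chosen slowly, kills exactly the $\mu_\infty$ part. What remains has tails $\int_{\{|f_k|>m\}\setminus A}|f_k|\lesssim \mu_m-\mu_\infty+o(1)$, and this is small uniformly in $k$.
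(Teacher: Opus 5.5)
The paper does not prove this statement; it only cites Ball--Murat. Your overall plan is the standard argument: extract the level-set masses $\mu_m$ by a diagonal argument, bite off a small set carrying the concentrating part $\mu_\infty$, then apply Dunford--Pettis and diagonalize over $j$. Your Step 2 is fine.

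\textbf{Gap in Step 1.} The step that carries the whole lemma is the claim that the exceptional sets have small measure, and you do not justify it. The bound you have available does not give it. With levels $m=j,j+1,\dots$, Chebyshev only gives $|\{|f_{k_m}|>m\}|\le C/m$, and $\sum_{m\ge j}C/m=\infty$. So neither ``which have small measure'' nor $|A_j|\le 2^{-j}$ follows.

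Removing $\{|f_k|>m_k\}$ for \emph{every} $k$, with $m_k\to\infty$ slowly, fails outright. Take $f_k=k\chi_{I_k}$ with $|I_k|=1/k$ sweeping repeatedly through $(0,1)$; then $\bigcup_{k\ge K}\{f_k>m_k\}$ is the whole interval.

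\textbf{Fix.} Let the levels grow geometrically along a sparse subsequence. Once $\int_{\{|f_k|>m\}}|f_k|\to\mu_m$ for all $m\in\mathbb{N}$, pick $k_1<k_2<\cdots$ with $\int_{\{|f_{k_i}|>2^i\}}|f_{k_i}|\ge\mu_{2^i}-1/i\ge\mu_\infty-1/i$. Put $A_N:=\bigcup_{i\ge N}\{|f_{k_i}|>2^i\}$ and $E_N:=Q\setminus A_N$. Then $|A_N|\le C2^{1-N}$, the $E_N$ increase, and for $i\ge N$ and $m<2^i$,
\[
\int_{\{|f_{k_i}|>m\}\cap E_N}|f_{k_i}|\le\int_{\{|f_{k_i}|>m\}}|f_{k_i}|-\int_{\{|f_{k_i}|>2^i\}}|f_{k_i}|\le \mu_m-\mu_\infty+1/i+o(1)\quad (i\to\infty,\ m \text{ fixed}).
\]
The left-hand side vanishes for $m\ge 2^i$. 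The finitely many remaining indices are handled by the integrability of each single $f_{k_i}$. This gives exactly the uniform integrability on $E_N$ that your Step 2 needs.

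\textbf{Infinite measure.} Your reduction of the case $|Q|=\infty$ does not work, and the statement is in fact false there: mass escaping to infinity cannot be bitten off by sets of small measure. Take $Q=\mathbb{R}^{d+1}$ and $f_k:=\chi_{ke_1+(0,1)^{d+1}}$, and suppose $f_k\rightharpoonup f$ weakly in $L^1(E)$ with $|Q\setminus E|<1$. Testing with $\chi_{E\cap B_R}$ for every $R$ forces $f=0$ on $E$. But $\int_E f_k\ge 1-|Q\setminus E|>0$, a contradiction. Diagonalizing over an exhaustion by finite-measure sets $Q_n$ only yields weak convergence in $L^1(E_j\cap Q_n)$ for each $n$, not in $L^1(E_j)$. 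So you should assume $|Q|<\infty$. This is also what Dunford--Pettis needs without a tightness condition, and it covers the paper's use with $Q=Q_T$.
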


In what follows, whenever we write
$$
f_k \rightharpoonup f \textrm{ in the biting sense in }L^1(Q),
$$
this means that, for the sequence $\{f_k\}_{k\in \N}$ satisfying \eqref{L1bound}, we apply Chacon's biting lemma (Proposition~\ref{biting}) and obtain the corresponding $f\in L^1(Q)$ and a sequence of sets $\{E_j\}_{j\in \N}$ with the properties described above (together with a suitable subsequence of $\{f_k\}_{k\in \N}$, not relabeled) such that \eqref{bitingesta} holds. Moreover, it follows from the characterization of $L^1$-weakly convergent sequences that $\{f_k\}_{k\in \N}$ is uniformly equi-integrable on $E_j$ for any $j\in \N$. This means that, for any $\varepsilon >0$ and any $j\in \N$, there exists $\delta>0$ such that, for all measurable sets $U\subset E_j$ satisfying $|U|\le \delta$, there holds
\begin{equation}
\label{unifL1}
\sup_{k\in \N} \int_{U}|f_k| \le \varepsilon.
\end{equation}
To be precise, the uniform bound~\eqref{L1bound} implies, for a suitable subsequence of $\{f_k\}_{k\in \N}$ (not relabeled), the validity of estimate~\eqref{unifL1}, which, for a further (not relabeled) subsequence of $\{f_k\}_{k\in \N}$, implies the convergence result~\eqref{bitingesta}. On the other hand, the convergence result~\eqref{bitingesta} implies the validity of \eqref{unifL1}.

The next convergence result is an application of Proposition~\ref{biting} to a Stokes-like system.
\begin{prop}\label{bitingT}
Let 
$\{\tilde{\bw}_k\}_{k\in \N}$ and $\{\tilde{\bG}_k\}_{k\in \N}$ be sequences satisfying
\begin{align}\label{bitingC}
\begin{aligned}
\tilde{\bw}_k &\rightharpoonup \tilde{\bw} &&\textrm{weakly in }L^2(0,T;W_{\vec{n},\diver}^{1,2}),\\
\partial_t \tilde{\bw}_k &\rightharpoonup \partial_t\tilde{\bw} &&\textrm{weakly in }L^2(0,T;(W_{\vec{n},\diver}^{1,2})^*),\\
\tilde{\bG}_k &\rightharpoonup \tilde{\bG} &&\textrm{weakly in }(L^{2}(Q_T))^{d\times d}
\end{aligned}
\end{align}
and fulfilling, for a.a. $t\in (0,T)$ and all $\varphi \in W_{\vec{0},\diver}^{1,2}$, the following identity
\begin{equation}\label{formB}
\langle \partial_t \tilde{\bw}_k, \vec{\varphi}\rangle + \int_{\Omega} \tilde{\bG}_k : \nabla \vec{\varphi} =0.
\end{equation}
Then there holds
\begin{align}\label{Br}
\tilde{\bG}_k : \nabla \tilde{\bw}_k \rightharpoonup \tilde{\bG} : \nabla \tilde{\bw} \qquad \textrm{in the biting sense in } L^1(Q_T).
\end{align}
\end{prop}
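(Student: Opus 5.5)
The plan is to reduce \eqref{Br} to showing that the defect term vanishes in the biting sense, and to prove that with a solenoidal parabolic Lipschitz truncation; the hardest step is turning the truncation estimate into biting convergence. Set $\bu_k:=\tilde{\bw}_k-\tilde{\bw}$ and $\bG_k:=\tilde{\bG}_k-\tilde{\bG}$. Then
\[
\tilde{\bG}_k:\nabla\tilde{\bw}_k=\bG_k:\nabla\bu_k+\tilde{\bG}:\nabla\tilde{\bw}_k+\tilde{\bG}_k:\nabla\tilde{\bw}-\tilde{\bG}:\nabla\tilde{\bw}.
\]
The second and third terms are a fixed $L^2$ function times an $L^2$-weakly convergent sequence. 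Hence they converge weakly in $L^1(Q_T)$, and in particular in the biting sense. So it suffices to show
\[
\bG_k:\nabla\bu_k\rightharpoonup 0 \quad\text{in the biting sense in } L^1(Q_T).
\]
By Proposition~\ref{biting}, after passing to a subsequence this sequence has a biting limit $f\in L^1$. The goal is to show $f=0$. Because the biting limit is local, it is enough to prove $\int_{E_j}f\,\zeta=0$ for every $\zeta\in C_c^\infty(Q_T)$ and every $j$.

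\emph{Preliminary facts.} Subtracting the limit identity from \eqref{formB} shows that $(\bu_k,\bG_k)$ solves
\[
\langle\partial_t\bu_k,\varphi\rangle+\int_\Omega\bG_k:\nabla\varphi=0\quad\text{for all }\varphi\in W^{1,2}_{\vec{0},\diver}.
\]
By the Aubin--Lions lemma and \eqref{bitingC}, $\bu_k\to 0$ strongly in $L^2(Q_T)$, while $\bu_k\rightharpoonup0$ weakly in $L^2(0,T;W^{1,2})$.

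\emph{Truncation step.} Fix a parabolic cylinder $Q'\Subset Q_T$ and a cut-off $\zeta$ supported in $Q'$. I would apply the solenoidal parabolic Lipschitz truncation (in the spirit of Breit--Diening--Schwarzacher). For each level $\lambda$ it produces $\bu_{k,\lambda}$ with the following properties:
\begin{itemize}
\item $\bu_{k,\lambda}$ is divergence free and has $\|\nabla\bu_{k,\lambda}\|_\infty\le C\lambda$;
\item $\bu_{k,\lambda}=\bu_k$ outside a bad set $B_{k,\lambda}$ with $\lambda^2|B_{k,\lambda}|\le C\int_{\{|\nabla\bu_k|+|\bG_k|>\lambda\}}(|\nabla\bu_k|^2+|\bG_k|^2)+o_k(1)$;
\item $\bu_{k,\lambda}\to0$ strongly in $L^\infty$ as $k\to\infty$, for fixed $\lambda$;
\item it satisfies the usual bound for $\langle\partial_t\bu_k,\bu_{k,\lambda}\zeta\rangle$ restricted to the bad set.
\end{itemize}
The divergence-free structure is what lets us test the pressure-free equation without recovering a pressure.

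Testing with $\bu_{k,\lambda}\zeta$ and letting $k\to\infty$, the strong convergence kills all lower-order terms. The outcome I aim for is
\[
\limsup_{k\to\infty}\Bigl|\int_{Q'\setminus B_{k,\lambda}}\bG_k:\nabla\bu_k\,\zeta\Bigr|\le C\lambda\,\limsup_{k\to\infty}\int_{B_{k,\lambda}}|\bG_k|\le C\Bigl(\limsup_{k\to\infty}\lambda^2|B_{k,\lambda}|\Bigr)^{1/2}.
\]
Here the last inequality uses H\"older together with the uniform $L^2$ bound on $\bG_k$.

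\emph{Main obstacle: passing from this estimate to biting convergence.} The bad sets depend on $k$, and the quantity $\lambda^2|B_{k,\lambda}|$ is only bounded, not small. I would handle this with the standard layering argument. Pick levels $\lambda_m=2^{2^m}$ for $m=1,\dots,M$. Since $\sum_m\int_{\{\lambda_m<|\cdot|\le\lambda_{m+1}\}}(|\nabla\bu_k|^2+|\bG_k|^2)\le C$, some $m$ has layer energy at most $C/M$. This makes the right-hand side above $O(M^{-1/2})$ for that choice of $\lambda=\lambda_m$.

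Next, restrict to a biting set $E_j$ on which $\bG_k:\nabla\bu_k$ is equi-integrable, in the sense of \eqref{unifL1}. There the contribution of $B_{k,\lambda}\cap E_j$ is small, since $|B_{k,\lambda}|\le C\lambda^{-2}\to0$. Therefore
\[
\Bigl|\int_{E_j}f\,\zeta\Bigr|\le CM^{-1/2}+\varepsilon.
\]
Letting $M\to\infty$ and then $\varepsilon\to0$ gives $\int_{E_j}f\,\zeta=0$.

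Finally, a diagonal argument over countably many $\zeta$ and over a cover of $Q_T$ by cylinders $Q'$ gives $f=0$ a.e. This proves \eqref{Br}. Making the equi-integrability bookkeeping on $E_j$ compatible with the $k$-dependent bad sets is the step I expect to need the most care.
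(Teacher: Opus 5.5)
The paper gives no proof of its own here; it only refers to Theorem~1.7 of \cite{BuMaLo24}. Your plan is the natural one for such a statement: reduce to $\bG_k:\nabla\bu_k$, test with a solenoidal parabolic Lipschitz truncation, pigeonhole over doubly exponential levels, and combine with Chacon's lemma. However, the concluding inference $|\int_{E_j}f\zeta|\le CM^{-1/2}+\varepsilon$ does not follow from what you derived. Your truncation estimate controls $\int_{Q'\setminus B_{k,\lambda}}\bG_k:\nabla\bu_k\,\zeta$, but you need $\int_{E_j\setminus B_{k,\lambda}}\bG_k:\nabla\bu_k\,\zeta$. Since the integrand has no sign, you cannot shrink the domain. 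The remainder $\int_{(Q'\setminus E_j)\setminus B_{k,\lambda}}\bG_k:\nabla\bu_k\,\zeta$ is not controlled by equi-integrability, because it lives off $E_j$. Nor is it controlled by the smallness of $B_{k,\lambda}$, because it lives on the good set. Its crude bound $C\lambda\|\zeta\|_\infty|Q_T\setminus E_j|^{1/2}$ is circular: the bad part on $E_j$ needs $\lambda$ large relative to $j$, while this term needs $j$ large relative to $\lambda$. The smallness of $B_{k,\lambda}\cap E_j$, which is what you argue, was never the difficulty.

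The missing idea is to identify the weak limit of the good-set products on the whole cylinder.
First fix the level. The pigeonholed index depends on $k$ but takes only $M$ values, so pass to a subsequence on which it is constant. Start the levels high enough that $\sup_k|B_{k,\lambda}|$ is below the equi-integrability threshold of $E_j$. For the pigeonhole to give $C/M$ you also need the weak-type form $\lambda^2|B_{k,\lambda}|\lesssim\lambda\int_{\{h_k>\lambda/2\}}h_k$ with $h_k:=|\nabla\bu_k|+|\bG_k|$, not the full tail energy.
Next, since $\nabla\bu_k=\nabla\bu_{k,\lambda}$ a.e.\ on the good set, $g_k:=\bG_k:\nabla\bu_k\,\chi_{Q'\setminus B_{k,\lambda}}$ satisfies $|g_k|\le C\lambda|\bG_k|$. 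So $g_k\rightharpoonup g^\lambda$ weakly in $L^2(Q_T)$ along a further subsequence.
In the tested identity, every term containing $\nabla\zeta$ or $\partial_t\zeta$ carries a factor $\bu_{k,\lambda}$ or $\bu_k$ and vanishes as $k\to\infty$. The limit estimate is therefore $|\int g^\lambda\zeta|\le C\|\zeta\|_\infty M^{-1/2}$, which by duality gives $\|g^\lambda\|_{L^1(Q')}\le CM^{-1/2}$.
Then on $E_j\cap Q'$ you have $f=g^\lambda+r^\lambda$, where $r^\lambda$ is the weak $L^1(E_j\cap Q')$-limit of $\bG_k:\nabla\bu_k\,\chi_{B_{k,\lambda}}$, so $\|r^\lambda\|_{L^1(E_j\cap Q')}\le\varepsilon$. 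Letting $M\to\infty$ and $\varepsilon\to 0$ gives $f=0$.

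This argument needs bad sets independent of $\zeta$ and test functions admissible in \eqref{formB}. Your test function $\bu_{k,\lambda}\zeta$ is not admissible: $\diver(\bu_{k,\lambda}\zeta)=\bu_{k,\lambda}\cdot\nabla\zeta\neq 0$, and the equation carries no pressure. The cut-off has to enter the solenoidal construction itself. For example, test with $\mathrm{curl}(\zeta\,\vec{a}_{k,\lambda})$, where $\vec{a}_{k,\lambda}$ is a second-order truncation of a local vector potential of $\bu_k$. On the good set this equals $\zeta\bu_k$ up to a strongly vanishing term.
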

\begin{proof}
    See \cite{BuMaLo24}, Theorem 1.7 and Remark at the end of Section 2. 
\end{proof}
Finally, the tool that will play the crucial role in the identification of the weak limit is the following.
\begin{prop}\label{div-curl}\textup{\textbf{(Div-curl lemma, \cite{conti2011thedivcurl})}}
    Let $O\subset\R^n$ be an open and bounded domain with a Lipschitz boundary and let $p,q\in (1+\infty)$ with $\frac{1}{p} + \frac{1}{q} = 1$. Suppose that $\{\bu_k\}_{k\in \N}\subset (L^p(O))^n$, $\{\bv_k\}_{k\in \N}\subset (L^q(O))^n$ are sequences satisfying
    $$
    \bu_k \rightharpoonup \bu\text{ weakly in }(L^p(O))^n,\quad \bv_k\rightharpoonup \bv\text{ weakly in }(L^q(O))^n,
    $$
    and
    $$
    \{\bu_k\cdot \bv_k\}_{k\in \N} \text{ is uniformly equiintegrable in }Q_T.
    $$
    Finally, assume that 
    \begin{align*}
    \diver \bu_k &\rightarrow \diver \bu \text{ strongly in }(W^{1,\infty}_0(O))^*,\\
    \mathrm{curl}\,\bv_k &\rightarrow \mathrm{curl}
    \,\bv \text{ strongly in }((W^{1,\infty}_0(O))^{n \choose 2})^*. 
    \end{align*}
    Then,
    $$
    \bu_k\cdot \bv_k\rightharpoonup \bu\cdot \bv\text{ weakly in }L^1(O).
    $$
\end{prop}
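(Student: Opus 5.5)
The div--curl lemma stated last (Proposition~\ref{div-curl}) is the result to prove. Since it is quoted from \cite{conti2011thedivcurl}, the plan is to reduce it to the classical Murat--Tartar compensated compactness argument. The equi-integrability hypothesis is what replaces the usual $L^p$--$L^{p'}$ strong compactness of the products.

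\textbf{Step 1: distributional convergence.} Fix $\varphi\in C_c^\infty(O)$. The goal is
\[
\int_O \varphi\,\bu_k\cdot\bv_k \to \int_O\varphi\,\bu\cdot\bv .
\]
Working locally, I would Hodge-decompose $\bv_k$ on a ball $B\supset\operatorname{supp}\varphi$ as $\bv_k=\nabla \phi_k+\vec{z}_k$. Here $\phi_k$ solves $\Delta\phi_k=\diver\bv_k$, and $\vec{z}_k$ is obtained from $\mathrm{curl}\,\bv_k$ through a Biot--Savart-type operator. By Calder\'on--Zygmund theory, $\nabla\phi_k$ and $\vec{z}_k$ are bounded in $L^q$.

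The strong convergence of $\mathrm{curl}\,\bv_k$ in $(W^{1,\infty}_0)^*$ is meant to make $\vec{z}_k$ compact in a weak norm. The test $\int\varphi\,\bu_k\cdot\nabla\phi_k$ is then integrated by parts as
\[
-\langle \diver \bu_k,\varphi\phi_k\rangle-\int \phi_k\,\bu_k\cdot\nabla\varphi .
\]
The second term converges because $\phi_k\to\phi$ strongly in $L^q$ by Rellich. The first term is a pairing of a strongly convergent sequence in $(W^{1,\infty}_0)^*$ with a sequence that is only bounded in $W^{1,q}$, not in $W^{1,\infty}$.

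\textbf{Main obstacle: the mismatch of the weak topologies.} The hypotheses give strong convergence only in the dual of $W^{1,\infty}_0$, which is far weaker than the classical $W^{-1,p}$ setting. I would handle it by truncation, following the Lipschitz-truncation philosophy.

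First I would approximate $\phi_k$ by Lipschitz functions $\phi_k^\lambda$ that agree with $\phi_k$ outside a set $\{M(\nabla\phi_k)>\lambda\}$ and satisfy $\|\nabla\phi_k^\lambda\|_\infty\le C\lambda$. Here $M$ is the maximal function, and the measure of the bad set is at most $C\lambda^{-q}$ uniformly in $k$. For fixed $\lambda$ the pairing $\langle\diver\bu_k,\varphi\phi_k^\lambda\rangle$ converges, since $\phi_k^\lambda$ is bounded in $W^{1,\infty}$ and converges uniformly along a subsequence by Arzel\`a--Ascoli.

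The error from replacing $\nabla\phi_k$ by $\nabla\phi_k^\lambda$ is $\int_{\text{bad set}}\varphi\,\bu_k\cdot(\nabla\phi_k-\nabla\phi_k^\lambda)$. I expect this to be the delicate point: bounding it via the equi-integrability of $\bu_k\cdot\bv_k$ and the smallness of the bad set. The term involving $\bu_k\cdot\nabla\phi_k^\lambda$ on the bad set is controlled by H\"older with $\|\nabla\phi_k^\lambda\|_\infty\le C\lambda$ against $|\text{bad}|^{1/p'}$. Making it small requires choosing $\lambda$ with the right dependence on the equi-integrability modulus, in the spirit of Conti--Dolzmann--M\"uller. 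The curl part $\vec{z}_k$ is treated symmetrically, swapping the roles of the two sequences.

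\textbf{Step 2: upgrade to weak $L^1$.} From Step 1 we have $\bu_k\cdot\bv_k\to\bu\cdot\bv$ in distributions. The products are bounded in $L^1$ and equi-integrable, so by Dunford--Pettis they are weakly precompact in $L^1(O)$. Every weak limit must coincide with the distributional limit $\bu\cdot\bv$, hence the whole sequence converges weakly in $L^1(O)$.
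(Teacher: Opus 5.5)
The paper does not prove this proposition; it only quotes \cite{conti2011thedivcurl}, whose argument rests on Lipschitz truncation. Your framework is therefore the right kind: localise, Hodge-decompose, truncate the potential, then upgrade to weak $L^1$ by Dunford--Pettis. Your Step~2 is fine. But the step you flag as delicate is where the whole proof lives, and the mechanism you suggest for it cannot work.

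For a fixed level $\lambda$, H\"older's inequality and the weak-type bound for the maximal function give $\bigl|\int_{\{M(\nabla\phi_k)>\lambda\}}\varphi\,\bu_k\cdot\nabla\phi_k^\lambda\bigr|\le C\lambda\,|\{M(\nabla\phi_k)>\lambda\}|^{1/q}\|\bu_k\|_p\le C$. This is bounded but not small, whatever $\lambda$ is. Tying $\lambda$ to the equi-integrability modulus of $\bu_k\cdot\bv_k$ cannot help, because this term does not contain that product. For fixed $\lambda$ it would be small only if $|\nabla\phi_k|^q$ were equi-integrable, which is exactly what the lemma avoids assuming.

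The missing idea is to let the level depend on $k$. Since $\sum_{j=J}^{2J}2^{jq}|\{M(\nabla\phi_k)>2^j\}|\le C\|\nabla\phi_k\|_q^q\le C$, for every $k$ there is $\lambda_k=2^{j_k}\in[2^J,2^{2J}]$ with $\lambda_k^q|\{M(\nabla\phi_k)>\lambda_k\}|\le C/J$. The bad-set term is then $O(J^{-1/q})$. Meanwhile $\|\varphi\phi_k^{\lambda_k}\|_{W^{1,\infty}}\le C2^{2J}$ stays bounded in $k$, so the pairing with $\diver\bu_k$ still converges.

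A second gap is that equi-integrability is assumed only for the full product. Your error term $\int_{\mathrm{bad}}\varphi\,\bu_k\cdot\nabla\phi_k$ is not controlled by it: $\bu_k\cdot\nabla\phi_k=\bu_k\cdot\bv_k-\bu_k\cdot\vec z_k$, and nothing controls $\bu_k\cdot\vec z_k$ on small sets. For the same reason the curl part cannot be handled by swapping roles. You have no information on $\mathrm{curl}\,\bu_k$ or $\diver\bv_k$. Compactness of $\vec z_k$ in a weak norm does not let you pass to the limit in $\int\varphi\,\bu_k\cdot\vec z_k$ against a merely weakly convergent $\bu_k$.

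One organisation that works:
\begin{enumerate}
\item Reduce to $\bu=\bv=\vec 0$. The products $(\bu_k-\bu)\cdot(\bv_k-\bv)$ stay equi-integrable because $|\bu|^p,|\bv|^q\in L^1$.
\item Interpolate the $(W^{1,\infty}_0)^*$ convergence of $\mathrm{curl}\,\bv_k$ with its $W^{-1,q}$ bound. This gives $\vec z_k\to\vec 0$ in $L^r$ for $r<q$, hence in measure.
\item Remove $B_k=\{M(\nabla\phi_k)>\lambda_k\}$ and $A_k=\{|\vec z_k|>N\}$ from the full product, and estimate the pieces as follows.
\begin{itemize}
\item $\int_{B_k\cup A_k}\varphi\,\bu_k\cdot\bv_k$ is small by equi-integrability.
\item $\int_{B_k}\varphi\,\bu_k\cdot\nabla\phi_k^{\lambda_k}=O(J^{-1/q})$.
\item $\int_{A_k\setminus B_k}\varphi\,\bu_k\cdot\nabla\phi_k^{\lambda_k}\le C2^{2J}/N$.
\item $\int_{A_k^c\cap B_k^c}\varphi\,\bu_k\cdot\vec z_k\to0$, because $\vec z_k\mathbf 1_{A_k^c}\to\vec 0$ strongly in $L^q$.
\item $\int\varphi\,\bu_k\cdot\nabla\phi_k^{\lambda_k}\to0$ by the hypothesis on $\diver\bu_k$.
\end{itemize}
\item Let $k\to\infty$, then $N\to\infty$, then $J\to\infty$.
\end{enumerate}
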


To conclude this part, it is essential to mention that the paper relies not only on the results mentioned above, but also on the following two classical techniques. First, we mention the theory of renormalisation for the transport equation developed in~\cite{DiPerna1989}, which is used when dealing with the equation for $\mathbb{B}$. Second, the compensated compactness method for heat-conducting fluids, introduced by Feireisl, see~\cite{Fe04}, and further generalised and extended to other settings, see~\cite{Feireisl2017,FeNo22} and~\cite{BuJuPoZa22}. These methods are used to deduce compactness of the temperature~$\theta$ and the tensor~$\mathbb{B}$.

\section{A priori estimates and weak sequential stability}\label{estandwss}
This section is devoted to the derivation of a priori estimates and to the proof of weak sequential stability. To simplify the notation and the presentation, we provide only the three-dimensional version of the theorem, but the corresponding result holds in any dimension with only minor modifications in the proof.
\begin{thm}\label{THM1}
   Let the material parameters satisfy \eqref{estnu}--\eqref{growthg01}. Assume that for every $k\in \mathbb{N}$ the smooth functions $\{\bv_k, p_k, \theta_k, \bF_k\}$ solve \eqref{1tBurg}--\eqref{tBurgbc} and \eqref{Fev}. Assume that $\det \bF_k >0$ in $Q_T$. Finally, assume that 
\begin{equation}\label{initCCC}
    \begin{aligned}
       \bv_k(0)&\to \bv_0 &&\textrm{strongly in }L^{2}_{\bn, \diver},\\
       \bF_k(0)& \to \bF_0 &&\textrm{strongly in } L^2(\Omega)^{d\times d},\\
       \theta_k(0)&\to \theta_0 &&\textrm{strongly in }L^1(\Omega),\\
       e(\theta_k(0),\bB_k(0))&\to e(\theta_0,\bB_0) &&\textrm{strongly in }L^1(\Omega),\\
       \eta(\theta_k(0),\bB_k(0))&\to \eta(\theta_0,\bB_0) &&\textrm{strongly in }L^1(\Omega).
    \end{aligned}
\end{equation}
Then there exists a subsequence (not relabeled) such that 
\small
\begin{align}
    \bv_k& \rightharpoonup^* \bv && \mbox{weakly-* in } L^\infty(0,T; L^2_{\bn, \diver})\cap L^2(0,T; W_{\bn, \diver}^{1,2}),\label{vkweakT}\\
    \bF_k& \rightharpoonup^* \bF && \mbox{weakly-* in } L^\infty(0,T; (L^2(\Omega))^{3\times3})\cap L^4(Q_T)^{3\times 3},\label{FkweakT}\\
    \theta_k& \rightharpoonup^* \theta  && \mbox{weakly-* in } L^\infty(0,T; L^1(\Omega))\cap L^{\frac{5}{4}}(0,T; W^{1, \frac{5}{4}} (\Omega))\cap L^{\frac{5}{3}}(Q_T), \\
    \partial_t \bv_k & \rightharpoonup \partial_t \bv && \mbox{weakly in }L^{\frac{4}{3}}(0,T; (W_{\vec{n}, \diver}^{1,2})^*),\label{dervkweakT}\\
    \partial_t \bF_k & \rightharpoonup \partial_t \bF && \mbox{weakly in }L^{\frac{4}{3}}(0,T; ((W^{1,2}(\Omega))^{3\times3})^*).
\end{align}
Moreover, for all $\bw\in L^{\infty}(0,T; W^{1,\infty}_{\bn,\diver})$ and all $\bA \in L^{4}(Q_T)^{3\times 3}$, the following identities hold:
\small
\begin{align}
        \label{vincompT}
        \int_0^T\langle \partial_t \bv, \bw\rangle 
        &= \int_{Q_T} (\bv\otimes \bv):\nabla \bw
        - \int_{Q_T} (2\nu(\theta)\bD+g(\theta)\bF \bF^T):\nabla \bw
        - \tau_*\int_{\Sigma_T}\bv_\tau \cdot \bw_{\tau},\\
        \label{FincompT}
        \int_0^T\langle \partial_t \bF, \bA\rangle 
        &= \int_{Q_T} (\bF\otimes \bv):\nabla \bA
        +\int_{Q_T} \left(\nabla \bv \bF-\frac{\tau(\theta)}{2}(\bF \bF^T \bF-\bF)\right):\bA,
\end{align}
Finally, the entropy inequality and the total energy inequality are satisfied in the following sense: for all nonnegative $\varphi \in \mathcal{C}^1_0([0,T); \mathcal{C}^1(\Omega))$ and all nonnegative $\xi\in \mathcal{C}^1_0([0,T))$, we have
\begin{equation}\label{dfneq}
\begin{aligned}
\int_{Q_T}\eta(\theta,\bB) \partial_t \varphi 
&\ge \int_{\Omega} \eta(\theta_0,\bB_0)\varphi(0)   
+\int_{Q_T} \eta \bv\cdot \nabla \varphi 
-\int_{Q_T}\frac{\kappa(\theta) \nabla \theta}{\theta} \cdot \nabla \varphi\\
&\quad +\int_{Q_T} \varphi \left( \frac{\kappa(\theta)|\nabla \theta|^2}{\theta^2}
+\frac{2\nu(\theta)|\bD|^2+\rho\tau(\theta)  g(\theta)|\bB-\bI|^2}{ \theta} \right).
\end{aligned}
\end{equation}
and
\begin{equation}\label{dfnEq}
\begin{aligned}
\int_0^T \xi' \left(\int_{\Omega} \frac{|\bv|^2}{2} + e(\theta,\bB)\right) 
+ \int_0^T \xi \left(\gamma_*\int_{\partial \Omega} |\bv|^2\right) 
\le \xi(0)\int_{\Omega} \frac{|\bv_0|^2}{2} + e(\theta_0,\bB_0).
\end{aligned}
\end{equation}
The initial conditions are attained in the following sense:
\begin{equation}
    \lim_{t\to 0_+} \left(\|\bv(t)-\bv_0\|_2 + \|\theta(t)-\theta_0\|_1 + \|\bF(t)-\bF_0\|_2 \right)=0.\label{aatt}
\end{equation}

\end{thm}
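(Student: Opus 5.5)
The proof splits into four stages: a priori estimates, compactness of the velocity, compactness of the temperature, and compactness of $\bF_k$.

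\textbf{A priori estimates.} Integrating \eqref{evE} over $\Omega$ with \eqref{tBurgbc} gives \eqref{evEO}. Since $e\ge c_v\theta$ (because $g-\theta g'\ge 0$ by concavity and $g\ge0$) and $\tilde\psi\ge 0$, this yields uniform bounds of $\bv_k$ in $L^\infty(0,T;L^2)$ and of $\theta_k$ in $L^\infty(0,T;L^1)$. This bound does not control $\tilde\psi(\bB_k)$, because $g-\theta g'$ may degenerate.

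Next I integrate \eqref{eventr} and use that $-g'(\theta)\tilde\psi\le 0$, $\ln\theta\le\theta$, and that $\eta$ is bounded above by the energy. This controls the dissipation in $L^1(Q_T)$: $\kappa|\nabla\theta_k|^2/\theta_k^2$, $\nu|\bD_k|^2/\theta_k$ and $\tau g|\bB_k-\bI|^2/\theta_k$. The factor $1/\theta$ is unpleasant, so instead I use the rescaled entropy \eqref{evetalambda} of Proposition~\ref{otherform}, integrated in space–time. For $\lambda\in(0,1)$, $\eta_\lambda\lesssim 1+\theta^\lambda$ is controlled by the energy. The term $(g'\theta^\lambda-h_\lambda)(\tau|\bB-\bI|^2-2\bB:\bD)$ has to be absorbed using \eqref{growthg01} and $g''\le0$ (so $h_\lambda\le 0$ up to a constant).

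This yields $\int\theta^{\lambda-2}|\nabla\theta|^2\le C$. Interpolating with $L^\infty L^1$ in the Feireisl–Málek manner gives $\theta_k\in L^{5/3}$ and $\nabla\theta_k\in L^{5/4}$. For $\bv_k$ and $\bF_k$, I take the energy of the Giesekus part: test \eqref{Fev} by $\bF_k$ and \eqref{2tBurg} by $\bv_k$. The cross terms $g\bB:\nabla\bv$ cancel only with $g$ constant, so I combine this with the entropy. Concretely, $\int g(\theta)\tilde\psi$ is controlled via the entropy bound $\int g'\tilde\psi\le C$ together with the energy. This gives $\bv_k\in L^2W^{1,2}$ and $\bF_k\in L^\infty L^2\cap L^4$ (the latter from the term $\tau|\bB|^2$). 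The time derivative bounds follow from the equations, giving \eqref{vkweakT}–\eqref{dervkweakT}.

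\textbf{Compactness of $\bv_k$.} Aubin–Lions gives strong convergence of $\bv_k$ in $L^2(Q_T)$.

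\textbf{Compactness of $\theta_k$.} I use the Feireisl compensated compactness argument with Proposition~\ref{div-curl} in $(t,x)$. Take $\bu_k=(\eta_\lambda,\eta_\lambda\bv_k-\kappa\nabla\theta_k/\theta_k^{1-\lambda})$; its space–time divergence is a bounded measure, hence compact in $(W^{1,\infty}_0)^*$. Take $\bv_k=(G(\theta_k),0,0,0)$ with $G$ bounded and Lipschitz; its curl is compact by the gradient bound. This gives $\overline{\eta_\lambda G(\theta)}=\overline{\eta_\lambda}\,\overline{G(\theta)}$. Since $c_v\theta^\lambda/\lambda$ is strictly monotone, this should give a.e. convergence of $\theta_k$, provided the $\tilde\psi(\bB_k)$ part is handled. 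That part is the coupling with $\bF$.

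\textbf{Compactness of $\bF_k$ (main obstacle).} Following \cite{BuMaLo24}, I pass to the limit to obtain an equation for $\bF$, then renormalise à la DiPerna–Lions to compare $\overline{|\bF|^2}$ with $|\bF|^2$. This requires identifying $\overline{\nabla\bv\bF:\bF}$. For this I use Proposition~\ref{bitingT} applied to the momentum equation, which gives the weak limit of $\bG_k:\nabla\bv_k$ in the biting sense, with $\bG_k$ containing $g(\theta_k)\bB_k$. The Giesekus term $\tau(\bF\bF^T\bF-\bF):\bF$ is monotone and gives a Gronwall-type inequality for the defect $\overline{|\bF|^2}-|\bF|^2$.

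The real difficulty is the temperature dependence of $g(\theta)$ and $\tau(\theta)$ inside these products. I would run the $\theta$ and $\bF$ compactness arguments jointly: first obtain compactness of $\theta_k$ on the biting sets $E_j$ using a renormalised div–curl with cut-offs, then use it in the defect inequality. Once $\theta_k\to\theta$ and $\bF_k\to\bF$ a.e., Vitali's theorem identifies all nonlinearities in \eqref{vincompT}–\eqref{FincompT}.

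\textbf{Passage to the limit in the inequalities.} The entropy inequality \eqref{dfneq} follows by weak lower semicontinuity of the convex dissipation terms. The energy inequality \eqref{dfnEq} follows from a.e. convergence and Fatou's lemma. The attainment of initial data \eqref{aatt} follows from weak continuity in time combined with the energy and entropy inequalities at $t\to0_+$ and the strong convergence in \eqref{initCCC}.
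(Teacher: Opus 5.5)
Your temperature step is circular as set up. You defer the $\tilde\psi(\bB_k)$ part of the (rescaled) entropy to the compactness of $\bF_k$. But the $\bF_k$ argument itself needs $\theta_k\to\theta$ a.e. After Proposition~\ref{bitingT} (applied to the Stokes part $\bv_k^1$ of the velocity) one only knows $\lim_k\int_{E_j}(\nabla\bv_k^1-\nabla\bv^1):g(\theta_k)\bB_k\psi=-\lim_k\int_{E_j}|\bD_k^1-\bD^1|^2\psi$. Removing the factor $g(\theta_k)$ requires $1/g(\theta_k)\to1/g(\theta)$ uniformly on Egoroff sets, which uses a.e.\ convergence of $\theta_k$, $\theta>0$ a.e.\ and $\ln g(\theta)\in L^1$. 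The ``renormalised div--curl on the biting sets $E_j$'' you propose does not address this. On $E_j$ the $\bB_k$ still converge only weakly, so nothing relates $\overline{h_\lambda(\theta)\tilde\psi(\bB)G(\theta)}$ to $\overline{h_\lambda(\theta)\tilde\psi(\bB)}\,\overline{G(\theta)}$.

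The missing idea is that $\theta_k$ is compact independently of $\bF_k$. The paper applies the div--curl lemma a second time, to the transport equation $\partial_t\tilde\psi(\bB_k)+\diver(\tilde\psi(\bB_k)\bv_k)+\tau|\bB_k-\bI|^2=2(\bB_k-\bI):\bD_k$ paired with $(\theta_k^{1/5},0,0,0)$. This gives $\overline{\tilde\psi(\bB)\theta^{1/5}}=\overline{\tilde\psi(\bB)}\,\overline{\theta^{1/5}}$. It then exploits $g''\le0$ and $\tilde\psi\ge0$ through the pointwise inequality $\tilde\psi(\bB_k)\bigl(g'(w)-g'(\theta_k)\bigr)\bigl(\theta_k^{1/5}-w^{1/5}\bigr)\ge0$ with $w^{1/5}=\overline{\theta^{1/5}}$. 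This yields $\int\overline{g'(\theta)\tilde\psi(\bB)\theta^{1/5}}\le\int\overline{g'(\theta)\tilde\psi(\bB)}\,\overline{\theta^{1/5}}$. Combined with $\overline{\eta\,\theta^{1/5}}=\overline{\eta}\,\overline{\theta^{1/5}}$, one gets $\int\overline{\ln\theta}\,\overline{\theta^{1/5}}\ge\int\overline{\ln\theta\,\theta^{1/5}}$, and a Minty-type argument gives $\theta_k\to\theta$ a.e. The same works with your $\eta_\lambda$, because $-h_\lambda$ is nondecreasing. Only after this is the defect inequality for $\overline{|\bF|^2}-|\bF|^2$ derived.

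Your bounds $\nabla\bv_k\in L^2(Q_T)$ and $\bF_k\in L^4(Q_T)$ are also not justified. Energy plus entropy do control $\sup_t\int_\Omega g(\theta_k)\tilde\psi(\bB_k)$, but that gives at most $\bF_k\in L^\infty(0,T;L^2)$. It says nothing about the space--time cross term $\int_{Q_T}g(\theta_k)\bB_k:\bD_k$ that remains when you add the tested momentum and $\bF$ equations. The entropy dissipation only controls $|\bD_k|^2/\theta_k$, which degenerates for large $\theta_k$.

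The paper proceeds in two steps. First, it splits the $\lambda$-entropy balance into two regions:
\begin{itemize}
\item on $\{\theta\le M\}$ it uses the plain entropy bound, since $g(\theta)/\theta$ is bounded below there;
\item on $\{\theta\ge M\}$ it absorbs the coupling term, because $g'(\theta)\theta^\lambda+h_\lambda(\theta)\le CM^{-\delta}\theta^{\lambda-1}$.
\end{itemize}
This gives $\int_{Q_T}|\bD_k|^2\theta_k^{\lambda-1}\le C$. Second, it tests \eqref{Fev} by $2g_\infty\bF$, with $g_\infty=\lim_{\theta\to\infty}g(\theta)$, so the cross term becomes $(g(\theta)-g_\infty)\bB:\bD$. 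This is absorbed by Young's inequality: $0\le g_\infty-g(\theta)\lesssim\min\{1,\theta^{-\delta}\}$, so with $\lambda=1-\delta$ the integral $\int_{Q_T}|g(\theta_k)-g_\infty||\bD_k|^2$ is bounded.

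Both steps use the decay $\theta^{1+\delta}g'(\theta)\to L_\delta$ of \eqref{growthg010}, which is what the paper's proof actually relies on. Under \eqref{growthg01} alone one only has $g'\theta^\lambda+h_\lambda\lesssim\theta^{\lambda-1}$ with no small factor. So your absorption ``using \eqref{growthg01}'' does not close.
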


\subsection{A priori estimates}\label{estimates}
Before we establish the a priori estimates for the problem \eqref{1tBurg}--\eqref{tic}, resp. for \eqref{1tBurg}--\eqref{2tBurg}, \eqref{Fev}, \eqref{4tBurg}--\eqref{tic}, we define an appropriate primitive function to $\theta^\lambda g''(\theta)$, i.e. the function $h_\lambda(\theta)$ acting in the equation \eqref{defetalambda}, and show, under certain growth assumptions on the function $g$, its properties, which will be used in the derivation of the a priori estimates.

\begin{lem}
\label{suithlambdatheta}
    Let $g\in C^2(\R)$ be a nondecreasing concave function satisfying, for some $\delta\in (0,1)$, the condition 
     \begin{align}
        \lim_{\theta\to\infty} \theta^{1+\delta} g'(\theta)&=: L_\delta\in [0,\infty). \label{growthg010}
    \end{align}Let the function $h_\lambda=h_\lambda(\theta)$, for $\lambda\in (0,1)$ and $\theta\in (0,\infty)$, be defined as
     \begin{equation}
     \label{defhlambdatheta}
         h_\lambda (\theta):=  \int_\theta^\infty -z^\lambda g''(z) \mathrm{d} z.
     \end{equation}
     Then $h_\lambda(\theta)$ is always nonnegative, bounded, and there holds 
     \begin{equation}
         \label{hlambdathetapf}
         h_\lambda'(\theta)= \theta^\lambda g''(\theta) \qquad \mbox{for all }\theta\in (0,\infty).
     \end{equation}
     In addition,  there exists a finite positive $C=C(g)$ such that for any $\theta\in (0,\infty)$
     \begin{equation}
        \label{hlambdaarr}
        \begin{split}
        h_{\lambda} (\theta)\leq \theta^\lambda g'(\theta)+ 
         \frac{\lambda }{(1-\lambda)}C(g)\theta^{\lambda-\delta-1}.
        \end{split}
    \end{equation}
    \end{lem}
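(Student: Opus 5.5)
The plan is to obtain everything from one integration by parts in \eqref{defhlambdatheta}, after first establishing a pointwise decay bound for $g'$ that holds on all of $(0,\infty)$.

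\emph{Step 1: a global bound on $g'$.} The claim is that there is $C(g)$ with
\begin{equation*}
0\le g'(z)\le C(g)\, z^{-1-\delta}\qquad\text{for all } z\in(0,\infty).
\end{equation*}
For $z\ge z_0$, with $z_0$ large, this follows directly from \eqref{growthg010}, since $z^{1+\delta}g'(z)\le L_\delta+1$. For $z\in(0,z_0]$ I use concavity and the bounds $0\le g\le K$ from \eqref{estg}. Concavity gives
\begin{equation*}
g(z)-g(z/2)\ge \tfrac z2\, g'(z),
\end{equation*}
hence $z\,g'(z)\le 4K$. Therefore $z^{1+\delta}g'(z)\le 4K z_0^{\delta}$ on $(0,z_0]$. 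On the compact middle range, continuity of $g'$ would suffice anyway.

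\emph{Step 2: well-posedness, sign and derivative.} Since $g''\le 0$, the integrand $-z^\lambda g''(z)$ is nonnegative, so $h_\lambda(\theta)\in[0,\infty]$. For $\theta<R$ I integrate by parts:
\begin{equation*}
\int_\theta^R -z^\lambda g''(z)\,\mathrm{d}z=\theta^\lambda g'(\theta)-R^\lambda g'(R)+\lambda\int_\theta^R z^{\lambda-1}g'(z)\,\mathrm{d}z .
\end{equation*}
By Step 1, $R^\lambda g'(R)\le C R^{\lambda-1-\delta}\to0$ as $R\to\infty$. Also $z^{\lambda-1}g'(z)\le C z^{\lambda-2-\delta}$, which is integrable on $(\theta,\infty)$. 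Letting $R\to\infty$ shows that $h_\lambda(\theta)$ is finite for every $\theta>0$, and it also gives the exact representation
\begin{equation*}
h_\lambda(\theta)=\theta^\lambda g'(\theta)+\lambda\int_\theta^\infty z^{\lambda-1}g'(z)\,\mathrm{d}z .
\end{equation*}
Since the integrand in \eqref{defhlambdatheta} is continuous, the fundamental theorem of calculus gives \eqref{hlambdathetapf}. Consequently $h_\lambda$ is nonincreasing and nonnegative, with $h_\lambda(\theta)\to0$ as $\theta\to\infty$.

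\emph{Step 3: the estimate \eqref{hlambdaarr}.} I insert Step 1 into the representation above:
\begin{equation*}
\lambda\int_\theta^\infty z^{\lambda-1}g'(z)\,\mathrm{d}z\le \lambda C(g)\int_\theta^\infty z^{\lambda-2-\delta}\,\mathrm{d}z=\frac{\lambda C(g)}{1+\delta-\lambda}\,\theta^{\lambda-1-\delta}\le \frac{\lambda}{1-\lambda}\,C(g)\,\theta^{\lambda-\delta-1}.
\end{equation*}
This is exactly \eqref{hlambdaarr}.

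\emph{Main obstacle.} The delicate point is the meaning of ``bounded'' near $\theta=0$. Since $h_\lambda$ is monotone, it is bounded on every half-line $[s,\infty)$ with $s>0$, namely by $h_\lambda(s)$. Near zero, however, the only information is $\theta^\lambda g'(\theta)\lesssim\theta^{\lambda-1}$, which may blow up. So I would prove boundedness in the form ``finite, nonincreasing, bounded on $[s,\infty)$ for every $s>0$, and vanishing at infinity.'' I would also note that global boundedness holds if in addition $g'(0^+)<\infty$. Otherwise the rest is routine, and the concavity trick in Step 1 is the only genuinely non-obvious ingredient.
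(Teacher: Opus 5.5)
You correctly prove everything except the boundedness claim, and your route is essentially the paper's. You integrate by parts in \eqref{defhlambdatheta} and use the decay $g'(z)\le C(g)z^{-1-\delta}$ twice: to remove the boundary term at infinity, and to get $\lambda\int_\theta^\infty z^{\lambda-1}g'(z)\,\mathrm{d}z\le\frac{\lambda C(g)}{1+\delta-\lambda}\theta^{\lambda-\delta-1}\le\frac{\lambda C(g)}{1-\lambda}\theta^{\lambda-\delta-1}$. The fundamental theorem of calculus then gives \eqref{hlambdathetapf}. Taking $R\to\infty$ explicitly is slightly more careful than the paper. Your concavity bound on $(0,z_0]$ is valid; it actually yields $zg'(z)\le 2K$. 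It is unnecessary, though: the paper just uses continuity of $g'$ on $[0,z_0]$.

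The gap is that you do not prove the stated global boundedness of $h_\lambda$. Instead you propose to prove only boundedness on each $[s,\infty)$, on the grounds that $\theta^\lambda g'(\theta)$ may blow up at $0$. That concern is real only if $g$ is $C^2$ merely on the open half-line. The lemma assumes $g\in C^2(\R)$, so $g'(0)<\infty$ and $g''$ is continuous on $[0,1]$. This is exactly the condition you list as holding ``in addition''; it is already a hypothesis.

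With it, boundedness is immediate. Since $h_\lambda$ is nonincreasing and the integrand is nonnegative, $0\le h_\lambda(\theta)\le\int_0^1 -z^\lambda g''(z)\,\mathrm{d}z+h_\lambda(1)<\infty$, which is the paper's argument. Equivalently, in your own representation, concavity gives $0\le g'(z)\le g'(0)$. Hence $\theta^\lambda g'(\theta)\le g'(0)$ for $\theta\le 1$ and $\lambda\int_0^1 z^{\lambda-1}g'(z)\,\mathrm{d}z\le g'(0)$.

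This is not cosmetic. The bound near $\theta=0$ is what the paper uses later. In \eqref{pom1est3integr2} and \eqref{pom2est3}, $g'(\theta)\theta^\lambda+h_\lambda(\theta)$ is bounded by a constant on sets such as $\{\theta\le M\}$, and your weakened version would not support those estimates.
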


    \begin{proof}
     The nonnegativity of $h_\lambda(\theta)$ for any $\theta\in (0,\infty)$ follows immediately from the concavity of $g$ (even the integrand in \eqref{defhlambdatheta} is nonnegative). 
     
     To prove \eqref{hlambdathetapf} it suffices to show that the integral in \eqref{defhlambdatheta} is convergent for any fixed $\theta\in (0,\infty)$. 
     Employing the integration by parts and \eqref{growthg01} (together with the property $g'\in C([0,\infty))$), we have, for any $\theta\in (0,\infty)$ and some finite positive $C=C(g)$, that
    \begin{equation*}
        \begin{split}
        h_{\lambda} (\theta)&= [-z^\lambda g'(z)]_{\theta}^{\infty}+\int_\theta^\infty \lambda z^{\lambda-1} g'(z) \mathrm{d} z\\&= \theta^\lambda g'(\theta)+\lambda \int_\theta^\infty z^{1+\delta} g'(z) z^{\lambda-\delta-2}\mathrm{d} z\\
        &\leq \theta^\lambda g'(\theta)+{ \lambda C(g)}\frac{\theta^{\lambda-\delta-1}}{(1+\delta-\lambda)},
        \end{split}
    \end{equation*}
    from which it follows that the integral in \eqref{defhlambdatheta} is convergent, thus \eqref{hlambdathetapf} holds. Moreover, we have proved the estimate \eqref{hlambdaarr}.
    
    The boundedness of the function $h_\lambda$ is obvious. Indeed, for any $\theta\in (0,\infty)$
    $$0\leq h_\lambda(\theta)\leq \int_0^\infty -z^\lambda g''(z) \mathrm{d} z$$ (both inequalities following from the concavity of $g$), which is a convergent integral since the integral in \eqref{defhlambdatheta} is convergent, $\lambda\in (0,1)$ and $g''\in C([0,\infty))$.

\end{proof}

\medskip

In the following, the (finite positive) constants of uniform bounds, whose exact values are not essential for our aims, are denoted as $C,\tilde{C}, \hat{C}, \overline{C}$, $C_i$, $i\in \N$, $K$, $\tilde{K}$, $\hat{K}$, $\overline{K}$, $\varepsilon$, $\tilde{\varepsilon}$. Their values can change throughout the text.

 To simplify the presentation, we consider the three-dimensional setting, i.e. the flow domain $\Omega\subset \R^3$.

\begin{prop}
\label{propest}
    Let $(\bv, p, \bF, \bB, e, \theta)$ be a smooth solution to the problem \eqref{1tBurg}--\eqref{2tBurg}, \eqref{Fev}, \eqref{4tBurg}--\eqref{tic}. Let for simplicity $\rho=c_v=\gamma_*=1$, $\vec{f}\equiv \vec{0}$, $\theta>0$ and $\det \bB>0$ in $Q_T$ and let there exist $K\in (0,\infty)$ such that
   
    \eqref{estnu}--\eqref{estkappa}
    
    Then the following uniform estimates hold ($\tilde{\delta}\in (0,\frac14]$ is arbitrary):
    \small\begin{align}
        \label{est1} \sup_{t\in (0,T)} (\|\bv(t)\|_2^2+\|\theta(t)\|_1+\|e(t)\|_1)&\leq C(\|\bv_0\|_2^2+\|e_0\|_1), \\
        \label{est2} \sup_{t\in (0,T)} \|\bF(t)\|_2^2+\|\nabla \bv\|_{2, Q_T}+ \|\bF\|_{4, Q_T} &\leq \tilde{C}(\delta, T, \Omega, g, \bv_0, \bB_0, \theta_0),\\
        \label{est3} \|\theta\|_{\frac{5}{3}-\tilde{\delta}, Q_T}+\|\nabla \theta\|_{\frac{5}{4}-\tilde{\delta}, Q_T}&\leq \hat{C}(\delta, \tilde{\delta}, T, \Omega, g, \bv_0, \bB_0, \theta_0),\\
        \label{est3a0}\sup_{t\in (0,T)} (\|\ln \theta(t)\|_1 + \|\ln \det \bB(t)\|_2)+\|\nabla\ln \theta\|_{2,Q_T}&\leq \overline{C}(\delta, T,\Omega, g, \bv_0, \bB_0, \theta_0).
    \end{align}
\end{prop}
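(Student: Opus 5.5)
The proof will proceed in four steps, one per estimate, each obtained by testing the equations with a suitable multiplier and closing the bound with Gronwall's inequality.

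\textbf{Step 1: energy estimate \eqref{est1}.} I will integrate the total energy equation \eqref{evE} over $\Omega$. The boundary conditions \eqref{tBurgbc} kill the pressure and heat-flux terms and leave only the friction term $\int_{\partial\Omega}|\bv|^2\ge 0$. This gives
\[
\sup_t \int_\Omega \Bigl(\tfrac{|\bv|^2}{2}+e\Bigr)\le \int_\Omega \Bigl(\tfrac{|\bv_0|^2}{2}+e_0\Bigr).
\]
Since $g-\theta g'\ge 0$ (by concavity of $g$ and $g(0)\ge 0$) and $\tilde\psi\ge 0$, we have $e\ge c_v\theta\ge 0$. This yields \eqref{est1}.

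\textbf{Step 2: the estimate \eqref{est2} and the bound on $\ln\det\bB$.} Here I will not use the entropy $\eta$ itself, because its term $-g'(\theta)\tilde\psi(\bB)$ carries no favourable sign. Instead I will use the rescaled entropy $\eta_\lambda$ of Proposition~\ref{otherform}, with $h_\lambda$ chosen as in Lemma~\ref{suithlambdatheta}.

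\begin{enumerate}
\item Integrate \eqref{evetalambda} over $Q_t$.
\item The right-hand side controls $\int \theta^{\lambda-1}\bigl(|\bD|^2+g(\theta)|\bB-\bI|^2\bigr)$ and $\int\theta^{\lambda-2}|\nabla\theta|^2$.
\item On the left-hand side, $\int\eta_\lambda\le C\int\theta^\lambda$ is bounded by Step 1.
\item The remaining term is $\bigl(\tau|\bB-\bI|^2-2\bB:\bD\bigr)\bigl(g'\theta^\lambda-h_\lambda\bigr)$.
\end{enumerate}

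By \eqref{hlambdaarr} the factor $g'\theta^\lambda-h_\lambda\ge -C\theta^{\lambda-\delta-1}$, which is harmful only for small $\theta$. For $g'$ one also has the bound $g'\theta^\lambda-h_\lambda\le g'\theta^\lambda\le C$. I will absorb the problematic part using Young's inequality against the dissipation $\theta^{\lambda-1}|\bD|^2$, combined with the Gronwall-type control coming from the equation for $\bF$.

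For the equation for $\bF$, I test \eqref{Fev} with $\bF$. This gives
\[
\frac{d}{dt}\|\bF\|_2^2+\int\tau|\bF\bF^T|^2\le \int \bigl(2\nabla\bv:\bB+C|\bB|\bigr).
\]
The term $\int\nabla\bv:\bB$ is controlled by $\|\nabla\bv\|_2\|\bF\|_4^2$. Combined with the kinetic energy identity (test \eqref{2tBurg} with $\bv$), this term cancels exactly: $\int 2g(\theta)\bB:\nabla\bv$ appears with opposite signs if I instead test \eqref{Fev} with $g(\theta)\bF$. Testing with $g(\theta)\bF$, however, produces a term $\partial_t g(\theta)|\bF|^2$, which is handled through the internal energy equation. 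This is precisely the cancellation encoded in \eqref{evE}.

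The cleanest route is therefore to combine the total energy with the $\eta_\lambda$ identity. The combination $E-\theta_*\eta_\lambda$-type plays the role of a ballistic free energy, and it controls $\tilde\psi(\bB)$, hence $\|\bF\|_2^2$ and $\|\ln\det\bB\|_1$. Its dissipation then gives $\|\nabla\bv\|_2$ and $\|\bF\|_4$ via $|\bB-\bI|^2$. The lower bounds on $\nu$ and $\tau$ and the upper bounds on $g$, $g'$ are used here.

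Separately, testing \eqref{3tBurg} with $-\bB^{-1}$ (that is, evolving $\ln\det\bB$) gives the transport identity $\partial_t\ln\det\bB+\bv\cdot\nabla\ln\det\bB=\tau\,\mathrm{tr}(\bI-\bB)$. Multiplying by $\ln\det\bB$ and using $\bF\in L^4$ then gives the $L^\infty L^2$ bound in \eqref{est3a0}.

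\textbf{Step 3: temperature estimates \eqref{est3} and the rest of \eqref{est3a0}.} I will use the $\eta_\lambda$ identity again, now with the bounds from Step 2 available. Its dissipation gives
\[
\int_{Q_T}\theta^{\lambda-2}|\nabla\theta|^2\le C
\quad\text{for every }\lambda\in(0,1).
\]
Combining this with $\theta\in L^\infty L^1$ and the standard Boccardo--Gallouët interpolation (Gagliardo--Nirenberg applied to $\theta^{\lambda/2}$) yields $\theta\in L^{5/3-\tilde\delta}$ and $\nabla\theta\in L^{5/4-\tilde\delta}$ as $\lambda\to 1$.

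For the logarithmic bounds, I integrate the entropy inequality \eqref{eventr}. Since $\eta\le c_v\ln\theta$ and $g'\tilde\psi$ is already bounded in $L^\infty L^1$, this gives an upper bound on $-\int\ln\theta$. Together with $\theta\in L^\infty L^1$ this controls $\|\ln\theta\|_1$, and the dissipation $\kappa|\nabla\theta|^2/\theta^2$ gives $\|\nabla\ln\theta\|_{2,Q_T}$.

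\textbf{Main obstacle.} I expect the hardest step to be Step 2: extracting $\|\bF\|_{L^\infty L^2}$ and $\|\bF\|_4$ without any smallness assumption. The difficulty is the indefinite coupling term $2\bB:\bD\,(g'\theta^\lambda-h_\lambda)$ near $\theta\to 0$. I will first try to bound it using \eqref{hlambdaarr}: for $\lambda$ small, the coefficient $\lambda/(1-\lambda)$ is small, and the singular factor $\theta^{\lambda-\delta-1}$ can be split between the dissipation terms $\theta^{\lambda-1}|\bD|^2$ and $\theta^{\lambda-1}g|\bB-\bI|^2$. For large $\theta$ the factor is bounded, and I will close that regime with Gronwall's inequality.
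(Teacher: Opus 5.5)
Your Step~2 has a genuine gap: nothing in it produces the unweighted bounds $\|\nabla\bv\|_{2,Q_T}$ and $\|\bF\|_{4,Q_T}$.
\begin{itemize}
\item The total energy $\int_\Omega E$ is conserved and yields no dissipation.
\item Every dissipative term coming from $\eta$ or $\eta_\lambda$ carries a weight that vanishes as $\theta\to\infty$, e.g.\ $\theta^{\lambda-1}|\bD|^2$, $\theta^{\lambda-1}g(\theta)|\bB-\bI|^2$, $h_\lambda(\theta)|\bB-\bI|^2$. Since $\theta$ is only bounded in $L^\infty(0,T;L^1(\Omega))$, a ``ballistic free energy'' built from $E$ and $\eta_\lambda$ cannot give these bounds through its dissipation.
\item Testing \eqref{Fev} by $\bF$ alone is circular: you bound the coupling by $\|\nabla\bv\|_2\|\bF\|_4^2$, which is what you want to estimate.
\item Testing by $g(\theta)\bF$ only leads back to \eqref{evE}, as you note yourself.
\end{itemize}
The missing idea is to test \eqref{2tBurg} by $\bv$ and \eqref{Fev} by the \emph{constant} multiple $2g_\infty\bF$, with $g_\infty:=\lim_{\theta\to\infty}g(\theta)>0$. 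This gives
\[
\frac{d}{dt}\Bigl(\frac12\|\bv\|_2^2+g_\infty\|\bF\|_2^2\Bigr)+2\int_\Omega\nu|\bD|^2+g_\infty\int_\Omega\tau\bigl(|\bB|^2-|\bF|^2\bigr)\le 2\int_\Omega\bigl(g_\infty-g(\theta)\bigr)\bB:\bD .
\]
After absorbing the $|\bB|^2$ part, everything reduces to bounding $\int_{Q_T}(g_\infty-g(\theta))|\bD|^2$. The paper does this using the decay $g_\infty-g(\theta)=O(\theta^{-\delta})$ together with a previously derived bound $\int_{Q_T}\theta^{\lambda-1}|\bD|^2\le C$. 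Alternatively, split $Q_T$ into $\{\theta\le M\}$, where the entropy bound of the next paragraph controls $\int|\bD|^2$, and $\{\theta>M\}$, where $g_\infty-g(M)$ is small; this also closes the estimate.

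Your route to the weighted bounds is flawed as well, in three ways.
\begin{itemize}
\item The entropy $\eta$ does have a favourable sign. Since $-g'(\theta)\tilde\psi(\bB)\le0$, we have $\eta\le\ln\theta\le\theta$, so integrating \eqref{eventr} yields $\int_{Q_T}\theta^{-1}\bigl(2\nu|\bD|^2+\tau g(\theta)|\bB-\bI|^2\bigr)\le C$. Hence $\int_{\{\theta\le M\}}(|\bD|^2+|\bB-\bI|^2)\le C(M)$. You use this very sign yourself in Step~3 for $\|\ln\theta\|_1$.
\item Your treatment of the factor $g'\theta^\lambda-h_\lambda$ is inverted. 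Since $h_\lambda$ is bounded, the factor is $O(1)$ near $\theta=0$, and \eqref{hlambdaarr} matters only for large $\theta$. There it shows $|g'\theta^\lambda-h_\lambda|\le CM^{-\delta}\theta^{\lambda-1}$ on $\{\theta\ge M\}$, so those contributions are absorbed by the $\eta_\lambda$ dissipation once $M$ is large. On $\{\theta\le M\}$, in particular at intermediate temperatures, the factor is of order one with no smallness. Neither pointwise absorption nor Gronwall closes this region, since no fixed-time quantity controls $|\bD|^2$. This is exactly where the entropy bound above is needed.
\item Your proposed splitting of $\theta^{\lambda-\delta-1}$ between $\theta^{\lambda-1}|\bD|^2$ and $\theta^{\lambda-1}g(\theta)|\bB-\bI|^2$ near $\theta=0$ fails regardless of the small prefactor $\lambda/(1-\lambda)$, because $\theta^{-\delta}/\sqrt{g(\theta)}\to\infty$.
\end{itemize}
Your Step~3 relies on \eqref{est2} to bound the indefinite terms of the $\eta_\lambda$ balance, so it inherits the gap. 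Step~1, the bounds on $\ln\theta$, $\nabla\ln\theta$ and $\ln\det\bB$, and the interpolation argument are fine and agree with the paper.
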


\begin{proof}
    \normalsize The estimate of $\|\bv(t)\|_2$ and $\|e(t)\|_1$ by the right-hand side of \eqref{est1} can be achieved, supposed that $e\geq 0$ in $Q_T$, by summing \eqref{2tBurg} multiplied scalarly by  $\bv$ with \eqref{4tBurg}, integrating the result over $(0,t)\times\Omega$, where $t\in (0,T)$ is arbitrary, and using \eqref{1tBurg}, \eqref{tBurgbc}, \eqref{tic}, the integration by parts and the symmetry of $\bT$ (following from \eqref{CauchtBurg} and \eqref{BeqFFT}). To obtain the validity of \eqref{est1} it remains to prove that also $\|\theta(t)\|_1$ is estimated by its right-hand side and that $e\geq 0$ in $Q_T$. Since $g\in C^2(\R)$ is  concave (which implies that $g(\theta)-\theta g'(\theta)$ is nondecreasing) and $g(0)\geq 0$, we have that 
\begin{equation}\label{posgmgp}
    g(\theta)-\theta g'(\theta)\geq g(0)\geq 0\quad \mbox{in } Q_T.
\end{equation} 
Moreover, since for 

we have
\begin{equation}
\label{dertildepsi}
    \tilde{\psi}'(\bB)= \bI- \bB^{-1}, \quad \tilde{\psi}''(\bB)= \bB^{-1}\otimes \bB^{-1},
\end{equation}
we see that $\tilde{\psi}$ is convex and achieves its minimum at $\bB=\bI$. This minimum is equal to zero, hence
\begin{equation}
\label{postildepsi}
    \tilde{\psi}(\bB)\geq 0\quad \mbox{in } Q_T.
\end{equation}
As we assume that $\theta>0$, we get, by \eqref{etBurg}, \eqref{posgmgp} and \eqref{postildepsi}, that $e> 0$ in $Q_T$ and that the term $\|\theta(t)\|_1$ is also estimated by the right-hand side of \eqref{est1}. Thus \eqref{est1} holds.

The proof of \eqref{est2} is split into the following three paragraphs. We may use Proposition \ref{otherform} and Lemma \ref{suithlambdatheta} since all their assumptions are satisfied. In the first paragraph, we show that, for any $M\in (0,\infty)$, there holds 
\begin{equation}
    \label{entrinequality}
    \int_{ \{\theta\leq M\}} (|\bD|^2 + |\bB-\bI|^2)\leq C(M, g, \bv_0, \bB_0, \theta_0).
\end{equation}
Integrating \eqref{eventr}, with $\eta$ defined in \eqref{defentpsi}, by parts over $\Omega$ and using \eqref{tBurgbc}, we obtain ($\kappa$, $\nu$ and $\tau$ depend on $\theta$ and $\bB$)
\begin{equation}\label{entr}
    \int_{\Omega}\partial_t (\ln \theta - g'(\theta) \tilde{\psi}(\bB))=\int_{\Omega} \left(\frac{\kappa|\nabla \theta|^2}{\theta^2}+\frac{2\nu|\bD|^2+\tau g(\theta) |\bB-\bI|^2}{\theta}\right).
\end{equation}
Integrating \eqref{entr} over time and using the nonnegativity of $g'(\theta) \tilde{\psi}(\bB)$, the property $\ln y\leq y$ valid for all $y\in \R$, the estimate \eqref{est1} and the conditions \eqref{tic}, we see  that (here, $\tilde{C}$ depends on $\bB_0$ and $\theta_0$, and $\hat{C}$ depends on $\bv_0$, $\bB_0$ and $\theta_0$)
\begin{equation}
\label{pomest2}
    \int_{Q_T} \frac{2\nu|\bD|^2+\tau g(\theta)|\bB-\bI|^2}{\theta}\leq  \sup_{t\in (0,T)}\int_{\Omega}\ln \theta (t) + \tilde{C}\leq \sup_{t\in (0,T)}\|\theta(t)\|_1+\tilde{C}\leq \hat{C}.
\end{equation}
From \eqref{estnu}, \eqref{esttau}, \eqref{pomest2} and the properties of $g$\footnote{Since $g\in C^2([0, \infty))$ is non constant, non decreasing and concave, we have that $g'>0$ in some right neighbourhood of $0$, hence, using also the nonnegativity of $g$, we have that $g(s)>0$ for any $s>0$, and if $g(\theta)\to 0$ as $\theta\to 0+$, then the l'Hospital rule gives
$$\lim_{\theta\to0+}(g(\theta)/\theta)=  g'(0)>0.$$} we conclude the validity of \eqref{entrinequality}.

Now, we improve the estimate \eqref{entrinequality} and show, for any $\lambda\in (0,1)$, that
\begin{equation}
\label{mainpomest2}
    \int_{Q_T} \frac{|\bD|^2}{\theta^{1-\lambda}}\leq C(\delta, \lambda, T, \Omega, g, \bv_0, \bB_0, \theta_0),
\end{equation} 
where $\delta\in (0,1)$ satisfies \eqref{growthg01}.
Integrating \eqref{evetalambda}, with $\lambda\in (0,1)$ arbitrary, $\eta_\lambda$ defined in \eqref{defetalambda} and $h_\lambda(\theta)$ defined in \eqref{defhlambdatheta}, over $\Omega$, and using the integration by parts and \eqref{tBurgbc}, we get
     \begin{equation}
     \label{pom1est3}
         \begin{split}
              &\int_{\Omega} \partial_t \left(\frac{\theta^\lambda}{\lambda}-h_{\lambda}(\theta) \tilde{\psi}(\bB)\right) + \int_{\Omega}\left(\tau |\bB-\bI|^2-2\bB:\bD\right)(g'(\theta)\theta^\lambda-h_\lambda (\theta))\\
              & \quad =(1-\lambda) \int_{\Omega}  \frac{\kappa|\nabla \theta|^2}{\theta^{2-\lambda}}+\int_{\Omega}\frac{2\nu|\bD|^2+\tau g(\theta)|\bB-\bI|^2}{ \theta^{1-\lambda}}.
         \end{split}
     \end{equation} 
     Integrating \eqref{pom1est3} over time, using also \eqref{estnu}, \eqref{esttau}, \eqref{tic}, the nonnegativity of $ \tilde{\psi} (\bB)$, $h_\lambda(\theta)$ and $g'(\theta)$ (see \eqref{postildepsi} and \eqref{defhlambdatheta} with $g$ being a concave nondecreasing function) and the Young inequality, we get
     \small\begin{equation}
         \label{pom1est3integr}
         \begin{split}
         \int_0^t \int_\Omega \frac{ |\bD|^2+ g(\theta)|\bB-\bI|^2}{\theta^{1-\lambda}}\leq \tilde{K}\int_0^t \int_\Omega (|\bB-\bI|^2+|\bD|^2)(g'(\theta)\theta^\lambda+h_\lambda(\theta))\\
          + \frac{\tilde{K}}{\lambda}\int_{\Omega} \theta^\lambda(t)+\tilde{C}(\bB_0, \theta_0).
         \end{split}
     \end{equation}
     \normalsize Let us note that $t\in (0,T)$ in \eqref{pom1est3integr} is arbitrary and all integrands in \eqref{pom1est3integr} are nonnegative, hence, 
     applying the fact that the nondecreasing $g$ satisfies $g(s)>0$ for any $s>0$ (see the footnote on the previous page), the boundedness of $g'(\theta) \theta^\lambda$ and $h_\lambda(\theta)$ (see Lemma \ref{suithlambdatheta}), the estimate \eqref{est1} and the Hölder inequality, we can write for any $M\in (0,\infty)$  
     \small\begin{equation}
         \label{pom1est3integr2}
         \begin{split}
         &\int_{Q_T} \frac{|\bD|^2}{\theta^{1-\lambda}} + \int_{\{\theta\geq M\}} \frac{|\bB-\bI|^2}{\theta^{1-\lambda}}\leq 
         \left(1+\frac{1}{g(M)}\right)\left(\hat{K}(g)\int_{\{\theta\leq M\}} (|\bB-\bI|^2+|\bD|^2)\right.\\& \left.\qquad+\tilde{K} \int_{\{\theta\geq M\}} (|\bB-\bI|^2+|\bD|^2)(g'(\theta)\theta^\lambda+h_\lambda(\theta))+\hat{C}(\lambda, T, \Omega, \bv_0, \bB_0, \theta_0)\right).
         \end{split}
     \end{equation}
     The first integral on the right-hand side of \eqref{pom1est3integr2} is estimated by \eqref{entrinequality}. The second integral on the right-hand side of \eqref{pom1est3integr2} can be absorped by its left-hand side. Indeed, using \eqref{growthg01} (and the fact $g'\in C([0,\infty))$), we have
     \begin{equation}
         \label{pom1est3integr3}
         \begin{split}
         \int_{\{\theta\geq M\}}(|\bB-\bI|^2+|\bD|^2)g'(\theta)\theta^\lambda
         \leq \int_{\{\theta\geq M\}}\frac{(|\bB-\bI|^2+|\bD|^2)}{\theta^{1-\lambda}M^\delta}g'(\theta)\theta^{1+\delta}\\
         \leq \frac{\overline{C}(g)}{M^\delta} \int_{\{\theta\geq M\}} \frac{|\bB-\bI|^2+|\bD|^2}{\theta^{1-\lambda}}, 
         \end{split}
     \end{equation}
     and, thanks to \eqref{hlambdaarr} and \eqref{pom1est3integr3}, it holds
     \begin{equation}
         \label{pom1est3integr4}
         \int_{\{\theta\geq M\}}(|\bB-\bI|^2+|\bD|^2)h_\lambda(\theta)\leq \frac{C_1(\lambda,g)}{M^\delta} \int_{\{\theta\geq M\}} \frac{|\bB-\bI|^2+|\bD|^2}{\theta^{1-\lambda}}.
     \end{equation}
     Hence, if $M\in (0, \infty)$ is sufficiently large, we get from \eqref{entrinequality} and \eqref{pom1est3integr2}--\eqref{pom1est3integr4} that
     \begin{equation}
         \label{pom1est3integr5}
         \int_{\{\theta\geq M\}} \frac{(|\bD|^2+|\bB-\bI|^2)}{\theta^{1-\lambda}}\leq C_2(\delta, \lambda, M, T, \Omega, g, \bv_0, \bB_0, \theta_0),
     \end{equation}
     which, again together with \eqref{entrinequality} and \eqref{pom1est3integr2}--\eqref{pom1est3integr4}, gives the desired \eqref{mainpomest2}.
     \normalsize 
     
     Now, we are ready to prove \eqref{est2}. Since the non constant function $g$ is nonnegative, nondecreasing and bounded, there exists
$$\lim_{\theta\to \infty} g(\theta)=: g_\infty\in (0, \infty).$$ Multiplying (scalarly) \eqref{2tBurg} by $\bv$ and \eqref{Fev} by $2g_\infty\bF$, summing the results, integrating by parts over time and space and using \eqref{tBurgbc}, \eqref{tic} and the symmetry of $\bD$ and $\bF \bF^T$, we obtain for all $t\in (0,T)$
\begin{equation}
\label{pom2est2}
\begin{split}
    \frac{\|\bv(t)\|_2^2}{2} + g_\infty\|\bF(t)\|_2^2+2\int_0^t \int_{\Omega}     \nu |\bD|^2+2\int_0^t \int_{\Omega} (g(\theta)-g_\infty)\bF \bF^T: \bD     \\
    +\int_0^t \int_{\partial\Omega} |\bv_\tau|^2+  g_\infty\int_0^t \int_{\Omega} \tau(|\bF \bF^T|^2-|\bF|^2)= 
    \frac{\|\bv_0\|_2^2}{2}+ g_\infty \|\bF_0\|_2^2.
\end{split}
\end{equation}
Applying the Young inequality on the last term in the first line, we get  for any $\varepsilon\in (0,\infty)$ and certain $\tilde{K}=\tilde{K}(\varepsilon)\in (0,\infty)$
\begin{equation}
\label{pom3est2}
     (g(\theta)-g_\infty) \bF \bF^T: \bD\leq  \varepsilon |g(\theta)-g_\infty|  |\bF \bF^T|^2+ \tilde{K}(\varepsilon)|g(\theta)-g_\infty| |\bD|^2. 
\end{equation}
 Again, by the means of the Young inequality applied on the term $\tau |\bF|^2$, we deduce from \eqref{pom2est2}, \eqref{pom3est2} and the fact $\bB_0=\bF_0 \bF_0^T$ (and \eqref{estnu}, \eqref{esttau}, the Korn inequality and the inequality $|\bF|^4\leq 3|\bF \bF^T|^2$) that,  to prove \eqref{est2}, it suffices to show that
\begin{equation}
\label{pom4est2}
    \int_{Q_T}|g(\theta)-g_\infty| |\bD|^2\leq \tilde{C}(\delta, T, \Omega, g, \bv_0, \bB_0, \theta_0),
\end{equation}
where $\delta\in (0,1)$ satisfies \eqref{growthg01}.
    However, taking into account \eqref{mainpomest2} and the fact that $g$ is nondecreasing and bounded, we see that \eqref{pom4est2} holds if for some $\overline{K}(\delta)\in (0,\infty)$
    and $L_\delta$ defined in \eqref{growthg01}
    $$
        \lim_{\theta\to \infty} \theta^\delta (g_\infty-g(\theta))= \overline{K}(\delta)L_\delta\in [0,\infty).
    $$
    The latter holds due to the l'Hospital rule and \eqref{growthg01}. Indeed,
    $$
    \lim_{\theta\to \infty} \theta^\delta (g_\infty-g(\theta))=\lim_{\theta\to \infty} \frac{ g_\infty-g(\theta)}{1/\theta^\delta}= \lim_{\theta\to \infty} \left(\frac{ -g'(\theta)}{-\delta/\theta^{1+\delta}}\right)=\frac{L_\delta}{\delta}.
    $$
    The proof of \eqref{est2} is finished.

     Next, we prove the estimate \eqref{est3}. 
    Employing the boundedness of $g'(\theta)\theta^\lambda$ and $h_\lambda(\theta)$, the nonnegativity of $h_\lambda(\theta)\tilde{\psi}(\bB)$ and the estimate \eqref{est2} together with \eqref{BeqFFT} and \eqref{tic}, we see that \eqref{pom1est3} integrated over time yields for all $t\in (0,T)$ and $\lambda\in (0,1)$ 
    \begin{equation}
        \label{pom2est3}
        (1-\lambda)\int_0^t\int_{\Omega}  \frac{\kappa|\nabla \theta|^2}{\theta^{2-\lambda}}\leq \frac{1}{\lambda} \int_{\Omega} \theta^\lambda(t)+C(\delta, \lambda, T, \Omega, g, \bv_0, \bB_0, \theta_0),
    \end{equation}
    which, thanks to \eqref{estkappa}, \eqref{est1} and the Hölder inequality, implies that
    \begin{equation}
        \label{est1nablathetalambda}
        \int_{Q_T} \frac{|\nabla \theta|^2}{\theta^\lambda}\leq \tilde{C}(\delta, \lambda, T, \Omega, g, \bv_0, \bB_0, \theta_0)\quad \mbox{for all } \lambda\in (1,2).
    \end{equation}
    The estimate \eqref{est1nablathetalambda} gives that $\nabla (\theta^{\frac{2-\lambda}{2}})$ is bounded in $L^2(Q_T)$ for any $\lambda\in (1,2)$, and since $\theta$ is bounded in $L^\infty(0,T; L^1(\Omega))$, we have that ($\hat{C}$ depends on $\delta, \lambda$, $T$, $\Omega$, $g$, $\bv_0$, $\bB_0$ and $\theta_0$)
    \begin{equation}
        \|\theta^{\frac{2-\lambda}{2}}\|_{L^\infty\left(0,T; L^\frac{2}{2-\lambda}(\Omega)\right)}+\|\theta^{\frac{2-\lambda}{2}}\|_{L^2(0,T; W^{1,2}(\Omega))}\leq \hat{C}\quad \mbox{for all } \lambda\in (1,2).
    \end{equation}
    Hence, since $W^{1,2}(\Omega)\hookrightarrow L^6(\Omega)$, we have that $\theta^{\frac{2-\lambda}{2}}$ is bounded in $L^\beta (Q_T)$ if, for a suitable $\alpha\in (0,1)$, it holds
    \begin{equation}
    \label{interpol1}
        \frac{\alpha}{\infty}+\frac{1-\alpha}{2}= \frac{1}{\beta},\qquad 
        \frac{\alpha}{\frac{2}{2-\lambda}}+\frac{1-\alpha}{6}= \frac{1}{\beta}.
    \end{equation}
    The equalities in \eqref{interpol1} hold true for
    \begin{equation}
        \label{interpol2}
        \alpha= \frac{2}{8-3\lambda}, \qquad \beta = \frac{2(8-3\lambda)}{6-3\lambda}.
    \end{equation}
    Thus, for all $\lambda\in (1,\frac53]$, we have that
    \begin{equation}
        \label{interpol3}
        \|\theta^{{\frac{2-\lambda}{2}}}\|_{\beta,Q_T}= \|\theta\|_{\frac{8-3\lambda}{3},Q_T}^\frac{8-3\lambda}{3\beta}\leq \overline{C}(\delta, \lambda, T, \Omega, g, \bv_0, \bB_0, \theta_0), 
    \end{equation}
    which means that $\|\theta\|_{\frac53-\tilde{\delta}, Q_T}$ is bounded for any $\tilde{\delta}\in (0,\frac23]$.

    To complete the proof of \eqref{est3}, we use the Young inequality and write, for any $\lambda\in (1,\frac53]$, that
    \begin{align}
    \label{est3alfin}
        \int_{Q_T} |\nabla \theta|^\mu= \int_{Q_T} \left( \frac{|\nabla \theta|^2}{\theta^\lambda}\right)^\frac{\mu}{2} \theta^{\frac{\mu\lambda}{2}}\leq \int_{Q_T} \frac{|\nabla \theta|^2}{\theta^\lambda}+\theta^\frac{\mu \lambda}{2 \sigma}= \int_{Q_T} \frac{|\nabla \theta|^2}{\theta^\lambda} + \theta^{\frac{2-\lambda}{2}\beta},
    \end{align}
 where $\mu, \sigma$ are such that 
 \begin{equation}\label{interpol4}
     \frac{\mu}{2}+\sigma=1, \qquad \frac{\mu \lambda}{2\sigma}= \frac{2-\lambda}{2}\beta,
\end{equation}
or equivalently,
\begin{equation}
    \label{interpol5}
    \sigma= \frac{2\lambda}{2\lambda+\beta(2-\lambda)}=\frac{3\lambda}{8}, \qquad \mu= \frac{2\beta(2-\lambda)}{2\lambda+\beta(2-\lambda)}= \frac{8-3\lambda}{4}.
\end{equation}
Let us note that the last expression in \eqref{est3alfin} is bounded due to \eqref{est1nablathetalambda} and \eqref{interpol3}. Thus \eqref{est3alfin} together with the expression for $\mu$ in \eqref{interpol5} give the boundedness of $\|\nabla \theta\|_{\frac{5}{4}-\tilde{\delta}, Q_T}$ for any $\tilde{\delta}\in (0, \frac{1}{4}]$, which completes the proof of \eqref{est3}. 

Last, we prove \eqref{est3a0}. The estimate of $\|\ln \theta(t)\|_1$ independent of $t\in (0,T)$ follows from \eqref{pomest2} and the fact that $\ln y\leq y$ for any $y\in \R$, i.e., we have ($C$ depends on $\bB_0$ and $\theta_0$, $\tilde{C}$ depends on $\bv_0$, $\bB_0$ and $\theta_0$)
     \begin{equation}
     \label{estlnt}
         \|\ln \theta(t)\|_1=\int_{\Omega} ((\ln \theta(t))_+ + (\ln \theta(t))_-)\leq 2\|\theta(t)\|_1+C\leq \tilde{C},
     \end{equation}
     where, for any function $f: \R\to \R$, we define
     \begin{equation}
     \label{deffypm}
        f(y)_+:= \max\{0, f(y)\},\quad f(y)_-:= \max\{0, -f(y)\},\qquad y\in \R.
     \end{equation}
     Moreover, from \eqref{estkappa}, \eqref{est1}, \eqref{entr} integrated over time together with \eqref{tic}, the nonnegativity of $g'(\theta) \tilde{\psi}(\bB)$ and the fact that $\ln \theta\leq \theta$ in $Q_T$, we deduce that
     \begin{equation}
     \label{estgradln}
         \|\nabla \ln\theta\|_{2,Q_T}= \left\| \frac{\nabla \theta}{\theta}\right\|_{2,Q_T}\leq C(\bv_0, \bB_0, \theta_0).
     \end{equation}
     To estimate $|\ln \det \bB(t)|$ in $L^{2}(\Omega)$ independently of $t\in (0,T)$, we first multiply \eqref{Fev} scalarly by $2\bF^{-T}$ and use \eqref{1tBurg} and \eqref{BeqFFT} to obtain
     \begin{equation}
     \label{evlndetB}
          \partial_t \ln \det \bB + \diver((\ln \det \bB) \bv) +\tau\mbox{tr} (\bB- \bI)= 0.
     \end{equation}
       Multiplying the equation \eqref{evlndetB} by $\ln \det \bB$, integrating the result over $(0,t)\times \Omega$, where $t\in (0,T)$ is arbitrary, and using the Hölder inequality and \eqref{tic}, we get
      \begin{equation*}
          \int_{\Omega} (\ln \det \bB(t))^2\leq \int_{\Omega} (\ln \det \bB_0)^2+C \|\ln \det \bB\|_{2, Q_T} \|\bB-\bI\|_{2, Q_T}.
      \end{equation*}
      Applying $\sup_{t\in (0,T)}$ on the left-hand side and the Young inequality on the right-hand side, we obtain that
      \begin{equation*}
          \sup_{t\in (0,T)}\|\ln \det \bB(t)\|_2^2\leq \|\ln \det\bB_0\|_2^2+ \varepsilon \sup_{t\in (0,T)}\|\ln \det \bB(t)\|_2^2 + \tilde{C}\|\bB-\bI\|_{2, Q_T}^2
      \end{equation*} 
      with $\varepsilon=\varepsilon(T)$ sufficiently small and $\tilde{C}=\tilde{C}(\varepsilon)$, which, together with \eqref{est2} and \eqref{BeqFFT}, gives 
      \begin{equation}
          \label{estlndetBtL2}
          \sup_{t\in (0,T)} \|\ln \det \bB(t)\|_2\leq \hat{C}(\delta, T, \Omega, g, \bv_0, \bB_0, \theta_0).
      \end{equation}
      This estimate together with \eqref{estlnt} and \eqref{estgradln}, yields \eqref{est3a0}. 

\end{proof}

\medskip

\begin{cor}\label{corpropest}
    Let the assumptions of Proposition \ref{propest} be satisfied. Then, for $\delta\in (0,1)$ introduced in \eqref{growthg01} and any $\tilde{\delta}>0$, there holds
    \small\begin{align}
        \|\partial_t \bv\|_{L^\frac{4}{3}(0,T; (W_{\bn, \diver}^{1,2})^*)}+\|\partial_t \bF\|_{L^\frac43(0,T; ((W^{1,2}(\Omega))^{3\times 3})^*)}&\leq C(\delta, T,\Omega, g, \bv_0, \bB_0, \theta_0),\label{est1der}\\
        \|\partial_t \bB\|_{L^1(0,T; ((W^{1,3+\tilde{\delta}}(\Omega))^{3\times3})^*)}&\leq C(\delta, \tilde{\delta}, T, \Omega, g, \bv_0, \bB_0, \theta_0), \label{est1dera}\\
        \|\partial_t e\|_{L^1(0,T; (W^{1,10+\tilde{\delta}}(\Omega))^*)}&\leq C(\delta, \tilde{\delta}, T, \Omega, g, \bv_0, \bB_0, \theta_0).\label{est2der}
    \end{align}
\end{cor}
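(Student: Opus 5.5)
The plan is to read each time derivative off its evolution equation, test by an arbitrary smooth function, and bound each flux term by Hölder's inequality using only the estimates \eqref{est1}--\eqref{est3a0} of Proposition~\ref{propest}. This gives the dual norm at a.e.\ $t$; the time integrability then follows from Hölder in time.

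\textbf{Velocity.} For $\bw\in W^{1,2}_{\bn,\diver}$, the weak form of \eqref{2tBurg} with \eqref{tBurgbc} gives
$$
\langle\partial_t\bv,\bw\rangle=\int_\Omega(\bv\otimes\bv):\nabla\bw-\int_\Omega(2\nu(\theta)\bD+2g(\theta)\bF\bF^T):\nabla\bw-\int_{\partial\Omega}\bv_\tau\cdot\bw_\tau .
\]
The pressure drops out because $\diver\bw=0$.

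The convective term is controlled by interpolation. From $\bv\in L^\infty L^2\cap L^2W^{1,2}$ we get $\|\bv\|_3^2\le C\|\bv\|_2\|\bv\|_6\le C\|\bv\|_{1,2}$. Hence $\|\bv\otimes\bv\|_{L^{4/3}(0,T;L^2)}$ is bounded, since $\|\bv\|_4^2\le C\|\bv\|_2^{1/2}\|\bv\|_{1,2}^{3/2}$ lies in $L^{4/3}$ in time.

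The remaining terms are easier:
\begin{itemize}
\item $\nu\bD$ is bounded in $L^2(Q_T)$ by \eqref{estnu}.
\item $g\bF\bF^T$ is bounded in $L^2(Q_T)$ by \eqref{estg} and $\bF\in L^4(Q_T)$ from \eqref{est2}.
\item The boundary term is bounded in $L^2$ in time by the trace theorem.
\end{itemize}
Together these give the $L^{4/3}(0,T;(W^{1,2}_{\bn,\diver})^*)$ bound.

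\textbf{Deformation tensor.} For $\bF$ we test \eqref{Fev} by $\bA\in W^{1,2}(\Omega)^{3\times3}$; no boundary term appears because $\bv\cdot\bn=0$.
\begin{itemize}
\item $\bF\otimes\bv$: $\bF\in L^\infty L^2\cap L^4L^4$ and $\bv\in L^{8/3}L^4$ (from the interpolation above) give a bound in $L^{4/3}(0,T;L^2)$, using $\|\bF\|_4\|\bv\|_4$.
\item $\nabla\bv\,\bF$: this is in $L^{4/3}(0,T;L^{4/3})$, which embeds in $(W^{1,2})^*$ because $W^{1,2}\hookrightarrow L^4$.
\item $\bF\bF^T\bF$: this is in $L^{4/3}(Q_T)$, by \eqref{esttau} and \eqref{est2}.
\end{itemize}

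\textbf{Tensor $\bB$.} From \eqref{3tBurg}, every term of $\partial_t\bB$ is in $L^1$ in time with values in a dual space.
\begin{itemize}
\item $\bB\otimes\bv$ lies in $L^1(0,T;L^{3/2})$, since $\bB\in L^2(Q_T)$ and $\bv\in L^2L^6$. It is paired with $\nabla\bA\in L^{3+\tilde\delta}$; the Hölder exponents close because $\tfrac23+\tfrac13<1+$.
\item $\nabla\bv\,\bB$ is in $L^1(Q_T)$. It needs $\bA\in L^\infty$, which holds since $W^{1,3+\tilde\delta}\hookrightarrow L^\infty$ in three dimensions. This is exactly why the exponent $3+\tilde\delta$ appears.
\item $\tau(\bB^2-\bB)$ is in $L^1(Q_T)$ by \eqref{est2}.
\end{itemize}

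\textbf{Internal energy.} I expect this to be the main obstacle, since $e$ has only $L^\infty L^1$ control and the fluxes must be handled carefully. I use \eqref{4tBurg} written via \eqref{pentder1} and \eqref{CauchtBurg}:
$$
\partial_te+\diver(e\bv)-\diver(\kappa\nabla\theta)=2\nu|\bD|^2+2g\bB:\bD .
\]
\begin{itemize}
\item The right-hand side is in $L^1(Q_T)$ by \eqref{est2}, so again we need test functions in $L^\infty$.
\item $\kappa\nabla\theta$ is in $L^{5/4-\tilde\delta}(Q_T)$ by \eqref{est3} and \eqref{estkappa}.
\item The flux $e\bv$ is the delicate one. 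By \eqref{etBurg} and \eqref{estg}, $e\le C(\theta+\tilde\psi(\bB))$. The part $\theta\bv$ is in $L^1(0,T;L^{r})$ by Hölder with $\theta\in L^{5/3-}$ and $\bv\in L^{10/3}(Q_T)$ (interpolation), which gives exponent $\tfrac{10}{9}$. The part $\tilde\psi(\bB)\bv$ needs $\ln\det\bB\in L^\infty L^2$ from \eqref{est3a0} together with $\bB\in L^2(Q_T)$ and $\bv\in L^2L^6$; the worst contribution, $\bB\bv$, is in $L^1L^{3/2}$.
\end{itemize}
Pairing these fluxes with $\nabla\varphi\in L^{10+\tilde\delta}$ therefore needs only $L^1(0,T;L^{10/9})$, whose dual exponent is $10$. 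This explains the exponent $10+\tilde\delta$, with the $\tilde\delta$ absorbing the loss in \eqref{est3}. The key verification is that every flux lies in $L^{(10+\tilde\delta)'}$ in space, integrably in time.
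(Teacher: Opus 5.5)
Your proposal is correct and follows the paper's proof essentially step by step. Like the paper, you test each evolution equation (for $\bv$, $\bF$, $\bB$, $e$) by an arbitrary function from the corresponding predual space. You then bound every flux and source term by H\"older's inequality, using the interpolation bounds $\bv\in L^2(0,T;L^6)\cap L^{10/3}(Q_T)$ together with the embeddings $W^{1,2}\hookrightarrow L^4$, $W^{1,3+\tilde\delta}\hookrightarrow L^\infty$ and $W^{1,10+\tilde\delta}\hookrightarrow L^\infty$. Your exponent bookkeeping, in particular $\theta\bv\in L^{10/9-}(Q_T)$ accounting for the dual exponent $10+\tilde\delta$, matches the paper's argument and is more explicit than it.
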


\begin{proof}
    \normalsize Let us note that the assumptions of Proposition \ref{propest} imply that  the assumptions of Proposition \ref{otherform} and Lemma \ref{suithlambdatheta} are also fulfilled. To prove \eqref{est1der} and \eqref{est1dera}, we multiply \eqref{2tBurg} by a  $\bw\in L^4(0,T; W_{\bn, \diver}^{1,2})$, \eqref{Fev} by an $\bA\in L^4(0,T; (W^{1,2}(\Omega))^{3\times 3})$ and \eqref{3tBurg}, which is satisfied thanks to Proposition~ \ref{otherform}, by an $\tilde{\bA}\in L^\infty (0,T; (W^{1, 3+\tilde{\delta}} (\Omega))^{3\times 3})$, use \eqref{CauchtBurg}, integrate all three resulting equations over $Q_T$ and use the integration by parts to obtain
    \begin{align}
        \label{pomestder1}\int_0^T\langle \partial_t \bv, \bw\rangle &= \int_{Q_T} (\bv\otimes \bv):\nabla \bw- \int_{Q_T} (2\nu\bD+g(\theta)\bB):\nabla \bw-\int_{\Sigma_T} \bv_\tau\cdot \bw_\tau,\\
        \label{pomestderF}\int_0^T\langle \partial_t \bF, \bA\rangle &= \int_{Q_T} (\bF\otimes \bv):\nabla \bA+ \int_{Q_T} \left(\nabla \bv \bF-\frac{\tau}{2}(\bF \bF^T \bF-\bF)\right):\bA,\\
        \label{pomestder1b}\int_0^T\langle \partial_t \bB, \tilde{\bA}\rangle &= \int_{Q_T} (\bB\otimes \bv):\nabla \tilde{\bA}+ \int_{Q_T} \left(\nabla \bv \bB+\bB (\nabla \bv)^T-\tau(\bB^2-\bB)\right):\tilde{\bA}.
    \end{align}
    The estimates \eqref{est1} and \eqref{est2} together with standard Sobolev embeddings and interpolation imply that 
    \small \begin{equation}
        \label{estvinterpol}
        \|\bv\|_{L^2(0,T; (L^6(\Omega))^3)}+\|\bv\|_{L^4(0,T;(L^3(\Omega))^3)}+\|\bv\|_{\frac{10}{3}, Q_T}\leq C(\delta, T,\Omega, \bv_0, \bB_0, \theta_0).
    \end{equation}
    \normalsize Then, \eqref{est1der} and \eqref{est1dera} are  direct consequences of  \eqref{BeqFFT}, \eqref{est2}, \eqref{pomestder1}--\eqref{estvinterpol}, the embeddings $W^{1,2}(\Omega)\hookrightarrow L^4(\Omega)$ and $W^{1, 3+\tilde{\delta}}(\Omega)\hookrightarrow L^\infty(\Omega)$, the boundedness of $g(\theta)$ (and $\nu$, $\tau$), the trace theorem and the Hölder inequality.

    To prove \eqref{est2der}, we multiply \eqref{4tBurg} by a $\varphi\in L^\infty(0,T; W^{1, 10+\tilde{\delta}}(\Omega))$, use \eqref{CauchtBurg}, \eqref{etBurg} and \eqref{deftildepsi}, and integrate the result by parts over $Q_T$ to get
    \begin{equation}
    \label{pomestder2}\begin{split}
    \int_0^T \langle \partial_t e, \varphi\rangle = \int_{Q_T} \left(\theta +(g(\theta)-\theta g'(\theta)) \tilde{\psi}(\bB)\right) \bv\cdot \nabla\varphi-\int_{Q_T} \kappa\nabla \theta\cdot\nabla\varphi\\+\int_{Q_T} (2\nu|\bD|^2+g(\theta)\bB:\bD)\varphi.
    \end{split}
    \end{equation}
    Since $W^{1, 10+\tilde{\delta}}(\Omega)\hookrightarrow L^\infty(\Omega)$, the last integral on the right-hand side is uniformly bounded thanks to \eqref{BeqFFT} and \eqref{est2} (use also the boundedness of $\nu$ and $g(\theta)$ and the Hölder inequality). The first integral on the right-hand side of \eqref{pomestder2} is uniformly bounded due to \eqref{BeqFFT}, \eqref{est2}, \eqref{est3}, \eqref{est3a0}, \eqref{deftildepsi}, the facts that $\bv$ is uniformly bounded in $(L^\frac{10}{3}(Q_T))^3$ (see \eqref{estvinterpol}) and $g(\theta)-\theta g'(\theta)$ is uniformly bounded in $L^\infty (Q_T)$ (as it is nonnegative and $g$ is nondecreasing and bounded) and the Hölder inequality. The uniform boundedness of the second integral on the right-hand side follows from \eqref{est3}.  
\end{proof}

\medskip

\subsection{Weak sequential stability}\label{wss}
Let $\{(\bv_k, p_k, \bF_k, \bB_k, e_k, \theta_k)\}_{k\in \N}$ be smooth solutions to \eqref{1tBurg}--\eqref{2tBurg}, \eqref{Fev}, \eqref{4tBurg}--\eqref{tic} with the initial conditions $\bv_0$, $\bF_0$, $\bB_0$, $e_0$, $\theta_0$ same for all $k\in \N$. Let the material constants be equal to one, let $\vec{f}\equiv \vec{0}$.  Let the functions $\nu, \tau, \kappa: \R\times \R^{3\times 3}\to \R$ and $g:\R\to \R$ satisfy the assumptions of Proposition \ref{propest}. Then, thanks to Proposition \ref{propest} and Corollary \ref{corpropest}, there exist functions $\bv$, $\bF$, $e$, $\theta$ such that, for a suitably extracted subsequence of $\{\bv_k,\bF_k,e_k,\theta_k\}_{k\in \N}$, which we do not relabel, and any $\tilde{\delta}>0$, there holds as $k\to \infty$ (here, for $q>1$, the symbol $q)$ stands for any $\tilde{q}\in[1, q)$)
\small\begin{align}
    \bv_k& \rightharpoonup^* \bv && \mbox{weakly-* in } L^\infty(0,T; L^2_{\bn, \diver})\cap L^2(0,T; W_{\bn, \diver}^{1,2}),\label{vkweak}\\
    \bF_k& \rightharpoonup^* \bF && \mbox{weakly-* in } L^\infty(0,T; (L^2(\Omega))^{3\times3})\cap (L^4(Q_T))^{3\times 3},\label{Fkweak}\\
    \theta_k& \rightharpoonup^* \theta  && \mbox{weakly-* in } L^\infty(0,T; L^1(\Omega))\cap L^{\frac{5}{4})}(0,T; W^{1, \frac{5}{4})} (\Omega))\cap L^{\frac{5}{3})}(Q_T), \\
    \partial_t \bv_k & \rightharpoonup\ \partial_t \bv && \mbox{weakly in }L^\frac{4}{3}(0,T; (W_{\vec{n}, \diver}^{1,2})^*),\label{dervkweak}\\
    \partial_t \bF_k & \rightharpoonup\ \partial_t \bF && \mbox{weakly in }L^\frac{4}{3}(0,T; ((W^{1,2}(\Omega))^{3\times3})^*),\\
    \partial_t e_k & \rightharpoonup^* \partial_t e && \mbox{weakly-* in }\mathcal{M}([0,T]; (W^{1, 10+\tilde{\delta}}(\Omega))^*).
\end{align}
\normalsize
For various \emph{nonlinear} (not necessarily tensor)  quantities $\bQ_k$ (such as $\nabla\bv_k \bF_k$, $\bF_k\bF_k^T\bF_k$, $|\bF_k|^2$ and some others introduced below), which are uniformly bounded in some Lebesgue spaces $L^r(Q_T)$, $r\geq 1$, there is an element $\overline{\bQ}\in L^r(Q_T)$ for $r>1$, $\overline{\bQ}\in \mathcal{M}(\overline{Q_T})$ for $r=1$, and a subsequence of $\{\bQ_k\}_{k\in \N}$ (denoted again $\{\bQ_k\}_{k\in \N}$) so that the following holds as $k\to \infty$:
\begin{align}
\label{conven}
\bQ_k& \rightharpoonup\ \overline{\bQ} \quad\textrm{ weakly in } L^r(Q_T) && \textrm{ for } r>1,\\
\bQ_k& \rightharpoonup^* \overline{\bQ} \quad\textrm{ weakly-* in } \mathcal{M}(Q_T) && \textrm{ for } r=1. 
\end{align} 
Using this convention, and the weak convergence results above together with the Aubin-Lions compactness lemma and the trace theorem, we get that $(\bv, \bF, e, \theta)$ satisfies, for all $\bw\in L^4(0,T; W_{\bn,\diver}^{1,2})$, $\bA\in L^4 (0,T; (W^{1,2}(\Omega))^{3\times 3})$ and $\varphi\in C_c^1((-\infty,T); W^{1, 10+\tilde{\delta}}(\Omega))$, the following identities:
\small\begin{align}
        \label{vincomp}\int_0^T\langle \partial_t \bv, \bw\rangle &= \int_{Q_T} (\bv\otimes \bv):\nabla \bw- \int_{Q_T} (2\nu\bD+\overline{g(\theta)\bF \bF^T}):\nabla \bw-\int_{\Sigma_T}\bv_\tau \cdot \bw_{\tau},\\
        \label{Fincomp}\int_0^T\langle \partial_t \bF, \bA\rangle &= \int_{Q_T} (\bF\otimes \bv):\nabla \bA+\int_{Q_T} \left(\overline{\nabla \bv \bF}-\frac{\tau}{2}(\overline{\bF \bF^T \bF}-\bF)\right):\bA,
    \end{align}
    and
    \be
        \label{eincomp}\begin{split}
        -\int_{Q_T}e \partial_t \varphi - \int_{\Omega} e_0 \varphi(0) &= \int_{Q_T} \left(\theta +\overline{(g(\theta)-\theta g'(\theta)) \tilde{\psi}(\bF \bF^T)}\right) \bv\cdot \nabla\varphi \\& \qquad +\int_{Q_T} (2\nu\overline{|\bD|^2}+\overline{g(\theta) \bF \bF^T:\bD})\varphi-\int_{Q_T}\kappa\nabla\theta\cdot\nabla\varphi.
        \end{split}
        \ee
\normalsize
Our aim is to show that the nonlinear terms converge to proper functions, i.e. that we can remove the bars in \eqref{vincomp}--\eqref{eincomp}, where the equation corresponding to \eqref{eincomp} holds at least with inequality such that the left-hand side is greater than or equal to the right-hand side (we are not able to prove the equality due to the term $\overline{|\bD|^2}$ acting in \eqref{eincomp}) and that $e$ satisfies \eqref{etBurg}. To complete this, we have to show that a subsequence of $\{\theta_k\}_{k\in \N}$ is convergent  almost everywhere in $Q_T$ and that the sequence $\{\bF_k\}_{k\in \N}$ is compact in $(L^1(Q_T))^{3\times 3}$ (we show the compactness of $\{\bF_k\}_{k\in \N}$ in $(L^2(Q_T))^{3\times 3}$).

\normalsize\subsubsection{Almost everywhere convergence of the temperatures \texorpdfstring{$\{\theta_k\}_{k\in\N}$}{thetak}}\label{subsection:almost_everywhere_conv_theta}

To prove the almost everywhere convergence of (a subsequence of) $\{\theta_k\}_{k\in \N}$ we shall employ the Div-curl lemma \ref{div-curl}. In this subsection all the constants of uniform bounds that depend on the data of our problem are denoted by $C$. 

Our first step is to focus on the entropy equation \eqref{eventr}. Recall that
\begin{equation}
    \label{defetakpsiBk}
\eta_k:=  \ln \theta_k - g'(\theta_k) \tilde{\psi}(\bB_k), \quad \tilde{\psi} (\bB_k):= \mathrm{tr }\ \bB_k - 3 - \ln \det \bB_k.
\end{equation}
By \eqref{BeqFFT} and \eqref{est2} we have that
\begin{align}\label{bounds_on_trace_B_k_l2}
    \|\mathrm{tr}\ \bB_k\|_{2, Q_T}^2\leq \|\bF_k\|_{4, Q_T}^4 \leq C,
\end{align}
which, combined with \eqref{est3a0} and the boundedness of $g'$, readily implies
\begin{align}\label{bound_on_entropy_k_l2}
        \|\eta_k\|_{2, Q_T}\leq  C.
\end{align}
Then, by \eqref{bound_on_entropy_k_l2}, \eqref{estvinterpol} and H\"{o}lder's inequality, we obtain
    \begin{align}
    \label{betakvk}
        \|\eta_k\,\bv_k\|_{\frac{5}{4}, Q_T} \leq  C.
    \end{align}
Now denote
    $$
    \vec{q}_k := \eta_k\,\bv_k - \kappa\nabla\ln\theta_k.
    $$
In particular, the bound \eqref{betakvk} together with \eqref{estkappa} and \eqref{est3a0} implies
    \begin{align}
    \label{bqk}
        \|\vec{q}_k\|_{\frac{5}{4}, Q_T} \leq C.
    \end{align}
On the other hand, we may utilize the equation \eqref{eventr}, together with the bounds  \eqref{estkappa}, \eqref{pomest2} and \eqref{estgradln}, to deduce 
    \begin{multline}
    \label{bdivetakqk}
        \|\diver_{t,x}(\eta_k, \vec{q}_k)\|_{1, Q_T} = \|\partial_t\eta_k + \diver_x \vec{q}_k\|_{1, Q_T}\\ = \left\| \kappa\frac{|\nabla_x\theta_k|^2}{\theta^2_k} + \nu\frac{|\bD_k|^2}{\theta_k} + \tau g(\theta_k)\frac{|\bB_k - \mathbb{I}|^2}{\theta_k}\right\|_{1, Q_T}
        \leq C.
    \end{multline}
Thus, by 
\eqref{bound_on_entropy_k_l2}, \eqref{bqk}, \eqref{bdivetakqk} and the Sobolev embeddings, there exist some $\overline{\eta}\in L^{\frac{5}{4}}(Q_T)$ and $\overline{\vec{q}} \in (L^{\frac{5}{4}}(Q_T))^3$ such that (up to subsequences)
    \begin{align}
    \label{etakweak}
        \eta_k &\rightharpoonup \overline{\eta} \quad\text{ weakly in }L^\frac{5}{4}(Q_T),\\
        \label{qkweak}\vec{q}_k &\rightharpoonup \overline{\vec{q}} \quad\text{ weakly in }(L^\frac{5}{4}(Q_T))^3,\\
        \label{divetakqkcomp}\{\diver_{t,x}(\eta_k, \vec{q}_k)\}_{k\in\N}&\text{ is compact in } (W^{1,5}_0(Q_T))^*.
    \end{align}
To proceed, we need an explicitly stated equation for $\tilde{\psi}(\bB_k)$. By \eqref{1tBurg}, \eqref{BeqFFT} and \eqref{dertildepsi} we can multiply \eqref{Fev} scalarly by $2(\bF_k - \bF_k^{-1})$ to obtain
$$
\partial_t \tilde{\psi}(\bB_k) + \diver_x (\tilde{\psi}(\bB_k) \bv) + \tau|\bB_k - \mathbb{I}|^2 = 2(\bB_k - \mathbb{I}) : \bD_k.
$$
Hence, in a similar way, with the use of the equation above, the bounds \eqref{est3a0}, \eqref{bounds_on_trace_B_k_l2}, \eqref{estvinterpol}, as well as \eqref{esttau}, \eqref{est2} and the Sobolev embeddings, there exist $\overline{\tilde{\psi}(\bB)} \in L^{\frac{5}{4}}(Q_T)$ such that (up to a subsequence)
    \begin{align}
    \label{psiBkweak}
        \tilde{\psi}(\bB_k) &\rightharpoonup \overline{\tilde{\psi}(\bB)}\,\,\,\quad\text{ weakly in }L^{\frac{5}{4}}(Q_T),\\
        \label{psiBkvkweak}
        \tilde{\psi}(\bB_k)\, \bv_k &\rightharpoonup \overline{\tilde{\psi}(\bB)}\, \bv\quad\text{ weakly in }(L^{\frac{5}{4}}(Q_T))^3,\\
        \label{divpsiBkvkcomp}
        \{\diver_{t,x}(\tilde{\psi}(\bB_k), \tilde{\psi}(\bB_k)\,\bv_k)\}_{k\in\N}&\text{ is compact in }(W^{1,5}_0(Q_T))^*.
    \end{align}
Now consider the sequence $\{\theta_k^{\frac{1}{5}}\}_{k\in\N}$. By \eqref{est3} there holds
    \begin{align}
    \label{bthetak15}
        \|\theta_k^{\frac{1}{5}}\|_{6, Q_T}\leq C,
    \end{align}
and by \eqref{est1nablathetalambda}
    \begin{align*}
        \|\nabla_x(\theta_k^{\frac{1}{5}})\|_{2, Q_T} = \frac{1}{5}\|\theta_k^{-\frac{4}{5}}\nabla_x\theta_k\|_{2, Q_T} \leq C.
    \end{align*}
Thus
    \begin{align}
    \label{bcurlthetak15}
        \|\mathrm{curl}_{t,x}\,(\theta_k^{\frac{1}{5}}, 0, 0, 0)\|_{2, Q_T} \leq \|\nabla_x(\theta_k^{\frac{1}{5}})\|_{2, Q_T} \leq C.
    \end{align}
In particular, by 
\eqref{bthetak15}, \eqref{bcurlthetak15} and the Sobolev embeddings, there exists some $\overline{\theta^{\frac{1}{5}}}\in L^{6}(Q_T)$ such that (up to a subsequence)
    \begin{align}
    \label{thetak15weak}
        \theta_k^{\frac{1}{5}}&\rightharpoonup \overline{\theta^{\frac{1}{5}}} \quad\text{ weakly in }L^{6}(Q_T),\\
        \label{curlthetak15comp}
        \{\mathrm{curl}_{t,x}\,(\theta_k^{\frac{1}{5}},0,0,0)\}_{k\in\N}&\text{ compact in }((W^{1,2}_0(Q_T))^6)^*.
    \end{align}
We may combine \eqref{etakweak}--\eqref{divpsiBkvkcomp}, \eqref{thetak15weak}--\eqref{curlthetak15comp} and use the Div-curl lemma \ref{div-curl} to deduce
    \begin{align}
        \eta_k\,\theta_k^{\frac{1}{5}} &\rightharpoonup \overline{\eta}\,\overline{\theta^{\frac{1}{5}}} &&\text{ weakly in }L^{1}(Q_T),\label{after_div_curl_conv_eta_multiplied_theta}\\
        \tilde{\psi}(\bB_k)\,\theta_k^{\frac{1}{5}}&\rightharpoonup\overline{\tilde{\psi}(\bB)}\,\overline{\theta^{\frac{1}{5}}}&&\text{ weakly in }L^1(Q_T)\label{after_div_curl_conv_f_multipled_theta}.
    \end{align}
    
Next, let us note that $w\mapsto -g'(w)$ and $w\mapsto w^{\frac{1}{5}}$ are nondecreasing functions. Thus, due to the nonnegativity of $\tilde{\psi}(\bB_k)$, we have in $Q_T$ for any $w\in\R$
    $$
    0\leq \tilde{\psi}(\bB_k)(-g'(\theta_k) + g'(w))(\theta_k^{\frac{1}{5}} - w^\frac{1}{5}).
    $$
In particular, for $w := \left(\overline{\theta^{\frac{1}{5}}}\right)^5$, we get a.e. in $Q_T$
    \begin{align*}
      0\leq \tilde{\psi}(\bB_k)\left(-g'(\theta_k) + g'\left(\left(\overline{\theta^{\frac{1}{5}}}\right)^5\right)\right)\left(\theta_k^{\frac{1}{5}} - \overline{\theta^{\frac{1}{5}}}\right),  
    \end{align*}
which can be rewritten as follows
    \begin{align*}
        -\tilde{\psi}(\bB_k)g'(\theta_k)\overline{\theta^{\frac{1}{5}}} \leq -\tilde{\psi}(\bB_k)g'(\theta_k)\theta_k^{\frac{1}{5}} + \tilde{\psi}(\bB_k) g'\left(\left(\overline{\theta^{\frac{1}{5}}}\right)^5\right)\left(\theta_k^{\frac{1}{5}} - \overline{\theta^{\frac{1}{5}}}\right).
    \end{align*}
Integrating the last inequality over $Q_T$, using the boundedness of $g'$, the uniform boundedness of $\{\tilde{\psi}(\bB_k)\}_{k\in \N}$ in $L^2(Q_T)$ and $\{\theta_k^{\frac{1}{5}}\}_{k\in \N}$ in $L^6(Q_T)$ (see \eqref{est3a0}, \eqref{bounds_on_trace_B_k_l2} and \eqref{bthetak15}) and the Hölder inequality, taking the limit $k\to \infty$ and using the convention \eqref{conven}, we obtain, due to \eqref{after_div_curl_conv_f_multipled_theta}, that
    \begin{align}\label{ineq:for_monotonicity_trick_f_of_B}
        \int_{Q_T} -\overline{g'(\theta)\tilde{\psi}(\bB)}\,\,\,\overline{\theta^{\frac{1}{5}}}\leq \int_{Q_T} -\overline{g'(\theta)\tilde{\psi}(\bB)\theta^{\frac{1}{5}}}.
    \end{align}
Let us also recall that the sequence $\{\ln \theta_k\}_{k\in \N}$ is uniformly bounded in $L^2(Q_T)$. Having this and \eqref{conven}, we may apply \eqref{defetakpsiBk}, \eqref{after_div_curl_conv_eta_multiplied_theta} and \eqref{ineq:for_monotonicity_trick_f_of_B} to get
    \begin{equation}\label{ineq:for_monotonicity_trick_ln_theta}
        \begin{split}
            \int_{Q_T} \overline{\ln(\theta)}\,\overline{\theta^{\frac{1}{5}}} &= \int_{Q_T} \overline{\eta}\,\overline{\theta^{\frac{1}{5}}} + \int_{Q_T} \overline{g'(\theta)\tilde{\psi}(\bB)}\,\,\,\overline{\theta^{\frac{1}{5}}}\\
            &= \int_{Q_T}\overline{\eta\,\theta^{\frac{1}{5}}} + \int_{Q_T} \overline{g'(\theta)\tilde{\psi}(\bB)}\,\,\,\overline{\theta^{\frac{1}{5}}}\\
            & = \int_{Q_T} \overline{\ln(\theta)\theta^{\frac{1}{5}}} - \int_{Q_T} \overline{g'(\theta)\tilde{\psi}(\bB)\theta^{\frac{1}{5}}}
            + \int_{Q_T}\overline{g'(\theta)\tilde{\psi}(\bB)}\,\overline{\theta^{\frac{1}{5}}}\\
            &\geq \int_{Q_T} \overline{\ln(\theta)\theta^{\frac{1}{5}}}.
        \end{split}
    \end{equation}
Notice that $w\mapsto \ln(w)$ and $w\mapsto w^{\frac{1}{5}}$ are increasing functions. Thus, for any $w\in L^1(Q_T)$ such that $w>0$ a.e. in $Q_T$ and $\ln w \in L^\frac54(Q_T)$, there holds
    \begin{align*}
        \int_{Q_T} (\ln(\theta_k) - \ln(w))(\theta_k^{\frac{1}{5}} - w^{\frac{1}{5}}) \geq 0.
    \end{align*}
    Due to \eqref{ineq:for_monotonicity_trick_ln_theta} we may let $k\to \infty$ in the above inequality to deduce
    \begin{align}\label{ineq:monotonicity_trick_for_arbitrary_w}
        \int_{Q_T} (\overline{\ln(\theta)} - \ln(w))(\overline{\theta^{\frac{1}{5}}} - w^{\frac{1}{5}}) \geq 0.
    \end{align}
Now, by \eqref{est3}, \eqref{est3a0} and the weak lower-semicontinuity of the convex function
        $\phi \mapsto \int_{Q_T} e^\phi$, we may in \eqref{ineq:monotonicity_trick_for_arbitrary_w} consider 
    $$
        w := e^{\overline{\ln\theta} - \lambda h}, \quad\lambda > 0, h\in L^\infty(Q_T) \text{ - arbitrary}
    $$
and obtain
    $$
        \int_{Q_T} (\overline{\theta^{\frac{1}{5}}} - e^{\frac{1}{5}(\overline{\ln\theta} - \lambda h)})h\geq 0.
    $$
Letting $\lambda\to 0^+$, using Lebesgue's Dominated Convergence Theorem and  the arbitrariness of $h$, we get
    \begin{align}
    \overline{e^{\frac{1}{5}\ln\theta}} = \overline{\theta^{\frac{1}{5}}} = e^{\frac{1}{5}\overline{\ln\theta}} \text{ a.e. in }Q_T.\label{eq:equations_weak_limits_mono_trick_div_curl}
    \end{align}
Since $w\mapsto e^{\frac{1}{5}w}$ is an increasing function, the above equalities imply that
\small
    \be
    \label{lnthe15pos}
    (\overline{\ln(\theta)} - \ln(\theta_k))(\overline{\theta^{\frac{1}{5}}} - \theta_k^{\frac{1}{5}})=(\overline{\ln(\theta)} - \ln(\theta_k))(e^{\frac{1}{5}\overline{\ln\theta}} - e^{\frac{1}{5}\ln\theta_k}) \geq 0 \text{ a.e. in } Q_T.
    \ee
    \normalsize
Now let us set $w := \theta_k$ in \eqref{ineq:monotonicity_trick_for_arbitrary_w}. Thanks to  \eqref{ineq:for_monotonicity_trick_ln_theta} and \eqref{lnthe15pos}, we obtain
    \begin{align*}
    \lim_{k\to+\infty}\int_{Q_T} |(\overline{\ln(\theta)} - \ln(\theta_k))(\overline{\theta^{\frac{1}{5}}} - \theta_k^{\frac{1}{5}})| 
    =\lim_{k\to+\infty}\int_{Q_T} (\overline{\ln(\theta)} - \ln(\theta_k))(\overline{\theta^{\frac{1}{5}}} - \theta_k^{\frac{1}{5}})  = 0.
    \end{align*}
Hence, up to a subsequence, there holds
    \begin{align*}
       (\overline{\ln(\theta)} - \ln(\theta_k))(e^{\frac{1}{5}\overline{\ln\theta}} - e^{\frac{1}{5}\ln\theta_k}) = (\overline{\ln(\theta)} - \ln(\theta_k))(\overline{\theta^{\frac{1}{5}}} - \theta_k^{\frac{1}{5}}) \rightarrow 0 \text{ a.e. in }Q_T.
    \end{align*}
    Since $w \mapsto e^{\frac{1}{5}w}$ is a strictly increasing function, the above is only possible if
    \begin{align*}
        \ln(\theta_k) \rightarrow \overline{\ln\theta} \text{ a.e. in }Q_T,
    \end{align*}
    which in turn implies that
    \begin{align}\label{almost_everywhere_convergence_theta_n}
        \theta_k \rightarrow \theta := e^{\overline{\ln\theta}} \text{ a.e. in }Q_T.
    \end{align}

\normalsize\subsubsection{Compactness of \texorpdfstring{$\{\bF_k\}_{k\in \N}$}{Fk} in \texorpdfstring{$(L^2(Q_T))^{3\times 3}$}{L2}}\label{subsection:strong_convergence_of_F}
The proof follows the scheme introduced in \cite{Mas} or \cite{BuMaLo24}. Formally, the evolutionary equation for $\bF_k$ is multiplied scalarly by $\bF_k$, the limited equation for $\bF$ is multiplied scalarly by $\bF$ and we estimate the difference $|\bF_k|^2-|\bF|^2$. Note that the sequence $\{\bv_k, |\bF_k|\}_{k\in \N}$ is uniformly bounded in $ L^2(0,T;W_{\bn,\diver}^{1,2})\times L^4(Q_T)$. For the quantity $\overline{|\bF|^2}-|\bF|^2$ and an arbitrary non-negative $\varphi\in C_c^\infty((-\infty,T)\times\Omega)$, we aim to get the following Gronwall type inequality 
\begin{equation}
    \label{Gronwall}
    -\int_{Q_T} (\overline{|\bF|^2}-|\bF|^2) \partial_t \varphi-\int_{Q_T} (\overline{|\bF|^2}-|\bF|^2)\bv \cdot \nabla \varphi\leq \int_{Q_T} L(\overline{|\bF|^2}-|\bF|^2)\varphi,
\end{equation}
where $L$ is an $L^2(Q_T)$ function specified bellow. Then we apply Proposition~1.5 from \cite{BuMaLo24} and conclude that $\overline{|\bF|^2}-|\bF|^2=0$ almost everywhere in $Q_T$, which is equivalent to the compactness of $\{\bF_k\}_{k\in \N}$ in $(L^2(Q_T))^{3\times3}$, see \cite{BuMaLo24}. 

Let us note that, while deriving \eqref{Gronwall}, the most complicated terms to handle are $\int_{Q_T}\nabla \bv_k: \bF_k \bF_k^T \varphi$, acting in the equation \eqref{Fev} with $\bF_k$ in the role of $\bF$ multiplied scalarly by $\varphi \bF_k$ and integrated over $Q_T$, and   $\int_{Q_T} \overline{(\nabla \bv) \bF}:\bF \varphi$, acting in the mollified (by the standard mollifying kernel $\omega_{\hat{\delta}}$, $\hat{\delta}>0$, for more details see \cite{BuMaLo22} or \cite{BuMaLo24}) form of \eqref{Fincomp}  with $\bA$ being equal to the mollification of $\varphi \bF$, where in the equation with the mollified terms the limit in the mollifying parameter $\hat{\delta}\to 0+$ is taken. To treat the mentioned terms, we shall employ, among other things, the evolutionary equation for $\bv_k$ multiplied scalarly by $\bv_k-\bv$. 
However, in this equation, the convective term 
$(\bv_k-\bv)\diver(\bv_k\otimes \bv_k)$ is not uniformly bounded in $(L^1(Q_T))^3$. To avoid this obstacle we regularize the equation for $\bF_k$ by $G_m(|\bF_k|^2)$, where $G_m(s)$ is a cut-off function defined, for $m\in \N$, as
\begin{equation}
    \label{defGm}
    G_m(s)= G(s/m), \quad G(s)=\left\{\begin{matrix}1& \mbox{if } |s|\leq 1\\
    0& \mbox{if } |s|>2\end{matrix}\right.,\quad G\in C^\infty(\R)
\end{equation}
such that $G$ is nonincreasing in $[0,\infty)$, 
and in the equation for $\bv_k$ we use the tools connected with the existence of biting limits, namely Propositions \ref{biting} and~\ref{bitingT}. Note that such a procedure was already used in the analysis of the Giesekus model in the isothermal three-dimensional setting, see \cite{BuMaLo24}.

\medskip

The proof of the compactness of $\{\bF_k\}_{k\in \N}$ in $(L^2(Q_T))^{3\times3}$ consists of two main steps. 

\textbf{Step 1.}
Subtracting the mollified form of \eqref{Fincomp} with $\bA$ being the mollification of $\varphi \bF$ from \eqref{Fev} with $\bF_k$ in the role of $\bF$ multiplied scalarly by $\varphi G_m(|\bF_k|^2) \bF_k$ and integrated over $Q_T$, and taking (besides the limit in the mollifying parameter) the limits $k\to\infty$ and $m\to \infty$, we shall get, after certain arrangements, the following inequality, valid for all non-negative  $\varphi\in \mathcal{C}_c^\infty((-\infty,T)\times\Omega)$:
\begin{equation}
    \label{step1compFsigma}
    \begin{split}
    -&\int_{Q_T} \left(\overline{|\bF|^2}-|\bF|^2\right)\partial_t \varphi - \int_{Q_T} \left(\overline{|\bF|^2}-|\bF|^2\right) \bv\cdot \nabla \varphi \\
    &\qquad \leq \int_{Q_T} \left(\overline{|\bF|^2}-|\bF|^2\right)\varphi-\ell_{1}+2\ell_{2},
\end{split}
\end{equation}
where
\begin{align}
\label{lb}
 \ell_{1}&:= \lim_{m\to \infty}\lim_{k\to \infty} \int_{Q_T} \left(|\bF_k \bF_k^T|^2 G_m(|\bF_k|^2)-\bF_k\bF_k^T\bF_k:\bF\right)\varphi,
    \\
\label{la}
    \ell_{2}&:= \lim_{m\to \infty}\lim_{k\to \infty} \int_{Q_T} \left(\nabla \bv_k: G_m(|\bF_k|^2) \bF_k \bF_k^T- \nabla\bv_k\bF_k:\bF\right)\varphi.
\end{align}

\textbf{Step 2.}
We shall prove that $\ell_{1}\ge 0$ and
\begin{equation}
\label{step2compFsigma}
    \ell_{2}\leq \int_{Q_T} \tilde{L} \left(\overline{|\bF|^2}-|\bF|^2\right)\varphi
\end{equation}
for all non-negative  $\varphi\in \mathcal{C}_c^\infty((-\infty,T)\times\Omega)$,
where $\tilde{L}$ is an $L^2(Q_T)$ function specified below. This, together with the results from Step 1, implies the validity of \eqref{Gronwall}, and thus completes the proof of the compactness of $\{\bF_k\}_{k\in \N}$ in $(L^2(Q_T))^{3\times 3}$, as clarified in the beginning of this subsection.

\medskip

We concentrate just on the proof of \eqref{step2compFsigma} since the previous results can be derived following step by step the proofs of the corresponding properties in the isothermal setting, see \cite{BuMaLo24}, Proof of the compactness of $\{\bF_k\}_{k\in \N}$ in $(L^2(Q_T))^{3\times 3}$, Steps 1a, 1b, 2a.

\begin{proof}[\textbf{Proof of \eqref{step2compFsigma}.}]
    First we decompose $l_2$ into three parts $L_1+L_2+L_3,$
    where
        \begin{align}
    L_1&:= \lim_{m\to \infty} \lim_{k\to \infty}\int_{Q_T} \left(\nabla \bv_k: G_m(|\bF_k|^2)\bF_k \bF_k^T-\nabla \bv:\bF_k \bF_k^T\right)\varphi,\label{L1}\\
    L_2&:= \lim_{k\to\infty}\int_{Q_T} \left(\nabla \bv:(\bF_k \bF_k^T - \bF \bF^T)\right)\varphi,\label{L2}\\
    L_3&:= \lim_{k\to\infty}\int_{Q_T} \left(\nabla \bv\bF:\bF- \nabla \bv_k \bF_k: \bF\right)\varphi.\label{L3}
\end{align}
For $L_2$, due to \eqref{Fkweak} and the fact that $\nabla \bv\in (L^2(Q_T))^{3\times 3}$, there holds
\begin{equation}
    \label{estL2} L_2\leq \lim_{k\to \infty} \int_{Q_T} |\nabla \bv| |\bF_k-\bF|^2 \varphi=\int_{Q_T} |\nabla \bv| \left(\overline{|\bF|^2}-|\bF|^2\right)\varphi.
\end{equation}

To treat $L_1$ and $L_3$, let us first decompose $\bv_k:= \bv_k^1+\bv_k^2$, where $\bv_k^1\in L^2(0,T; W_{\bn, \diver}^{1,2})$ solves, for all $\bw\in W_{\bn,\diver}^{1,2}$ and almost all $t\in (0,T)$, the problem
\begin{equation}
    \label{stokes1}
    \langle\partial_t \bv_k^1, \bw\rangle+\int_{\Omega} \nabla \bv_k^1: \nabla \bw+\int_{\partial \Omega} \bv_{k_\tau}^1\cdot \bw_{\tau}= -\int_{\Omega} g(\theta_k)\bF_k \bF_k^T:\nabla \bw
\end{equation}
with the initial condition $\bv_k^1(0, \vec{\cdot})= \bv_0$ almost everywhere in $\Omega$. Since $\bv_k^2=\bv_k-\bv_k^1$, we know that $\bv_k^2\in L^2(0,T; W_{\bn, \diver}^{1,2})$ solves, for all $\bw\in W_{\bn, \diver}^{1,2}$ and a.a. $t\in (0,T)$, the problem
\begin{equation}
    \label{stokes2}
    \langle\partial_t \bv_k^2, \bw\rangle+\int_{\Omega} \nabla \bv_k^2: \nabla \bw+\int_{\partial \Omega} \bv_{k_\tau}^2\cdot \bw_{\tau}= \int_{\Omega} (\bv_k\otimes \bv_k):\nabla \bw
\end{equation}
with the initial condition $\bv_k^2(0,\vec{\cdot})=\vec{0}$ almost everywhere in $\Omega$. 
Since $\{\bF_k \bF_k^T\}_{k\in \N}$ is uniformly bounded in $(L^2(Q_T))^{3\times3}$, the standard theory of Stokes problems applied on the equation \eqref{stokes1} gives the existence of $\bv^1$ such that (for suitable not relabeled subsequence of $\{\bv_k^1\}_{k\in \N}$) 
\begin{align}
    \label{vk1weak}\bv_k^1 & \rightharpoonup \bv^1 && \mbox{weakly in } L^2(0,T; W_{\bn, \diver}^{1,2}),\\
    \label{dervk1weak}\partial_t \bv_k^1 & \rightharpoonup \partial_t\bv^1 && \mbox{weakly in } L^2(0,T; (W_{\bn, \diver}^{1,2})^*).
\end{align}
Due to \eqref{vkweak}, \eqref{dervkweak} and the Aubin-Lions compactness lemma, we have that
\begin{equation}
    \label{vkstrong}
    \bv_k \to \bv \quad \mbox{ strongly in } (L^{\frac{10}{3})}(Q_T))^3,
\end{equation}
thus the standard theory of Stokes problems (together with \eqref{vkweak} and \eqref{vk1weak}) applied on the equation \eqref{stokes2} gives the existence of $\bv^2$ satisfying (for suitable not relabeled subsequence of $\{\bv_k^2\}_{k\in \N}$)
\begin{align}
\label{vk2weak}\bv_k^2 & \rightharpoonup \bv^2 && \mbox{weakly in } L^2(0,T; W_{\bn, \diver}^{1,2}),\\
    \label{nablavk2strong}\nabla \bv_k^2 & \to \nabla\bv^2 && \mbox{strongly in } (L^{2)}(Q_T))^{3\times 3}.
\end{align}
We shall deal with biting limits of terms involving $\nabla \bv^1_k$. We define the 
sequence $\{b_k\}_{k\in \N}$ as
\begin{equation}\label{CISLO}
b_k:= |\nabla \bv_k^1|^2+|\bF_k|^4. 
\end{equation}
Thanks to \eqref{est2} and \eqref{vk1weak}, the sequence $\{b_k\}_{k\in \N}$ is uniformly bounded in $L^1(Q_T)$. 
Hence, Chacon's biting lemma (Proposition~\ref{biting}) and the characterisation of $L^1$-weakly converging sequences imply, for a suitable not relabeled subsequence of $\{b_k\}_{k\in \N}$, the existence of a nondecreasing sequence of measurable sets $\{E_j\subset Q_T\}_{j\in \N}$ with the property $|Q_T \setminus E_j|\to 0$ as $j\to \infty$ and satisfying that, for any $j\in \N$ and any $\varepsilon>0$, there exists $\tilde{\delta}>0$ such that, for any measurable set $U\subset E_j$ with $|U|<\tilde{\delta}$, there holds
\begin{equation}
\sup_{k\in \N} \int_{U} b_k \le \varepsilon.\label{EQI}
\end{equation}
Now we decompose the terms $L_1$ and $L_3$ in the following way:
\begin{equation}
\label{L13dec}
L_1=L_1^{1}+L_1^{2}, \qquad L_3=L_3^{1}+L_3^{2},
\end{equation}
where 
\begin{align*}
    &L_1^{1}:= \lim_{m\to \infty} \lim_{k\to \infty}\int_{E_j} \left(\nabla \bv^1_k: G_m(|\bF_k|^2)\bF_k \bF_k^T-\nabla \bv^1:\bF_k \bF_k^T\right)\varphi,\\
    &L_3^{1}:= \lim_{k\to\infty}\int_{E_j} \left(\nabla \bv^1\bF:\bF- \nabla \bv^1_k \bF_k: \bF\right)\varphi.
\end{align*}

We start with a suitable identification of $L_1^1$. We first realize that ($C$ depends on the data)
\begin{equation}
    \label{Gmn1}\left|\{G_m (|\bF_k|^2)\neq 1\}\right|\leq \left|\{|\bF_k|^2\geq m\}\right|\leq \int_{\{|\bF_k|^4\geq m^2\}} \frac{|\bF_k|^4}{m^2}\leq \frac{C}{m^2},\end{equation}
    which, due to the uniform equi-integrability of $\{b_k\}_{k\in \N}$ on $E_j$ (see \eqref{EQI}), the Young inequality and the fact  that $G_m$ is bounded, enables to rewrite $L_1^1$ as
    \begin{equation}
        \label{L11eq} L_1^1= \lim_{k\to \infty} \int_{E_j} (\nabla \bv_k^1-\nabla \bv^1): \bF_k \bF_k^T \varphi.  
    \end{equation}
    From the uniform equi-integrability of $\{b_k\}_{k\in \N}$ on $E_j$ and the boundedness of $g$ it also follows that the sequence 
    $\{\tilde{\bG}_k:\nabla \bv_k^1\}_{k\in \N}$, where $$\tilde{\bG}_k:= g(\theta_k) \bF_k \bF_k^T+\bD_k^1,$$ is uniformly equi-integrable on $E_j$ as well. Hence, due to the properties of $E_j$, the convergence results \eqref{vk1weak} and \eqref{dervk1weak}, the weak convergence of (a subsequence of) $\{\bG_k\}_{k\in \N}$ in $(L^2(Q_T))^{3\times3}$, the equation \eqref{stokes1} and Proposition \ref{bitingT}, we have, for an appropiate not relabeled subsequence, that
    \begin{equation}
        \tilde{\bG}_k: \nabla \bv_k^1 \rightharpoonup \tilde{\bG}:\nabla \bv^1\quad \mbox{ weakly in } L^1(E_j), 
    \end{equation}
    and thus, for all $\psi\in L^\infty(Q_T)$ (and a further not relabeled subsequence), there holds
    \begin{equation}
    \label{L11key}
        \lim_{k\to \infty} \int_{E_j} (\nabla \bv_k^1-\nabla \bv^1): g(\theta_k) \bF_k \bF_k^T \psi=-\lim_{k\to \infty} \int_{E_j} |\bD_k^1-\bD^1|^2\psi.
    \end{equation}
    Next, since $\theta_k\to \theta$ almost everywhere in $Q_T$ and $\{\ln \theta_k\}_{k\in \N}$ is uniformly bounded in $L^1(Q_T)$, which follows from \eqref{est3a0} (with $\theta_k$ in the role of $\theta$), by the Fatou lemma it holds that (recall that $\delta$ is defined in \eqref{growthg01}, use the convention $\ln 0=-\infty$)
    \small\begin{equation}
        \sup_{t\in (0,T)} \int_{\Omega}|\ln \theta(t)|
        \leq \sup_{t\in (0,T)}\liminf_{k\to \infty}\int_{\Omega}|\ln \theta_k(t)|\leq \tilde{C}(\delta, T, \Omega, g, \bv_0, \bB_0, \theta_0),
    \end{equation}
    \normalsize which directly gives that $\theta>0$ almost everywhere in $Q_T$ and that $\ln \theta\in L^1(Q_T)$. Recall (from the footnote on page 9) that $g(\theta)>0$ whenever $\theta>0$. Hence, due to the  continuity of $g$, we have
    \begin{equation}
    \label{ae1lt}\frac{1}{g(\theta_k)}\to \frac{1}{g(\theta)}\quad \mbox{ a.e. in } Q_T.\end{equation} 
    Moreover, thanks to the property that if $g(\theta)\to 0+$ as $\theta\to 0+$, then $g(\theta)/\theta\to g'(0)>0$ (see again the footnote on page 9 and recall that $g$ is nondecreasing, continuous and bounded), one can deduce, for any $M\in (0,\infty)$ and some finite positive $\varepsilon=\varepsilon(g,M)$, that (in the first inequality we use again the Fatou lemma)
    \small\begin{equation*}
    \begin{split}
        &\int_{Q_T} |\ln g(\theta)|\leq \sup_{k\in \N} \int_{Q_T}|\ln g(\theta_k)|=\sup_{k\in \N} \int_{\{\theta_k\leq M\}} |\ln g(\theta_k)|+\int_{\{\theta_k> M\}}|\ln g(\theta_k)|\\ &\qquad\leq \sup_{k\in \N}\int_{\{\theta_k\leq M\}} \left|\ln \left(\frac{g(\theta_k)}{\theta_k}\right)+\ln \theta_k\right|+|\{\theta_k>M\}| (|\ln g(M)|+ |\ln g_\infty|)\\ &\qquad \leq \left\{\begin{matrix}
            \begin{split}
                \sup_{k\in N}\int_{\{\theta_k\leq M\}} |\ln \theta_k|+|\{\theta_k\leq M\}|(|\ln g'(0)|+\varepsilon(g,M))\\ \quad+|\{\theta_k>M\}| (|\ln g(M)|+ |\ln g_\infty|)\end{split}& \mbox{ if } g(0)=0
                \\ \ \ 
                \\
            |Q_T| (|\ln g(0)|+ |\ln g_\infty|+ |\ln g(M)|)& \mbox{ if } g(0)>0
        \end{matrix}\right.,    
    \end{split}
    \end{equation*} \normalsize
    which, thanks to \eqref{est3a0} and the properties of $g$, implies that $\ln g(\theta)\in L^1(Q_T)$ and $\{\ln g(\theta_k)\}_{k\in \N}$ is uniformly bounded in $L^1(Q_T)$. 
   Due to these facts we see that $\sup_{k\in \N}|\{g(\theta_k)\leq \frac{1}{l}\}|\to 0$ and $|\{g(\theta)\leq \frac{1}{l}\}|\to 0$ as $l\to \infty$. 
    Hence, the Egoroff theorem applied on \eqref{ae1lt} then implies that there exists a non-decreasing sequence of measurable sets $\{\mathcal{E}_l\}_{l\in \N}$ such that $|Q_T\setminus\mathcal{E}_l|\to 0$ as $l\to \infty$ and
    \begin{equation}
    \label{1ltunif}
        \frac{1}{g(\theta_k)} \to \frac{1}{g(\theta)} \quad \mbox{ strongly in } L^\infty(\mathcal{E}_l).
    \end{equation}
    Combining \eqref{L11key} and \eqref{1ltunif}, we obtain, for all $\varphi\in C_c^\infty((-\infty,T)\times \Omega)$, that
    \begin{equation}
        \label{L11key2}
        \lim_{k\to \infty} \int_{E_j\cap \mathcal{E}_l} (\nabla \bv_k^1-\nabla \bv^1): \bF_k \bF_k^T \varphi= -\lim_{k\to \infty} \int_{E_j\cap \mathcal{E}_l}\frac{|\bD^1_k-\bD^1|^2}{g(\theta)}\varphi.
    \end{equation}
    Hence, due to the uniform equi-integrability of the integrand on the left-hand side of \eqref{L11key2} and also of $|\bD_k^1-\bD^1|^2$ on $E_j$ (see \eqref{CISLO} and \eqref{EQI}), \eqref{L11eq}, and the properties of $\mathcal{E}_l$, we  have, for all non-negative $\varphi\in C_c^\infty((-\infty,T)\times\Omega)$, that
    \small\begin{equation}
        \begin{split}
        \label{L11fin}
        &L_1^1= \lim_{l\to \infty} \lim_{k\to \infty} \int_{E_j\cap \mathcal{E}_l} (\nabla \bv_k^1-\nabla \bv^1): \bF_k \bF_k^T \varphi \\&\qquad\leq -\lim_{l\to \infty} \lim_{k\to \infty} \int_{E_j\cap \mathcal{E}_l}\frac{|\bD^1_k-\bD^1|^2\varphi}{R}
        = -\lim_{k\to\infty}\int_{E_j} \frac{|\bD^1_k-\bD^1|^2\varphi}{R},\end{split}
    \end{equation}
    \normalsize
    where $R:= \sup_{\theta>0} \{g(\theta)\}\in (0,\infty)$.

    Next, we introduce an appropriate estimate of $L_3^1$. Using \eqref{vk1weak}, \eqref{Fkweak}, Young's inequality and the localised form of Korn's inequality (see~\cite{BuMaLo22}, Step 2 of the proof of the compactness of $\{\bF_\varepsilon\}$), we have that 
\begin{equation}
    \label{L3j1est}
    \begin{aligned}
&L_3^{1}= \lim_{k\to\infty}\int_{E_j}\left((\nabla\bv^1-\nabla \bv^1_k) (\bF_k-\bF): \bF\right)\varphi
\\&\qquad\le \lim_{k\to\infty}\int_{E_j} \left(\frac{1}{R}|\bD^1-\bD^1_k|^2 + \frac{R}{2}|\bF_k-\bF|^2|\bF|^2\right)\varphi\\&\qquad\le\lim_{k\to \infty} \int_{E_j}\frac{1}{R}|\bD^1-\bD^1_k|^2 \varphi+\int_{Q_T}\frac{R}{2}|\bF|^2(\overline{|\bF|^2}-|\bF|^2)\varphi.
\end{aligned}
\end{equation}
Summing \eqref{L11fin} with \eqref{L3j1est}, we obtain that
\begin{equation}
\label{compFkkey} L_1^1+L_3^1\leq \int_{Q_T} \frac{R}{2} |\bF|^2 (\overline{|\bF|^2}-|\bF|^2)\varphi
\end{equation}
with $R:= \sup_{\theta>0} \{g(\theta)\}$.

Last, we briefly show that 
\begin{equation}
    \label{compFkeyrep} \lim_{j\to \infty} L_1^2=\lim_{j\to \infty} L_3^2=0.
\end{equation}
From \eqref{L1} and \eqref{L13dec} it follows that
\begin{equation}\begin{split}
    \label{L12} L_1^2=\lim_{m\to \infty} \lim_{k\to \infty}\int_{Q_T\setminus E_j }  \left(\nabla \bv^1_k: G_m(|\bF_k|^2)\bF_k \bF_k^T-\nabla \bv^1:\bF_k \bF_k^T\right)\varphi\\
    + \int_{Q_T} \left(\nabla \bv^2_k: G_m(|\bF_k|^2)\bF_k \bF_k^T-\nabla \bv^2:\bF_k \bF_k^T\right)\varphi.
    \end{split}
\end{equation}
The sequence $\{|\nabla \bv^1_k: G_m(|\bF_k|^2)\bF_k \bF_k^T|\}_{k,m\in \N}$ is bounded in $L^1(Q_T)$ thanks to \eqref{vk1weak} and \eqref{est2}. Moreover, due to the properties of $G$ the sequence is bounded uniformly with respect to $k\in \N$ in $L^2(Q_T)$ and  it is nondecreasing in $m$ almost everywhere in $Q_T$. Hence, for any fixed $m\in \N$ and a suitable not relabeled subsequence, there exists
$\overline{|\nabla \bv^1: G_m(|\bF|^2) \bF \bF^T|}$ satisfying as $k\to\infty$
\begin{equation}
    |\nabla \bv_k^1: G_m(|\bF_k|^2)\bF_k \bF_k^T| \rightharpoonup \overline{|\nabla \bv^1: G_m(|\bF|^2) \bF \bF^T|}\quad \mbox{ weakly in } L^2(Q_T)
\end{equation}
and ($l, n\in \N$)
$$\overline{|\nabla \bv^1: G_l(|\bF|^2) \bF \bF^T|}\leq \overline{|\nabla \bv^1: G_n(|\bF|^2) \bF \bF^T|} \quad \mbox{ a.e. in } Q_T \quad\mbox{ whenever } l\leq n.$$
By the monotone convergence theorem we then obtain the existence of the element $\overline{|\nabla \bv^1: G_\infty(|\bF|^2) \bF \bF^T|}$ satisfying as $m\to \infty$
$$\overline{|\nabla \bv^1: G_m(|\bF|^2) \bF \bF^T|}\to\overline{|\nabla \bv^1: G_\infty(|\bF|^2) \bF \bF^T|}\quad \mbox{ strongly in } L^1(Q_T).$$
The modulus of the limit as $k\to\infty$ and $m\to\infty$ of the first integral in \eqref{L12} is thus less than or equal to 
\begin{equation}\label{estL12}
    \int_{Q_T\setminus E_j} \left(\overline{|\nabla \bv^1: G_\infty(|\bF|^2) \bF \bF^T|}+|\nabla \bv^1:\overline{\bF \bF^T}|\right)\varphi.
\end{equation}
Since the integrand in \eqref{estL12} belongs to $L^1(Q_T)$ and $|Q_T\setminus E_j|\to 0$ as $j\to \infty$, we observe that the limit as $k\to \infty$, $m\to \infty$ and $j\to \infty$ of the first integral in \eqref{L12} is equal to zero. Moreover, from \eqref{Fkweak}, \eqref{vk2weak}, \eqref{nablavk2strong},   \eqref{Gmn1} and the definition of $G_m$, one can deduce that
\begin{equation}
    \label{L12concl}
    \begin{split}
    \lim_{m\to \infty} \lim_{k\to \infty}\int_{Q_T} \left(\nabla \bv^2_k: G_m(|\bF_k|^2)\bF_k \bF_k^T-\nabla \bv^2:\bF_k \bF_k^T\right)\varphi\\ \leq \lim_{m\to\infty}\left(2m \lim_{k\to \infty} \int_{Q_T} |\nabla \bv^2_k-\nabla \bv^2| \varphi\right) = 0,
    \end{split}
\end{equation}
i.e. the limit as $k\to \infty$ and $m\to \infty$ of the second integral in \eqref{L12} is also equal to zero. To conclude \eqref{compFkeyrep} it remains to prove that
$$\lim_{j\to \infty} L_3^2=0.$$
However, since $L_3^2$ is determined by \eqref{L3} and \eqref{L13dec}, it follows straightforwardly from \eqref{Fkweak}, \eqref{vk1weak}, \eqref{vk2weak}, \eqref{nablavk2strong} and the property $|Q_T\setminus E_j|\to 0$ as $j\to \infty$. Hence \eqref{compFkeyrep} holds true. Note that the procedure how to prove \eqref{compFkeyrep} is in detail described in \cite{BuMaLo24}.

To conclude, since $l_2=L_1+L_2+L_3$, we see, using \eqref{estL2}, \eqref{L13dec}, \eqref{compFkkey} and \eqref{compFkeyrep}, that \eqref{step2compFsigma} holds with 
$$\tilde{L}:= 1+2|\nabla \bv|+R|\bF|^2\in L^2(Q_T),$$
where $R:=\sup_{\theta>0} g(\theta)$. This completes the proof of the compactness of the sequence $\{\bF_k\}_{k\in \N}$ in $(L^2(Q_T))^{3\times 3}$.
\end{proof}

\section{Existence of weak solutions}\label{Sec4}
This section is devoted to the proof of the main result of the paper, namely the global-in-time existence result for large data. Before presenting the proof, we first state the result precisely. We consider here only the three-dimensional case; however, the argument extends to other dimensions with only minor modifications.

\begin{thm}\label{THM2}
   Let the material parameters satisfy \eqref{estnu}--\eqref{growthg01}. Assume that  
\begin{equation}\label{initCCCA}
    \begin{aligned}
       \bv_0\in L^{2}_{\bn, \diver},\quad \bF_0 \in  L^2(\Omega)^{3\times 3}, \quad \theta_0\in L^1(\Omega), \quad \det \bF_0 \in L^1(\Omega)
    \end{aligned}
\end{equation}
and that $\det \bF_0 >0$ almost everywhere in $\Omega$. Then there exists $(\bv,\theta,\bF)$ satisfying \eqref{vincompT}--\eqref{aatt}.
\end{thm}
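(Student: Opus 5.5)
The plan is to reduce Theorem~\ref{THM2} to the weak sequential stability of Theorem~\ref{THM1}. We build a family of approximate problems that admit smooth (or at least sufficiently regular) solutions. Each approximation must satisfy the a priori estimates of Proposition~\ref{propest} and Corollary~\ref{corpropest} uniformly, and the approximations of the initial data must converge in the sense of \eqref{initCCC}.

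\textbf{Step 1: regularizing the data.} Approximate the initial data $\bv_0,\bF_0,\theta_0$ by smooth $\bv_{0,k}$, $\bF_{0,k}$, $\theta_{0,k}$ with $\theta_{0,k}\ge 1/k$. We take $\bF_{0,k}$ such that $\det\bF_{0,k}\ge c_k>0$, for instance by mollifying $\bF_0$ and adding $k^{-1}$ times a suitable rotation-compatible perturbation, or by truncating the singular values of $\bF_0$ from below. This construction has to give $e(\theta_{0,k},\bB_{0,k})\to e(\theta_0,\bB_0)$ and $\eta(\theta_{0,k},\bB_{0,k})\to\eta(\theta_0,\bB_0)$ in $L^1$. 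Vitali's theorem should suffice here, using $\ln\det\bB_0\in L^1$ (guaranteed by the finiteness of the initial entropy) and a monotone truncation of the singular values.

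\textbf{Step 2: regularizing the equations and finding regular solutions.} For fixed $k$ we regularize the system so that classical (or strong) solutions exist globally. A natural choice is to add stress diffusion $-\varepsilon\Delta\bF$ in \eqref{Fev}, a higher-order term $\varepsilon(-\Delta)^s\bv$ in \eqref{2tBurg}, and a cut-off/mollification of the convective velocity $\bv$ in the transport terms. Crucially, the regularization must keep the energy and entropy structure: the added terms must produce extra nonnegative dissipation both in the total energy balance and in the rescaled entropy identity \eqref{evetalambda}. For this reason the diffusion is added to the equation for $\bF$ in the form that yields a nonnegative term after testing with $\bF-\bF^{-T}$, and the corresponding dissipative heat is inserted into the temperature equation. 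At fixed $\varepsilon$, solutions are obtained by a Galerkin scheme for $\bv$ and $\bF$ combined with parabolic theory for $\theta$. Positivity of $\theta$ follows from the minimum principle, since the right-hand side of \eqref{4tBurg} is nonnegative after the regularization. Positivity of $\det\bF$ follows from \eqref{evlndetB}, which at the regularized level becomes a parabolic equation for $\ln\det\bB$ with a bounded source.

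\textbf{Step 3: passage to the limit.} Check that the proofs of Proposition~\ref{propest}, Corollary~\ref{corpropest} and of Section~\ref{wss} go through with $\varepsilon$-independent bounds. The regularizing terms only add signed dissipation, and their remainders vanish in the limit because they are controlled by $\varepsilon\|\nabla\bF\|_2^2$. Then apply the argument of Theorem~\ref{THM1}, now with $\varepsilon\to0$ and $k\to\infty$ simultaneously. The a.e. convergence of $\theta_k$ via the div-curl lemma and the compactness of $\bF_k$ via \eqref{Gronwall} give \eqref{vincompT}--\eqref{FincompT}. The entropy inequality \eqref{dfneq} then follows by weak lower semicontinuity of the convex dissipative terms, and the energy inequality \eqref{dfnEq} follows from Fatou's lemma. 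Attainment of the initial data \eqref{aatt} is obtained from weak continuity in time combined with the energy and entropy inequalities and the strong convergence of the data, in the standard way.

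\textbf{Main obstacle.} I expect the hardest step to be Step~2, together with the compatibility of the regularization with the compactness argument for $\bF$. Stress diffusion destroys the identity relating \eqref{Fev} to \eqref{3tBurg} and \eqref{evlndetB} exactly, and it produces the commutator-type term $\varepsilon\nabla\bF:\nabla(\bF^{-T})$, whose sign must be checked. I would first try diffusion acting on $\bB$ combined with a renormalized form, or alternatively rely on the known isothermal construction from \cite{BuMaLo24}, adapted to the temperature coupling.
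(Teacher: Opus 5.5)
Reducing everything to the stability result of Theorem~\ref{THM1} is the right frame, and your ingredients (Galerkin, stress diffusion on $\bF$, cut-off convection) are the paper's. But the proof lives in Step~2, which you leave open, and the concrete claims you make there fail.

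First, the right-hand side of \eqref{4tBurg} is $\bT:\bD=2\nu|\bD|^2+2g(\theta)\bB:\bD$. The elastic power $\bB:\bD$ has no sign, and $e$ depends on $\bB$, so a minimum principle for $e$ says nothing about $\theta$. The paper gets $\theta\ge\min\{\varepsilon_1,\varepsilon_6\}$ (Step~3) only after two modifications:
\begin{itemize}
\item it multiplies both the elastic stress and the stretching term $\nabla\bv\bF$ by $(\theta-\varepsilon_6)_+/\theta$, which keeps the energy balance and gives $\bT:\bD=\nu|\bD|^2\ge0$ where $\theta<\varepsilon_6$;
\item it replaces $g$ by $g_{\varepsilon_1}$, linear near $0$, so that $e=\theta$ for small $\theta$.
\end{itemize}
Second, with $-\varepsilon\Delta\bF$ added, testing by $\bF^{-T}$ gives
\[
\partial_t\ln\det\bF+\bv\cdot\nabla\ln\det\bF-\varepsilon\Delta\ln\det\bF=\varepsilon\sum_i\mathrm{tr}\big((\bF^{-1}\partial_i\bF)^2\big)-\frac{\tau(\theta)}{2}\big(|\bF|^2-3\big).
\]
The right-hand side is not bounded from below: $\mathrm{tr}(M^2)=|M^{\mathrm{sym}}|^2-|M^{\mathrm{skew}}|^2$ involves $\bF^{-1}$, and $|\bF|^2$ is unbounded unless $\bF$ is cut off. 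So there is no minimum principle for $\det\bF$. The paper never needs $\det\bF>0$ while diffusion is on. It uses the truncated $\tilde\psi_{\varepsilon_2}$ and switches the relaxation off via $(\det\bF-\varepsilon_5)_+/\det\bF$. It proves $\det\bF\ge\varepsilon_5$ only in Step~7, after $\varepsilon_4\to0$, when $(\det\bF-\varepsilon_5)_-$ is merely transported.

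Third, and this is the missing idea: the stress-diffusion contributions to the entropy and $\lambda$-entropy balances are not a signed dissipation. The term $\nabla\bF:\nabla\big(\tfrac{g(\theta)}{\theta}(\bF-\bF^{-T})\big)$ contains $|\nabla\bF|^2+\sum_i\mathrm{tr}((\bF^{-1}\partial_i\bF)^2)$, which is negative for $\bF$ near $\delta\bI$, $\delta<1$, with skew $\bF^{-1}\partial_i\bF$. It also contains cross terms $\nabla\theta\cdot\nabla\bF$. So inserting the corresponding heat into the temperature equation does not give a consistent structure. Your claim that the remainders vanish because they are controlled by $\varepsilon\|\nabla\bF\|_2^2$ is unfounded, since the energy estimate only yields $\varepsilon\|\nabla\bF\|_2^2\le C$.

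The paper instead adds no extra heat and builds a hierarchy of truncations:
\begin{itemize}
\item the cut-off $\Lambda_{\varepsilon_3}(|\bF|)$ on the stretching term gives $\bF\in L^\infty$ by a maximum-principle argument;
\item this, together with $\theta\ge\min\{\varepsilon_1,\varepsilon_6\}$ and bounded $\tilde\psi'_{\varepsilon_2}$, bounds all $\varepsilon_4$-terms by $C(\varepsilon_1,\varepsilon_2,\varepsilon_3,\varepsilon_6)\,\varepsilon_4\|\nabla\bF\|_2^2$ plus absorbable terms;
\item the diffusion is then removed first, and strong $L^2$ convergence of $\bF^{\varepsilon_4}$ is proved by the argument of Subsection~\ref{subsection:strong_convergence_of_F}, which is easier with the $L^\infty$ bound;
\item comparing the $\bF$-energy identities before and after the limit (the limit equation is testable by $\bF$ via the commutator lemma) gives $\varepsilon_4\|\nabla\bF^{\varepsilon_4}\|_2^2\to0$, which kills $T^{\varepsilon_4}$.
\end{itemize}
Every constant degenerates with the truncation parameters, so the limits must be taken in this order. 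The entropy balances survive only with nonnegative defect measures (see \eqref{eq:step6_entropy}), which is all \eqref{dfneq} requires. A simultaneous limit $\varepsilon\to0$, $k\to\infty$ with uniform constants, as in your Step~3, is not available. Deferring to the isothermal construction of \cite{BuMaLo24} does not close the gap, because the temperature coupling is exactly what is new.
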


In the present section, we shall make use of the already proven weak stability result and show the approximation procedure, which gives the existence of weak solutions. We want to remark, that the proof is already quite extensive with many approximating steps, therefore as to not make it even longer we shall assume some preliminary knowledge of the reader in the construction of approximating schemes and obtaining the energy estimates. Without going into much details we will often use the renormalization theory introduced by DiPerna and Lions \cite{DiPerna1989}, to be able to properly test equations even without the proper regularity of the test function. We will also often assume that the reader understands how to apply a mix of weak and strong convergence results to converge in the weak formulations of equations in linear and non-linear terms. Let us begin our discussion with the introduction of an approximation procedure. Denote by $\varepsilon_1, \varepsilon_2, \varepsilon_3, \varepsilon_4, \varepsilon_5,\varepsilon_6, \varepsilon_7\in (0, 1)$ some small constants, that will converge to $0^+$ throughout the approximation. Fix $\{\omega_i\}_{i\in\N}$, $\{\Gamma_i\}_{i\in\N}$ and $\{\beta_i\}_{i\in\N}$ as the standard Galerkin basis', which are orthonormal in $L^2$ and orthogonal in $W^{s, 2}_{\bn, \diver}(\Omega, \R^3)$, $W^{s,2}(\Omega)$, $W^{s,2}(\Omega; \R^{3\times 3})$ respectively, for $s$ large enough so that
\begin{align*}
    W^{s-1, 2}\hookrightarrow L^\infty .
\end{align*}
Note that we also fix the appropriate boundary conditions for them which are related to the variables $\bv, e, \mathbb{F}$, that is
$$
\bv\cdot\bn = 0, \quad\nabla_x\bF\cdot\bn = 0,\quad\text{ and }\nabla_xe\cdot\bn = 0.
$$
Then, define
\begin{align*}
    \bv^l := \sum_{i = 1}^l a_i(t)\omega_i(x),\quad e^m := \sum_{i = 1}^m b_i(t)\Gamma_i(x),\quad \bF^n := \sum_{i = 1}^n c_i(t)\beta_i(x),
\end{align*}
with initial conditions $\bv^l_0 := P^l\bv_0$, $e^m_0 = P^m(e^{\varepsilon_1, \varepsilon_6}_0)$, $\bF^n_0 = P^n(T^{\det}_{\varepsilon_5}(T_{\varepsilon_3}(\bF_0))\star\varphi_{\varepsilon_7})$, where $P^i$ is a projection onto first $i$ vectors of a respective Galerkin basis, $e^{\varepsilon_1, \varepsilon_6}_0$ is a cut-off from below for $e_0$, that is
\begin{align*}
    e_0^{\varepsilon_1, \varepsilon_6}:=\left\{\begin{array}{ll}
         e_0,\text{ whenever }e_0 \geq \min\{\varepsilon_1, \varepsilon_6\}\\
         1, \text{ otherwise}, 
    \end{array}\right.
\end{align*}
$T^{\det}_{\varepsilon_5}$ is a cut-off from below over the determinant, that is
\begin{align*}
    T^{\det}_{\varepsilon_5}(F) := F\,\mathbf{1}_{\{\det F \geq \varepsilon_5\}} + \bI\,\mathbf{1}_{\{\det F < \varepsilon_5\}},
\end{align*}
$T_{\varepsilon_3}$ is the typical truncation at the level $2/\varepsilon_3$ adapted to matrices, that is 
\begin{align*}
    T_{\varepsilon_3}(F) := F\,\mathbf{1}_{\{|F|\leq 2/\varepsilon_3\}} + \bI\,\mathbf{1}_{\{|F| > 2/\varepsilon_3\}},
\end{align*}
$\varphi_{\varepsilon_7}$ is the standard mollification kernel, i.e. $\varphi$ is a smooth, non-negative function, compactly supported in a ball of radius one and fulfills $\int_{\mathbb{R}^d} \varphi(x) \diff x = 1$. Then, we set $\varphi_{\varepsilon_7}(x) = \frac{1}{{\varepsilon_7}^d} \varphi\left(\frac{x}{\varepsilon_7}\right)$. Moreover, to define an appropriate approximating system let
\begin{align*}
    g_{\varepsilon_1}(z) := \left\{\begin{array}{ll}
          \text{linear} &\text{on }(0, \varepsilon_1), \\
          \text{smooth} &\text{on }[\varepsilon_1, 2\varepsilon_1],\\
          g(z) &\text{on }(2\varepsilon_1, +\infty),
    \end{array}\right.
\end{align*}
satisfying the condition $g_{\varepsilon_1}(0) = 0$. With this, define
\begin{align}\label{eq:step0_eq_for_theta}
    e^*_{\varepsilon_1, \varepsilon_2}(\theta, F) = \theta + \underbrace{(g_{\varepsilon_1}(\theta) - \theta g'_{\varepsilon_1}(\theta))}_{\text{extended by }0\text{ for }\theta < 0}(\mathrm{tr}(F\,F^T) - 3 - \ln((\det(F\, F^T) - \varepsilon_2)_+ + \varepsilon_2)),
\end{align}
and by $\theta^*_{\varepsilon_1, \varepsilon_2}(e, F)$ denote the inverse of the above function with respect to $\theta$, i. e.:
\begin{align*}
    \theta^*_{\varepsilon_1, \varepsilon_2}(e^*_{\varepsilon_1, \varepsilon_2}(\theta, F), F) \equiv \theta .
\end{align*}
Lastly, we define as well
\begin{align*}
    \Lambda_{\varepsilon_3}(s) := \left\{\begin{array}{ll}
         1, & |s| \leq 1/\varepsilon_3, \\
         0, & |s| \geq 2/\varepsilon_3, \\
         \text{smooth, } &\text{otherwise},
    \end{array}\right.
\end{align*}
and
\begin{align*}
    \bT_{\varepsilon_1, \varepsilon_3, \varepsilon_6} := 2\Lambda_{\varepsilon_3}(|F|)g_{\varepsilon_1}(\theta)F\,F^T\frac{(\theta - \varepsilon_6)_+}{\theta} + \nu(\theta) \mathbb{D}\bv.
\end{align*}
Having all the definitions we are ready to state our approximating system
\begin{align}
    &\partial_t \bv^l + \diver_x (\Lambda_{\varepsilon_3}(|\bv^l|^2)\bv^l\otimes \bv^l) - \diver_x \bT_{\varepsilon_1, \varepsilon_3, \varepsilon_6}(\theta^*_{\varepsilon_1,\varepsilon_2}(e^m, \bF^n), \bF^n,  \bD\bv^l) = 0, \label{eq:galerkin_velocity}\\
    &\partial_t \bF^n + \Diver_x(\bF^n\otimes\bv^l)- \Lambda_{\varepsilon_3}(|\bF^n|)\nabla_x \bv^l \bF^n\frac{(\theta^*_{\varepsilon_1, \varepsilon_2}(e^m, \bF^n) - \varepsilon_6)_+}{\theta^*_{\varepsilon_1, \varepsilon_2}(e^m, \bF^n)}\nonumber\\
    &\qquad\,\,\,- \varepsilon_4\Delta_x \bF^n + \frac{\tau(\theta^*_{\varepsilon_1, \varepsilon_2}(e^m, \bF^n))}{2}\frac{(\det\bF^n - \varepsilon_5)_+}{\det\bF^n}(\bF^n\,\bF^{n, T}\, \bF^n - \bF^n) = 0,\\
    &\partial_te^m + \diver_x(e^m \bv^l) - \varepsilon_7\Delta_x e^m - \diver_x(\kappa(\theta^*_{\varepsilon_1, \varepsilon_2}(e^m, \bF^n))\nabla_x \theta^*_{\varepsilon_1, \varepsilon_2}(e^m, \bF^n))\nonumber\\
    &\qquad\qquad\qquad\qquad\qquad\qquad - \bT_{\varepsilon_1, \varepsilon_3, \varepsilon_6}(\theta^*_{\varepsilon_1, \varepsilon_2}(e^m, \bF^n), \bF^n, \bD\bv^l) : \bD\bv^l = 0.\label{eq:galerkin_internal_energy}
\end{align}
Note that the equations above should be understood as the projection onto the first, respectively $l, n, m$ vectors of the Galerkin basis, which is a standard Galerkin formulation. For the sake of simplicity we have written them in the differential form. For brevity we note that the existence of the solutions is a standard procedure involving the Carath\'{e}odory theorem of existence. Moreover, every function actually depends on all of the parameters $l, n, m, \varepsilon_1,..., \varepsilon_7$, but to avoid the overload of indices we shall only keep the index related to the Galerkin approximation (when applicable), and the one with respect to which, we are currently considering the convergence.\\

\noindent\underline{Step 1: convergence with $m\to +\infty$.} It is easy to see that since we keep the Galerkin index for $\bF^n_m$ and $\bv^l_m$ fixed, both of those sequences are bounded in $W^{1,\infty}(0, T; W^{1, \infty}(\Omega))$, for interested readers we also refer to \cite[Section 4, under (4.16)]{bathory2024coupling}. Hence, by the Banach--Alaoglu theorem and the Aubin--Lions lemma (up to the subsequence which we do not relabel),
\begin{align*}
    \partial_t \bv^l_m &\rightharpoonup^* \partial_t \bv^l, &&\text{ weakly* in }L^\infty(0, T; W^{1,\infty}_{\mathbf{n},\diver}(\Omega)),\\
    \partial_t \bF^n_m &\rightharpoonup^* \partial_t \bF^n, &&\text{ weakly* in }L^\infty(0, T; (W^{1,\infty}(\Omega))^{3\times 3}),\\
    \bv^l_m &\rightarrow \bv^l, &&\text{ strongly in }C([0, T]; W^{1, \infty}_{\mathbf{n},\diver}(\Omega)),\\
    \bF^n_m &\rightarrow \bF^n, &&\text{ strongly in }C([0, T]; (W^{1,\infty}(\Omega))^{3\times 3}),\\
    \bv^l_m &\rightarrow \bv^l, &&\text{ a.e. in }Q_T,\\
    \bF^n_m &\rightarrow \bF^n, &&\text{ a.e. in }Q_T.
\end{align*}
Thus, we move forward to consider the convergence of $e^m$. Since we work with Galerkin approximation we can test \eqref{eq:galerkin_internal_energy} by $e^m$, and obtain the following energy equality
\begin{equation}\label{ineq:step1_energy}
\begin{split}
    \frac{d}{dt}\int_{\Omega}|e^m(t)|^2&\diff x + \varepsilon_7\int_{\Omega}|\nabla_x e^m|^2\diff x\\
    &+ \int_{\Omega}\kappa\nabla_x\theta^*_{\varepsilon_1, \varepsilon_2}\cdot \nabla_x e^m\diff x = \int_{\Omega}(\bT_{\varepsilon_1, ..., \varepsilon_6} : \bD \bv^l_m)e^m\diff x.
\end{split}
\end{equation}
Due to the uniform bounds on $\bF^n_m, \bv^l_m$, we deduce a straightforward bound using H\"{o}lder's inequality
\begin{align}\label{ineq:step1_weak_ineq_1}
\left|\int_{\Omega}(\bT_{\varepsilon_1, ..., \varepsilon_6} : \bD \bv^l_m)e^m\diff x \right|\leq \|\bT_{\varepsilon_1, ..., \varepsilon_6} : \bD\bv^l_m\|_{L^2(\Omega)}\|e^m\|_{L^2(\Omega)}.
\end{align}
Let us move forward to a more problematic term involving the gradient of the function $\theta^*_{\varepsilon_1, \varepsilon_2}$. First, note that
\begin{align}\label{eq:step1_grad_theta}
\nabla_x\theta^*_{\varepsilon_1, \varepsilon_2}(e^m, \bF^n_m) = \nabla_e\theta^*_{\varepsilon_1, \varepsilon_2}(e^m, \bF^n_m)\nabla_xe^m + \nabla_F\theta^*_{\varepsilon_1, \varepsilon_2}(e^m, \bF^n_m) \nabla_x\bF^n_m.
\end{align}
We will focus on both the terms separately. By the Inverse Function theorem
\begin{align*}
    &\nabla_e\theta^*_{\varepsilon_1, \varepsilon_2}(e^m, \bF^n_m) = (\nabla_\theta e^*_{\varepsilon_1, \varepsilon_2}(\theta^*_{\varepsilon_1, \varepsilon_2}(e^m, \bF^n_m), \bF^n_m))^{-1}\\
    &= \frac{1}{1 - \theta^*_{\varepsilon_1, \varepsilon_2}(e^m, \bF^n_m)g''_{\varepsilon_1}(\theta_{\varepsilon_1, \varepsilon_2}^*(e^m, \bF^n_m))\tilde{\psi}_{\varepsilon_2}(\bF^n_m\bF^{n, T}_m)}, 
\end{align*}
where
\begin{align}\label{eq:modified_psi_varepsilon2}
\tilde{\psi}_{\varepsilon_2}(\bF^n_m \bF^{n, T}_m) = \mathrm{tr}(\bF^n_m\,\bF^{n,T}_m) - 3 - \ln((\det(\bF^n_m\, \bF^{n, T}_m) - \varepsilon_2)_+ + \varepsilon_2).
\end{align}
Note that whenever $\det(\bF^n_m\, \bF^{n, T}_m) \leq \varepsilon_2$, then (fixing $\varepsilon_2 > 0$ small enough)
\begin{align*}
    &\mathrm{tr}(\bF^n_m\,\bF^{n,T}_m) - 3 - \ln((\det(\bF^n_m\, \bF^{n, T}_m) - \varepsilon_2)_+ + \varepsilon_2))\\
    &= \mathrm{tr}(\bF^n_m\,\bF^{n,T}_m) - 3 - \ln\varepsilon_2 \geq 0,
\end{align*}
and for $\det(\bF^n_m\, \bF^{n, T}_m) > \varepsilon_2$ we can use a classical inequality true for any symmetric matrix (cf. \cite[Lemma A.3]{bathory2024coupling})
\begin{align*}
    &\mathrm{tr}(\bF^n_m\,\bF^{n,T}_m) - 3 - \ln((\det(\bF^n_m\, \bF^{n, T}_m) - \varepsilon_2)_+ + \varepsilon_2))\\
    &= \mathrm{tr}(\bF^n_m\,\bF^{n,T}_m) - 3 - \ln\det(\bF^n_m\, \bF^{n, T}_m) \geq 0.
\end{align*}
Moreover by the definition of $g_{\varepsilon_1}$ and the concavity of $g$ we have
$$
g''_{\varepsilon_1}(\theta)\leq 0,\text{ and }g''_{\varepsilon_1}(\theta) = 0,\text{ whenever }\theta <\varepsilon_1.
$$
Hence,
$$
\theta^*_{\varepsilon_1, \varepsilon_2}(e^m, \bF^n_m)g''_{\varepsilon_1}(\theta_{\varepsilon_1, \varepsilon_2}^*(e^m, \bF^n_m)) \leq 0.
$$
Thus,
\begin{align}\label{ineq:step1_weak_ineq_2}
    0\leq \nabla_e\theta^*_{\varepsilon_1, \varepsilon_2}(e^m, \bF^n_m)\leq 1.
\end{align}
For the second term we obtain the equality
\begin{equation}\label{ineq:step1_weak_ineq_3}
\begin{split}
    &\left|\nabla_F\theta^*_{\varepsilon_1, \varepsilon_2}(e^m, \bF^n_m)\right| = \left|\frac{\nabla_F e^*_{\varepsilon_1, \varepsilon_2}(\theta^*_{\varepsilon_1, \varepsilon_2}(e^m, \bF^n_m), \bF^n_m)}{\nabla_\theta e^*_{\varepsilon_1, \varepsilon_2}(\theta^*_{\varepsilon_1, \varepsilon_2}(e^m, \bF^n_m), \bF^n_m)}\right|\\
    &\leq \left|\mathbb{I} - (\bF^n_m\, \bF^{n, T}_m)^{-1}\mathbf{1}_{\det(\bF^n_m\, \bF^{n, T}_m) > \varepsilon_2}\right| \leq 3 + \frac{C}{\varepsilon_2}\|\bF^n_m\|^4_\infty.
\end{split}
\end{equation}
Combining \eqref{ineq:step1_weak_ineq_1}, \eqref{ineq:step1_weak_ineq_2}, \eqref{ineq:step1_weak_ineq_3}, as well as the bounds on $\kappa$ and the known bounds on $\bF^n_m, \bv^l_m$, together with \eqref{ineq:step1_energy} and the Young inequality we obtain
\begin{equation*}
    \begin{split}
    &\frac{d}{dt}\int_{\Omega}|e^m(t)|^2\diff x + \varepsilon_7\int_{\Omega}|\nabla_x e^m|^2\diff x\\
    &\qquad= -\int_{\Omega}\kappa\nabla_x\theta^*_{\varepsilon_1, \varepsilon_2}\cdot \nabla_x e^m\diff x + \int_{\Omega}(\bT_{\varepsilon_1, ..., \varepsilon_6} : \bD \bv^l_m)e^m\diff x\\
    &\leq \int_{\Omega}\kappa\left|\nabla_F\theta^*_{\varepsilon_1, \varepsilon_2}(e^m, \bF^n_m) \nabla_x\bF^n_m\cdot \nabla_x e^m\right|\diff x + \|\bT_{\varepsilon_1, ..., \varepsilon_6} : \bD\bv^l_m\|_{L^2(\Omega)}\|e^m\|_{L^2(\Omega)}\\
    &\leq C(K, \varepsilon_2, \|\bF^n_m\|_\infty)\int_{\Omega}\left|\nabla_x\bF^n_m\cdot \nabla_x e^m\right|\diff x + C(|\Omega|, \|\bF^n_m\|_{L^4}, \|\bv^l_m\|_{W^{1,\infty}})\|e^m\|_{L^2(\Omega)}\\
    &\leq C(K, \varepsilon_2, \|\bF^n_m\|_\infty, \varepsilon_7)\int_{\Omega}|\nabla_x \bF^n_m|^2\diff x + \frac{\varepsilon_7}{2}\int_{\Omega}|\nabla_x e^m|^2\diff x\\
    &\qquad\qquad + C(|\Omega|, \|\bF^n_m\|_{L^4}, \|\bv^l_m\|_{W^{1,\infty}}) + \|e^m\|_{L^2(\Omega)}^2\\
    &\leq C(K, |\Omega|, \varepsilon_2, \|\bF^n_m\|_\infty, \varepsilon_7, \|\bv^l_m\|_{W^{1,\infty}}) + \frac{\varepsilon_7}{2}\int_{\Omega}|\nabla_x e^m|^2\diff x + \|e^m\|_{L^2(\Omega)}^2
    \end{split}
\end{equation*}
The inequality above together with Gr\"{o}nwall's inequality imply readily that $\{e^m\}_{m\in\N}$ is bounded in $L^\infty(0, T; L^2(\Omega))$, and $L^2(0, T; W^{1,2}(\Omega))$. Moreover, one can test \eqref{eq:galerkin_internal_energy} by $\partial_t e^m$ to obtain $\{\partial_t e^m\}_{m\in\N}$ bounded in $L^\infty(0, T; L^2(\Omega))$. Since the calculation are analogous to the made above, we skip them. Thus, by the Banach--Alaoglu theorem as well as the Aubin--Lions lemma we may deduce the existence of $e\in L^\infty(0, T; L^2(\Omega))\cap L^2(0, T; W^{1,2}(\Omega))$ with $\partial_t e\in L^\infty(0, T; L^2(\Omega))$ such that (up to the subsequence, which we do not relabel)
\begin{align*}
    \partial_t e^m &\rightharpoonup^* \partial_t e, &&\text{ weakly* in }L^\infty(0, T; L^2(\Omega)),\\
    e^m &\rightharpoonup^* e, &&\text{ weakly* in }L^\infty(0, T; L^2(\Omega)),\\
    e^m &\rightharpoonup e, &&\text{ weakly in }L^2(0, T; W^{1,2}(\Omega)),\\
    e^m &\rightarrow e, &&\text{ strongly in }L^2(Q_T),\\
    e^m &\rightarrow e, &&\text{ a.e. in }Q_T.
\end{align*} 
All that is left to do is to identify the limit of $\theta^*_{\varepsilon_1, \varepsilon_2}(e^m, \bF^n_m)$. Note that by \eqref{eq:step1_grad_theta}, \eqref{ineq:step1_weak_ineq_2}, \eqref{ineq:step1_weak_ineq_3}, and the already established bounds on $\{\nabla_x e^m\}_{m\in\N}$ and $\{\nabla_x\bF^n_m\}_{m\in\N}$ we know that $\{\nabla_x\theta^*_{\varepsilon_1, \varepsilon_2}(e^m, \bF^n_m)\}_{m\in\N}$ is bounded in $L^2(Q_T)$. Moreover by a similar argument we can see that $\{\partial_t \theta^*_{\varepsilon_1, \varepsilon_2}(e^m, \bF^n_m)\}_{m\in\N}$ is bounded in $L^2(Q_T)$ as well. Using the equation
\begin{align*}
    \theta^*_{\varepsilon_1, \varepsilon_2}(e^m, \bF^n_m) = \theta^*_{\varepsilon_1, \varepsilon_2}(e^m_0, \bF^n_{m, 0}) + \int_0^t\partial_t\theta^*_{\varepsilon_1, \varepsilon_2}(e^m(s, x), \bF^n_m(s, x))\diff s,
\end{align*}
we can deduce that $\{\theta^*_{\varepsilon_1, \varepsilon_2}(e^m, \bF^n_m)\}_{m\in\N}$ is bounded in $L^2(Q_T)$ as long as $\{\theta^*_{\varepsilon_1, \varepsilon_2}(e^m_0, \bF^n_{m, 0})\}_{m\in\N}$ is. By \eqref{ineq:step1_weak_ineq_2}, \eqref{ineq:step1_weak_ineq_3} we know that $\theta^*_{\varepsilon_1, \varepsilon_2}(e^m, \bF^n_m)$ is Lipschitz with respect to its variables. Hence,
\begin{align*}
    |\theta^*_{\varepsilon_1, \varepsilon_2}(e^m_0, \bF^n_{m, 0})| \leq C(1 + |e^m_0| + |\bF^n_{m, 0}|)
\end{align*}
and by the boundedness of $\{e^m_0\}_{m\in\N}$ and $\{\bF^n_{m, 0}\}_{m\in\N}$ we get what is needed. Thus, by the Banach--Alaoglu theorem and the Aubin--Lions lemma we can imply that there exists $\theta\in L^2(0, T; W^{1,2}(\Omega))$ with $\partial_t\theta\in L^2(Q_T)$ such that (up to the subsequence which we do not relabel)
\begin{align*}
    \partial_t\theta^*_{\varepsilon_1, \varepsilon_2}(e^m, \bF^n_m)  &\rightharpoonup^* \partial_t \theta, &&\text{ weakly in }L^2(Q_T),\\
    \theta^*_{\varepsilon_1, \varepsilon_2}(e^m, \bF^n_m) &\rightharpoonup \theta, &&\text{ weakly in }L^2(0, T; W^{1,2}(\Omega)),\\
    \theta^*_{\varepsilon_1, \varepsilon_2}(e^m, \bF^n_m) &\rightarrow \theta, &&\text{ strongly in }L^2(Q_T),\\
    \theta^*_{\varepsilon_1, \varepsilon_2}(e^m, \bF^n_m) &\rightarrow \theta, &&\text{ a.e. in }Q_T.
\end{align*}
At the same time, we can identify $\theta$ in another way. Since $\{e^m\}_{m\in\N}$ and $\{\bF^n_m\}_{m\in\N}$ converge a.e. to their limits and $\theta^*_{\varepsilon_1, \varepsilon_2}$ is continuous with respect to the variables we can see that
\begin{align*}
    \theta(t, x) = \theta^*_{\varepsilon_1, \varepsilon_2}(e, \bF^n).
\end{align*}
Moreover, converging in \eqref{eq:step0_eq_for_theta} grants us
\begin{equation}\label{eq:step1_e_with_theta}
\begin{split}
    e(t, x) &= \theta^*_{\varepsilon_1, \varepsilon_2}(e, \bF^n) + \underbrace{(g_{\varepsilon_1}(\theta^*_{\varepsilon_1, \varepsilon_2}(e, \bF^n)) - \theta^*_{\varepsilon_1, \varepsilon_2}(e, \bF^n) g'_{\varepsilon_1}(\theta^*_{\varepsilon_1, \varepsilon_2}(e, \bF^n)))}_{\text{extended by }0\text{ for }\theta < 0}\tilde{\psi}_{\varepsilon_2}(\bF^n\bF^{n, T})\\
    &= \theta(t, x) + \underbrace{(g_{\varepsilon_1}(\theta(t, x)) - \theta(t, x) g'_{\varepsilon_1}(\theta(t, x)))}_{\text{extended by }0\text{ for }\theta < 0}\tilde{\psi}_{\varepsilon_2}(\bF^n\bF^{n, T}),
\end{split}
\end{equation}
where $\tilde{\psi}_{\varepsilon_2}$ is defined in \eqref{eq:modified_psi_varepsilon2}. Having \eqref{eq:step1_e_with_theta} we will drop $e^*_{\varepsilon_1, \varepsilon_2}$ and $\theta^*_{\varepsilon_1, \varepsilon_2}$ notation, as we introduced it only to obtain this equation in the limit to connect the internal energy and the temperature. Having all of the proven convergences it is easy to see that we may converge in \eqref{eq:galerkin_velocity}-\eqref{eq:galerkin_internal_energy} to obtain
\begin{align}
    &\partial_t \bv^l + \diver_x (\Lambda_{\varepsilon_3}(|\bv^l|^2)\bv^l\otimes \bv^l) - \diver_x \bT_{\varepsilon_1, \varepsilon_3, \varepsilon_6}(\theta, \bF^n,  \bD\bv^l) = 0, \label{eq:step2_velocity}\\
    &\partial_t \bF^n + \Diver_x(\bF^n\otimes\bv^l)- \Lambda_{\varepsilon_3}(|\bF^n|)\nabla_x \bv^l \bF^n\frac{(\theta - \varepsilon_6)_+}{\theta}\nonumber\\
    &\qquad\,\,\,- \varepsilon_4\Delta_x \bF^n + \frac{\tau(\theta)}{2}\frac{(\det\bF^n - \varepsilon_5)_+}{\det\bF^n}(\bF^n\,\bF^{n, T}\, \bF^n - \bF^n) = 0,\label{eq:step2_F}\\
    &\partial_te + \diver_x(e \bv^l) - \varepsilon_7\Delta_x e - \diver_x(\kappa(\theta)\nabla_x \theta)\nonumber\\
    &\qquad\qquad\qquad\qquad\qquad\qquad - \bT_{\varepsilon_1, \varepsilon_3, \varepsilon_6}(\theta, \bF^n, \bD\bv^l) : \bD\bv^l = 0,\label{eq:step2_internal_energy}
\end{align}
where \eqref{eq:step2_velocity} and \eqref{eq:step2_F} are to be understood as projections onto the Galerkin basis, and \eqref{eq:step2_internal_energy} is to be understood in the weak sense in the sense of an operator on a predual space to the time derivative (in this case $\partial_t e\in L^\infty(0, T; L^2(\Omega))$, but we shall refer the reader to the particular convergences in the later steps to verify the predual space, as it changes multiple times).\\

\noindent\underline{Step 2: convergence with $n\to +\infty$.} As in the previous step, it is easy to obtain (sine $l$ is fixed) a subsequence (which we do not relabel) that satisfies
\begin{align*}
    \partial_t \bv^l_n &\rightharpoonup^* \partial_t \bv^l, &&\text{ weakly* in }L^\infty(0, T; W^{1,\infty}_{\mathbf{n},\diver}(\Omega)),\\
    \bv^l_n &\rightarrow \bv^l, &&\text{ strongly in }C([0, T]; W^{1, \infty}_{\mathbf{n},\diver}(\Omega)),\\
    \bv^l_n &\rightarrow \bv^l, &&\text{ a.e. in }Q_T.
\end{align*}
Moreover, as we work with Galerkin projections, we can test \eqref{eq:step2_F} by $\bF^n$ to get $\{\bF^n\}_{n\in\N}$ bounded in $L^\infty(0, T; (L^2(\Omega))^{3\times 3})\cap L^2(0, T; (W^{1,2}(\Omega))^{3\times 3})$. As the argument here is fairly simple as long as $\{\bv^l_n\}_{n\in\N}$ is bounded in $L^\infty(0, T; W^{1,\infty}(\Omega))$, we skip it for the sake of brevity. We will instead focus on showing the bound $\{\bF^n\}_{n\in\N}\subset (L^\infty(Q_T))^{3\times 3}$. To do so let
\begin{align*}
    C(t) := M\,e^{Kt},
\end{align*}
where $K$ comes from \eqref{propest}, and $M > \frac{2}{\varepsilon_3}$ is to be fixed later, and define $\bH^n(t, x):= (|\bF^n(t, x)| - C(t))_+$, $\bG^n(t, x) :=\frac{\bF^n}{|\bF^n|}\bH^n$. Note that $\bG^n$ belongs to $L^2(0, T; (W^{1,2}(\Omega))^{3\times 3})$, which means we may test \eqref{eq:step2_F} by $P^n\bG^n$. Let us treat all the terms appearing there separately. For the time derivative we get 
\begin{align}\label{eq:step2_some_eq}
    \int_{\Omega}\frac{d}{dt}\bF^n:P^n\bG^n\diff x = \int_{\Omega}\frac{d}{dt}\bF^n:\bG^n\diff x = \frac{1}{2}\frac{d}{dt}\int_{\Omega}|\bH^n|^2\diff x + C'(t)\int_{\Omega}\bH^n\diff x.
\end{align}
Moving forward, we obtain
\begin{align*}
    &\int_{\Omega}\bF^n\otimes \bv^l_n:\nabla_xP^n\bG^n\diff x = \int_{\Omega}\bF^n\otimes \bv^l:\nabla_x\bG^n\diff x = \sum_{i, j, k}\int_{\Omega}\bv^l_{n,i}\partial_{x_i}\bF^n_{jk}\bG^n_{j k}\diff x\\
    &= \sum_{i, j, k}\int_{\Omega}\bv^l_{n,i}\partial_{x_i}|\bF^n_{jk}|\bH^n_{j k}\diff x = \frac{1}{2}\sum_{i}\int_{\Omega}\bv^l_{n, i}\partial_{x_i}|\bH^n|^2\diff x= 0,
\end{align*}
and as $\Lambda_3(|\bF^n|) = 0$, whenever $|\bF^n| > 2/\varepsilon_3$
\begin{equation*}
\begin{split}
    &\left|\int_{\Omega}\Lambda_{\varepsilon_3}(|\bF^n|)\nabla_x \bv^l_n \bF^n\frac{(\theta - \varepsilon_6)_+}{\theta}:P^n\bG^n\diff x\right|\\
    & \quad= \left|\int_{\Omega}\Lambda_{\varepsilon_3}(|\bF^n|)\nabla_x \bv^l_n \bF^n\frac{(\theta - \varepsilon_6)_+}{\theta}:\bG^n\diff x\right|= 0,
\end{split}
\end{equation*}
as well as
\begin{equation*}
    \begin{split}
        &\int_{\Omega}\nabla_x\bF^n : \nabla_xP^n\bG^n\diff x = \int_{\Omega}\nabla_x\bF^n:\nabla_x\bG^n\diff x\\
        &= \int_{\Omega}\left(1 - \frac{C(t)}{|\bF^n|}\right)_+|\nabla_x\bF^n|^2\diff x + \int_{\Omega}\mathbf{1}_{\{|\bF^n| \geq C(t)\}}\frac{C(t)}{|\bF^n|^3}\sum_{i}|\bF^n:\partial_{x_i}\bF^n|^2\diff x \geq 0
    \end{split}
\end{equation*}
Furthermore
\begin{equation*}
    \begin{split}
        &\int_\Omega \frac{\tau(\theta^n)}{2}\frac{(\det\bF^n - \varepsilon_5)_+}{\det\bF^n}\bF^n\bF^{n, T}\bF^n : P^n\bG^n \diff x\\
        &\qquad\qquad= \int_{\Omega}\frac{\tau(\theta^n)}{2}\frac{(\det\bF^n - \varepsilon_5)_+}{\det\bF^n}\frac{|\bF^n\bF^{n, T}|^2}{|\bF^n|}\bH^n\diff x\geq 0,
    \end{split}
\end{equation*}
and using \eqref{esttau}
\begin{equation*}
    \begin{split}
        &\int_{\Omega}\frac{\tau(\theta^n)}{2}\frac{(\det\bF^n - \varepsilon_5)_+}{\det\bF^n}\bF^n : P^n\bG^n\diff x = \int_{\Omega}\frac{\tau(\theta^n)}{2}\frac{(\det\bF^n - \varepsilon_5)_+}{\det\bF^n}|\bF^n|\bH^n\diff x\\
        &\leq K\int_\Omega|\bH^n|^2\diff x + KC(t)\int_{\Omega}\bH^n\diff x.
    \end{split}
\end{equation*}
Combining all of the above, we obtain an inequality
\begin{equation*}
    \begin{split}
        &\frac{1}{2}\frac{d}{dt}\int_{\Omega}|\bH^n|^2\diff x + C'(t)\int_{\Omega}\bH^n\diff x\\
        &\leq K\int_\Omega|\bH^n|^2\diff x + KC(t)\int_{\Omega}\bH^n\diff x\\
        &\leq .K\int_\Omega|\bH^n|^2\diff x + MKC(t)\int_{\Omega}\bH^n\diff x
    \end{split}
\end{equation*}
Since $C'(t) =  MKC(t)$ the linear terms cancel and we are left with
\begin{align*}
    \frac{1}{2}\frac{d}{dt}\int_{\Omega}|\bH^n|^2\diff x\leq K\int_\Omega|\bH^n|^2\diff x,
\end{align*}
which by Gr\"{o}nwall's inequality implies
\begin{align*}
    \int_{\Omega}|\bH^n|^2\diff x \leq e^{2TK}\int_{\Omega}|\bH^n_0|^2\diff x.
\end{align*}
Note that since 
\begin{align*}
&\|\bF^n_0\|_\infty = \|P^n(T^{\det}_{\varepsilon_5}(T_{\varepsilon_3}(\bF_0))\star\varphi_{\varepsilon_7})\|_\infty\\
&\leq \|P^n(T^{\det}_{\varepsilon_5}(T_{\varepsilon_3}(\bF_0))\star\varphi_{\varepsilon_7})\|_{W^{s, 2}} \leq \|T^{\det}_{\varepsilon_5}(T_{\varepsilon_3}(\bF_0))\star\varphi_{\varepsilon_7}\|_{W^{s, 2}},
\end{align*}
by defining $M := \max\{2/\varepsilon_3, \|T^{\det}_{\varepsilon_5}(T_{\varepsilon_3}(\bF_0))\star\varphi_{\varepsilon_7}\|_{W^{s, 2}}\}$, we know that $\bH^n_0 = 0$. Thus, we deduce
$$
\bH^n \equiv 0,\text{ a.e. in }Q_T.
$$
Hence,
\begin{align}\label{ineq:step2_linfty_bound}
|\bF^n(t, x)| \leq C(t) = M\,e^{Kt},
\end{align}
and we have what we postulated. As a comment we want to mention that a similar bound will hold for a weak formulation of $\bF$ in further arguments, as without the projection $P^n$ one can simply utilize the Young convolution inequality to deduce 
$$
\|\bF_0\|_\infty = \|T^{\det}_{\varepsilon_5}(T_{\varepsilon_3}(\bF_0))
\star\varphi_{\varepsilon_7}\|_\infty \leq \|\varphi_{\varepsilon_7}\|_1\|T^{\det}_{\varepsilon_5}(T_{\varepsilon_3}(\bF_0))\|_\infty \leq \frac{2}{\varepsilon_3}.
$$
In consequence, one will obtain a similar bound but with a constant
$$
\frac{2}{\varepsilon_3}\,e^{Kt}.
$$
Thus, whenever $\varepsilon_3$ is fixed, we can utilize the bound in $L^\infty$ on any sequence $\bF$. Going back to the main argument, we can obtain in a similar manner as in the proof of Corollary \ref{corpropest} that $\{\partial_t\bF^n\}_{n\in\N}$ is bounded in $L^{\infty}(0, T; ((W^{1,2}(\Omega))^{3\times 3})^*)$. Thus, by the Banach--Alaoglu theorem and the Aubin--Lions lemma we can find a subsequence (which we do not relabel) for which
\begin{align*}
    \partial_t \bF^n &\rightharpoonup^* \partial_t \bF, &&\text{ weakly* in }L^{\infty}(0, T; ((W^{1,2}(\Omega))^{3\times 3})^*),\\
    \bF^n &\rightharpoonup \bF, &&\text{ weakly in }L^2(0, T; (W^{1,2}(\Omega))^{3\times 3}),\\
    \bF^n &\rightharpoonup^* \bF, &&\text{ weakly* in }(L^\infty(Q_T))^{3\times 3},\\
    \bF^n &\rightarrow \bF, &&\text{ strongly in }(L^q(Q_T))^{3\times 3},\ q< +\infty,\\
    \bF^n &\rightarrow \bF, &&\text{ a.e. in }Q_T.
\end{align*}
To move forward, note that the bounds established for $e^n$ in Step 1 are independent of $n$. The only one that does not hold is $\{\partial_t e^n\}_{n\in\N}$ bounded in $L^\infty(0, T; L^2(\Omega))$. Here, using the equation \eqref{eq:step2_internal_energy} one deduces that $\{\partial_t e^n\}_{n\in\N}$ is bounded in $L^2(0, T; (W^{1,2}(\Omega))^*)$. Thus, similarly as before we imply the convergences
\begin{align*}
    \partial_t e^n &\rightharpoonup^* \partial_t e, &&\text{ weakly* in } L^2(0, T; (W^{1,2}(\Omega))^*),\\
    e^n &\rightharpoonup^* e, &&\text{ weakly* in }L^\infty(0, T; L^2(\Omega)),\\
    e^n &\rightharpoonup e, &&\text{ weakly in }L^2(0, T; W^{1,2}(\Omega)),\\
    e^n &\rightarrow e, &&\text{ strongly in }L^2(Q_T),\\
    e^n &\rightarrow e, &&\text{ a.e. in }Q_T.
\end{align*}
Analogously
\begin{align*}
    \partial_t \theta^n &\rightharpoonup^* \partial_t \theta, &&\text{ weakly* in } L^2(0, T; (W^{1,2}(\Omega))^*),\\
    \theta^n &\rightharpoonup \theta, &&\text{ weakly in }L^2(0, T; W^{1,2}(\Omega)),\\
    \theta^n &\rightarrow \theta, &&\text{ strongly in }L^2(Q_T),\\
    \theta^n &\rightarrow \theta, &&\text{ a.e. in }Q_T.
\end{align*}
All of the convergences allow us to converge in \eqref{eq:step2_velocity}--\eqref{eq:step2_internal_energy} and obtain
\begin{align}
    &\partial_t \bv^l + \diver_x (\Lambda_{\varepsilon_3}(|\bv^l|^2)\bv^l\otimes \bv^l) - \diver_x \bT_{\varepsilon_1, \varepsilon_3, \varepsilon_6}(\theta, \bF,  \bD\bv^l) = 0, \label{eq:step3_velocity}\\
    &\partial_t \bF + \Diver_x(\bF\otimes\bv^l)- \Lambda_{\varepsilon_3}(|\bF|)\nabla_x \bv^l \bF\frac{(\theta - \varepsilon_6)_+}{\theta}\nonumber\\
    &\qquad\,\,\,- \varepsilon_4\Delta_x \bF + \frac{\tau(\theta)}{2}\frac{(\det\bF - \varepsilon_5)_+}{\det\bF}(\bF\,\bF^{T}\, \bF - \bF) = 0,\label{eq:step3_F}\\
    &\partial_te + \diver_x(e \bv^l) - \varepsilon_7\Delta_x e - \diver_x(\kappa(\theta)\nabla_x \theta)\nonumber\\
    &\qquad\qquad\qquad\qquad\qquad\qquad - \bT_{\varepsilon_1, \varepsilon_3, \varepsilon_6}(\theta, \bF, \bD\bv^l) : \bD\bv^l = 0,\label{eq:step3_internal_energy}
\end{align}
where \eqref{eq:step3_velocity} is to be understood as projections onto the Galerkin basis, and \eqref{eq:step3_F}, \eqref{eq:step3_internal_energy} are to be understood in the weak sense in the sense of an operator on a predual space to the time derivative. Note that from \eqref{eq:step1_e_with_theta} we keep
\begin{equation}\label{eq:step3_e_with_theta}
\begin{split}
    e(t, x) = \theta(t, x) + \underbrace{(g_{\varepsilon_1}(\theta(t, x)) - \theta(t, x) g'_{\varepsilon_1}(\theta(t, x)))}_{\text{extended by }0\text{ for }\theta < 0}\tilde{\psi}_{\varepsilon_2}(\bF\bF^{T}),
\end{split}
\end{equation}
for $\tilde{\psi}_{\varepsilon_2}$ defined as in \eqref{eq:modified_psi_varepsilon2}.\\

\noindent\underline{Step 3: $\theta \geq\min\{\varepsilon_6, \varepsilon_1\}$.} Here, note that whenever $\theta < \min\{\varepsilon_6, \varepsilon_1\}$ by \eqref{eq:step3_e_with_theta} we know that $e = \theta$. Now test \eqref{eq:step3_internal_energy} by $-\mathbf{1}_{(0, \tau)}(\theta - \min\{\varepsilon_6, \varepsilon_1\})_-$. We can do so only formally but we skip the approximation procedure for the sake of brevity. We get
\begin{equation*}
    \begin{split}
        &\|(\theta - \min\{\varepsilon_6, \varepsilon_1\})_-(\tau)\|_{2}^2 - \|(\theta - \min\{\varepsilon_6, \varepsilon_1\})_-(0)\|_{2}^2\\
        &\qquad\qquad+ \int_0^\tau\int_\Omega(\varepsilon_7 + \kappa(\theta))|\nabla_x(\theta-\min\{\varepsilon_6, \varepsilon_1\})_-|^2\diff x\diff t\\
        &=-\int_0^\tau\int_\Omega\bT_{\varepsilon_1, \varepsilon_3, \varepsilon_6}(\theta, \bF, \bD\bv^l) : \bD\bv^l(\theta-\min\{\varepsilon_6, \varepsilon_1\})_-\diff t\diff x \leq 0.
    \end{split}
\end{equation*}
As by definition $e_0^n \geq \min\{\varepsilon_6, \varepsilon_1\}$ (again by \eqref{eq:step3_e_with_theta} if there would be points at which $\theta_0^n < \min\{\varepsilon_6, \varepsilon_1\}$, then it would follow that $e_0^n<\min\{\varepsilon_6, \varepsilon_1\}$ as well), we deduce
$$
\|(\theta - \min\{\varepsilon_6, \varepsilon_1\})_-(\tau)\|_{2} = 0,
$$
and in consequence $\theta \geq \min\{\varepsilon_6, \varepsilon_1\}$.\\

\noindent\underline{Step 4: convergence with $\varepsilon_7\to 0^+$.} As in the previous step, we have
\begin{align*}
    \partial_t \bv^l_{\varepsilon_7} &\rightharpoonup^* \partial_t \bv^l, &&\text{ weakly* in }L^\infty(0, T; W^{1,\infty}_{\mathbf{n},\diver}(\Omega)),\\
    \bv^l_{\varepsilon_7} &\rightarrow \bv^l, &&\text{ strongly in }C([0, T]; W^{1, \infty}_{\mathbf{n},\diver}(\Omega)),\\
    \bv^l_{\varepsilon_7} &\rightarrow \bv^l, &&\text{ a.e. in }Q_T.
\end{align*}
Moreover, it is easy to see that \eqref{eq:step3_F} can be tested by $\bF^{\varepsilon_7}$ (the only slight issue is the regularity in time, but one can mollify in the time variable, we skip the classical argument for the sake of brevity), which means that we again obtain the bounds of $\{\bF^{\varepsilon_7}\}_{\varepsilon_7 > 0}$ in $L^\infty(0, T; (L^2(\Omega))^{3\times 3})\cap L^2(0, T; (W^{1,2}(\Omega))^{3\times 3})$. To see that this sequence is bounded in $(L^\infty(Q_T))^{3\times 3}$ one can repeat the argument given from \eqref{eq:step2_some_eq} to \eqref{ineq:step2_linfty_bound}, see also comments under \eqref{ineq:step2_linfty_bound}. Hence,  
\begin{align*}
    \partial_t \bF^{\varepsilon_7} &\rightharpoonup^* \partial_t \bF, &&\text{ weakly* in }L^{\infty}(0, T; ((W^{1,2}(\Omega))^{3\times 3})^*),\\
    \bF^{\varepsilon_7} &\rightharpoonup \bF, &&\text{ weakly in }L^2(0, T; (W^{1,2}(\Omega))^{3\times 3}),\\
    \bF^{\varepsilon_7} &\rightharpoonup^* \bF, &&\text{ weakly* in }(L^\infty(Q_T))^{3\times 3},\\
    \bF^{\varepsilon_7} &\rightarrow \bF, &&\text{ strongly in }(L^q(Q_T))^{3\times 3},\ q< +\infty,\\
    \bF^{\varepsilon_7} &\rightarrow \bF, &&\text{ a.e. in }Q_T.
\end{align*}
To move forward, we will obtain proper bounds on $\{\theta^{\varepsilon_7}\}_{\varepsilon_7 > 0}$. First, test \eqref{eq:step3_internal_energy} by $1$, which gives us $\{e^{\varepsilon_7}\}_{\varepsilon_7>0}$ bounded in $L^\infty(0, T; L^1(\Omega))$, thus from \eqref{eq:step3_e_with_theta}, the non-negativity of the function $\theta\mapsto g_{\varepsilon_1}(\theta) - \theta g'_{\varepsilon_1}(\theta)$, and non-negativity of $\tilde{\psi}_{\varepsilon_2}$, we obtain that the same bound is true for $\{\theta^{\varepsilon_7}\}_{\varepsilon_7 > 0}$. Furthermore, by Step 3 $\frac{1}{\theta}$ is a proper test function, hence we can follow Proposition \ref{otherform} to obtain the analogue of \eqref{evetalambda}, i.e.
\begin{equation}\label{eq:step4_lambda_entr_equality}
    \begin{split}
        &\frac{d}{dt}\int_{\Omega}\eta^{\varepsilon_7}_\lambda\diff x + \varepsilon_7\int_{\Omega}\nabla_x e^{\varepsilon_7}\cdot\nabla_x(\theta^{\varepsilon_7})^{\lambda-1}\diff x\\
        &\quad- \varepsilon_4\int_{\Omega}(h_\lambda(\theta^{\varepsilon_7}) - g'_{\varepsilon_1}(\theta^{\varepsilon_7})(\theta^{\varepsilon_7})^{\lambda} + g_{\varepsilon_1}(\theta^{\varepsilon_7})(\theta^{\varepsilon_7})^{\lambda -1})\tilde{\psi}'_{\varepsilon_2}(\bB^{\varepsilon_7}):(\nabla_x\bF^{\varepsilon_7} \cdot \nabla_x\bF^{\varepsilon_7, T})\diff x\\
        &\quad- \varepsilon_4\int_{\Omega}\nabla_x(\tilde{\psi}'_{\varepsilon_2}(\bB^{\varepsilon_7})(h_\lambda(\theta^{\varepsilon_7}) - g'_{\varepsilon_1}(\theta^{\varepsilon_7})(\theta^{\varepsilon_7})^{\lambda} + g_{\varepsilon_1}(\theta^{\varepsilon_7})(\theta^{\varepsilon_7})^{\lambda -1})):(\nabla_x\bF^{\varepsilon_7}\bF^{\varepsilon_7, T} + \bF^{\varepsilon_7}\nabla_x\bF^{\varepsilon_7, T})\diff x\\
        &\quad-\int_{\Omega}\tau\frac{(\det\bF^{\varepsilon_7} - \varepsilon_5)_+}{\det\bF^{\varepsilon_7}}((\bB^{\varepsilon_7})^2 - \bB^{\varepsilon_7}):\tilde{\psi}'_{\varepsilon_2}(\bB^{\varepsilon_7})(h_\lambda(\theta^{\varepsilon_7}) - g'_{\varepsilon_1}(\theta^{\varepsilon_7})(\theta^{\varepsilon_7})^{\lambda})\diff x\\
        &\quad+\int_{\Omega}2(\bB^{\varepsilon_7} - \bI):\bD\bv^l_{\varepsilon_7}\Lambda_{\varepsilon_3}(|\bF^{\varepsilon_7}|)\frac{(\theta^{\varepsilon_7} - \varepsilon_6)_+}{\theta^{\varepsilon_7}}(h_\lambda(\theta^{\varepsilon_7}) - g'_{\varepsilon_1}(\theta^{\varepsilon_7})(\theta^{\varepsilon_7})^{\lambda})\diff x\\
        &=(1-\lambda)\int_{\Omega}\kappa\frac{|\nabla_x\theta^{\varepsilon_7}|^2}{(\theta^{\varepsilon_7})^{2 - \lambda}}\diff x + \int_{\Omega}2\nu\frac{|\bD\bv^l_{\varepsilon_7}|^2}{(\theta^{\varepsilon_7})^{2 - \lambda}} + \tau\frac{(\det\bF^{\varepsilon_7} - \varepsilon_5)_+}{\det\bF^{\varepsilon_7}}\frac{g_{\varepsilon_1}(\theta^{\varepsilon_7})((\bB^{\varepsilon_7})^2 - \bB^{\varepsilon_7}):\tilde{\psi}'_{\varepsilon_2}(\bB^{\varepsilon_7})}{(\theta^{\varepsilon_7})^{1 - \lambda}}\diff x,
    \end{split}
\end{equation}
where $\lambda\in (0, 1)$, and $h_\lambda$ is defined in Lemma \ref{suithlambdatheta}. Note that by boundedness of $\bB^{\varepsilon_7}$ and $\bD\bv^l_{\varepsilon_7}$ in $L^\infty(Q_T)$ we have \eqref{entrinequality} for free (in fact, $L^2(Q_T)$ bounds for both of them would be enough), and one can operate the same way as in the proof of \eqref{pom2est3} to bound any terms without $\varepsilon_7, \varepsilon_4$ in front of them. Indeed, without loss of generality we shall assume $\varepsilon_2 < \varepsilon_5^2$ (as we will converge with $\varepsilon_2\to 0^+$ before doing so with $\varepsilon_5$), which implies
\begin{align}\label{eq:step4_helpful_eq}
\frac{(\det\bF^{\varepsilon_7} - \varepsilon_5)_+}{\det\bF^{\varepsilon_7}}((\bB^{\varepsilon_7})^2 - \bB^{\varepsilon_7}):\tilde{\psi}'_{\varepsilon_2}(\bB^{\varepsilon_7}) = \frac{(\det\bF^{\varepsilon_7} - \varepsilon_5)_+}{\det\bF^{\varepsilon_7}}|\bB^{\varepsilon_7} - \bI|^2.
\end{align}
Thus, we may bound
\begin{equation}\label{ineq:step4_ineq_entr_lambda_0}
    \begin{split}
        &\left|\int_{\Omega}\tau\frac{(\det\bF^{\varepsilon_7} - \varepsilon_5)_+}{\det\bF^{\varepsilon_7}}((\bB^{\varepsilon_7})^2 - \bB^{\varepsilon_7}):\tilde{\psi}'_{\varepsilon_2}(\bB^{\varepsilon_7})(h_\lambda(\theta^{\varepsilon_7}) - g'_{\varepsilon_1}(\theta^{\varepsilon_7})(\theta^{\varepsilon_7})^{\lambda})\diff x\right|\\
        &\leq K\|(h_\lambda(\theta^{\varepsilon_7}) - g'_{\varepsilon_1}(\theta^{\varepsilon_7})(\theta^{\varepsilon_7})^{\lambda})\|_\infty\int_\Omega|\bB^{\varepsilon_7} - \bI|^2\diff x,
    \end{split}
\end{equation}
and
\begin{equation}\label{ineq:step4_ineq_entr_lambda_00}
    \begin{split}
        &\left|\int_{\Omega}2(\bB^{\varepsilon_7} - \bI):\bD\bv^{\varepsilon_7}\frac{(\theta^{\varepsilon_7} - \varepsilon_6)_+}{\theta^{\varepsilon_7}}(h_\lambda(\theta^{\varepsilon_7}) - g'_{\varepsilon_1}(\theta^{\varepsilon_7})(\theta^{\varepsilon_7})^{\lambda})\diff x\right|\\
        &\leq \|(h_\lambda(\theta^{\varepsilon_7}) - g'_{\varepsilon_1}(\theta^{\varepsilon_7})(\theta^{\varepsilon_7})^{\lambda})\|_\infty\left(\int_\Omega |\bB^{\varepsilon_7} - \bI|^2\diff x + \int_{\Omega}|\bD\bv^l_{\varepsilon_7}|^2\diff x\right).
    \end{split}
\end{equation}
Now, let us focus on those additional terms. For the first one, using \eqref{eq:step3_e_with_theta} we get
\begin{equation*}
    \begin{split}
        &\varepsilon_7\int_{\Omega}\nabla_x e^{\varepsilon_7}\cdot\nabla_x(\theta^{\varepsilon_7})^{\lambda-1}\diff x = -\varepsilon_7(1 - \lambda)\int_{\Omega}(\theta^{\varepsilon_7})^{\lambda-2}|\nabla_x\theta^{\varepsilon_7}|^2\diff x\\
        &\qquad\qquad+ \varepsilon_7(1 - \lambda)\int_{\Omega}(\theta^{\varepsilon_7})^{\lambda-1}g''_{\varepsilon_1}(\theta^{\varepsilon_7})\tilde{\psi}_{\varepsilon_2}(\bB^{\varepsilon_7})|\nabla_x\theta^{\varepsilon_7}|^2\diff x\\
        &\qquad\qquad -\varepsilon_7(1 - \lambda)\int_{\Omega}(\theta^{\varepsilon_7})^{\lambda-2}(g_{\varepsilon_1}(\theta^{\varepsilon_7}) - \theta^{\varepsilon_7}g'_{\varepsilon_1}(\theta^{\varepsilon_7}))\tilde{\psi}'_{\varepsilon_2}(\bB^{\varepsilon_7})\nabla_x\bB^{\varepsilon_7}\cdot\nabla_x\theta^{\varepsilon_7}\diff x.
    \end{split}
\end{equation*}
The first term can be put on the right-hand side of \eqref{eq:step4_lambda_entr_equality} as it is negative, the same can be said for the second term as $g_{\varepsilon_1}$ is concave. For the third one we can estimate using Young's inequality
\begin{equation}\label{ineq:step4_bound_for_entr_lambda}
    \begin{split}
        &\varepsilon_7(1-\lambda)\left|\int_{\Omega}(\theta^{\varepsilon_7})^{\lambda-2}(g_{\varepsilon_1}(\theta^{\varepsilon_7}) - \theta^{\varepsilon_7}g'_{\varepsilon_1}(\theta^{\varepsilon_7}))\tilde{\psi}'_{\varepsilon_2}(\bB^{\varepsilon_7})\nabla_x\bB^{\varepsilon_7}\cdot\nabla_x\theta^{\varepsilon_7}\diff x\right|\\
        &\leq (1-\lambda)\frac{\varepsilon_7}{2}\int_{\Omega}\frac{|\nabla_x\theta^{\varepsilon_7}|^2}{(\theta^{\varepsilon_7})^{2 - \lambda}}\diff x + 2\varepsilon_7(1-\lambda)\|(\theta^{\varepsilon_7})^{\lambda/2-1}(g_{\varepsilon_1}(\theta^{\varepsilon_7}) - \theta^{\varepsilon_7}g'_{\varepsilon_1}(\theta^{\varepsilon_7}))\tilde{\psi}'_{\varepsilon_2}(\bB^{\varepsilon_7})\|^2_\infty\int_\Omega|\nabla_x\bB^{\varepsilon_7}|^2\diff x\\
        &=: I_1 + I_2.
    \end{split}
\end{equation}
The term $I_1$ can be absorbed into the right-hand side of \eqref{eq:step4_lambda_entr_equality}. For $I_2$ we observe, that from Step 3 the sequence $(\theta^{\varepsilon_7})^{\lambda/2 - 1}$ is bounded, by definition $g_{\varepsilon_1}(\theta^{\varepsilon_7})$ is bounded, and from \eqref{growthg01} $\theta^{\varepsilon_7}g'_{\varepsilon_1}(\theta^{\varepsilon_7})$ is bounded in $L^\infty(Q_T)$ as well. Moreover, by the boundedness of $\{\bF^{\varepsilon_7}\}_{\varepsilon_7 > 0}$ in $(L^\infty(Q_T))^{3\times 3}$ the same is true for $\tilde{\psi}'_{\varepsilon_2}(\bB^{\varepsilon_7})$, and from boundedness of $\{\bF^{\varepsilon_7}\}_{\varepsilon_7 > 0}$ in $(L^\infty(Q_T))^{3\times 3}\cap L^2(0, T; (W^{1,2}(\Omega))^{3\times 3})$, the integral $\int_{\Omega}|\nabla_x\bB^{\varepsilon_7}|^2\diff x$ is bounded as well. Hence, $I_2$ is uniformly bounded in $\varepsilon_7$. Now we can move to bounding the terms with $\varepsilon_4$ in front of them in \eqref{eq:step4_lambda_entr_equality}. Again, we note that by analogous arguments as above the functions $\tilde{\psi}'_{\varepsilon_2}(\bB^{\varepsilon_7})$ and $h_\lambda(\theta^{\varepsilon_7}) - g'_{\varepsilon_1}(\theta^{\varepsilon_7})(\theta^{\varepsilon_7})^{\lambda} + g_{\varepsilon_1}(\theta^{\varepsilon_7})(\theta^{\varepsilon_7})^{\lambda -1}$ are bounded in $L^\infty(Q_T)$, hence
\begin{equation}\label{ineq:step4_ineq_entr_lambda_1}
    \begin{split}
        &\varepsilon_4\left|\int_{\Omega}(h_\lambda(\theta^{\varepsilon_7}) - g'_{\varepsilon_1}(\theta^{\varepsilon_7})(\theta^{\varepsilon_7})^{\lambda} + g_{\varepsilon_1}(\theta^{\varepsilon_7})(\theta^{\varepsilon_7})^{\lambda -1})\tilde{\psi}'_{\varepsilon_2}(\bB^{\varepsilon_7}):(\nabla_x\bF^{\varepsilon_7} \cdot \nabla_x\bF^{\varepsilon_7, T})\diff x\right| \\
        &\leq \varepsilon_4\|\tilde{\psi}'_{\varepsilon_2}(\bB^{\varepsilon_7})(h_\lambda(\theta^{\varepsilon_7}) - g'_{\varepsilon_1}(\theta^{\varepsilon_7})(\theta^{\varepsilon_7})^{\lambda} + g_{\varepsilon_1}(\theta^{\varepsilon_7})(\theta^{\varepsilon_7})^{\lambda -1})\|_{\infty}\int_{\Omega}|\nabla_x\bF^{\varepsilon_7}|^2\diff x\\
        &\leq C(\varepsilon_2, \varepsilon_3)\varepsilon_4\int_{\Omega}|\nabla_x\bF^{\varepsilon_7}|^2\diff x,
    \end{split}
\end{equation}
which is bounded uniformly with respect to $\varepsilon_7$. Moving further, we bound using Young's inequality and \eqref{estkappa}
\begin{equation}\label{ineq:step4_ineq_entr_lambda_2}
    \begin{split}
        &\varepsilon_4\left|\int_{\Omega}\nabla_x(\tilde{\psi}'_{\varepsilon_2}(\bB^{\varepsilon_7})(h_\lambda(\theta^{\varepsilon_7}) - g'_{\varepsilon_1}(\theta^{\varepsilon_7})(\theta^{\varepsilon_7})^{\lambda} + g_{\varepsilon_1}(\theta^{\varepsilon_7})(\theta^{\varepsilon_7})^{\lambda -1})):(\nabla_x\bF^{\varepsilon_7}\bF^{\varepsilon_7, T} + \bF^{\varepsilon_7}\nabla_x\bF^{\varepsilon_7, T})\diff x\right|\\
        &\leq \varepsilon_4\int_{\Omega}|\tilde{\psi}''_{\varepsilon_2}(\bB^{\varepsilon_7})(h_\lambda(\theta^{\varepsilon_7}) - g'_{\varepsilon_1}(\theta^{\varepsilon_7})(\theta^{\varepsilon_7})^{\lambda} + g_{\varepsilon_1}(\theta^{\varepsilon_7})(\theta^{\varepsilon_7})^{\lambda -1}))||\nabla_x\bB^{\varepsilon_7}|^2\diff x\\
        &\qquad\qquad +2\varepsilon_4\int_{\Omega}|\tilde{\psi}'_{\varepsilon_2}(\bB^{\varepsilon_7})||\bF^{\varepsilon_7}||(1-\lambda)g'_{\varepsilon_1}(\theta^{\varepsilon_7})(\theta^{\varepsilon_7})^{\lambda-1} + (\lambda-1)g_{\varepsilon_1}(\theta^{\varepsilon_7})(\theta^{\varepsilon_7})^{\lambda-2}||\nabla_x\theta^{\varepsilon_7}||\nabla_x\bF^{\varepsilon_7}|\diff x\\
        &\leq \varepsilon_4\|\tilde{\psi}''_{\varepsilon_2}(\bB^{\varepsilon_7})(h_\lambda(\theta^{\varepsilon_7}) - g'_{\varepsilon_1}(\theta^{\varepsilon_7})(\theta^{\varepsilon_7})^{\lambda} + g_{\varepsilon_1}(\theta^{\varepsilon_7})(\theta^{\varepsilon_7})^{\lambda -1}))\|_{\infty}\int_{\Omega}|\nabla_x\bB^{\varepsilon_7}|^2\diff x\\
        &\qquad\qquad + \frac{4\varepsilon_4}{K^{-1}}(1-\lambda)\|\tilde{\psi}'_{\varepsilon_2}(\bB^{\varepsilon_7})\bF^{\varepsilon_7}(g'_{\varepsilon_1}(\theta^{\varepsilon_7})(\theta^{\varepsilon_7})^{\lambda/2} - g_{\varepsilon_1}(\theta^{\varepsilon_7})(\theta^{\varepsilon_7})^{\lambda/2-1})\|_\infty^2\int_{\Omega}|\nabla_x\bF^{\varepsilon_7}|^2\diff x\\
        &\qquad\qquad + (1-\lambda)\frac{K^{-1}}{2}\int_{\Omega}\frac{|\nabla_x\theta^{\varepsilon_7}|^2}{(\theta^{\varepsilon_7})^{2 - \lambda}}\diff x\\
        &\leq \varepsilon_4\|\tilde{\psi}''_{\varepsilon_2}(\bB^{\varepsilon_7})(h_\lambda(\theta^{\varepsilon_7}) - g'_{\varepsilon_1}(\theta^{\varepsilon_7})(\theta^{\varepsilon_7})^{\lambda} + g_{\varepsilon_1}(\theta^{\varepsilon_7})(\theta^{\varepsilon_7})^{\lambda -1}))\|_{\infty}\int_{\Omega}|\nabla_x\bB^{\varepsilon_7}|^2\diff x\\
        &\qquad\qquad + \frac{4\varepsilon_4}{K^{-1}}(1-\lambda)\|\tilde{\psi}'_{\varepsilon_2}(\bB^{\varepsilon_7})\bF^{\varepsilon_7}(g'_{\varepsilon_1}(\theta^{\varepsilon_7})(\theta^{\varepsilon_7})^{\lambda/2} - g_{\varepsilon_1}(\theta^{\varepsilon_7})(\theta^{\varepsilon_7})^{\lambda/2-1})\|_\infty^2\int_{\Omega}|\nabla_x\bF^{\varepsilon_7}|^2\diff x\\
        &\qquad\qquad + (1-\lambda)\int_{\Omega}\frac{\kappa}{2}\frac{|\nabla_x\theta^{\varepsilon_7}|^2}{(\theta^{\varepsilon_7})^{2 - \lambda}}\diff x\\
        &\leq C(\varepsilon_1,\varepsilon_2, \varepsilon_3,\varepsilon_6, K, \lambda)\varepsilon_4\int_{\Omega}|\nabla_x\bF^{\varepsilon_7}|^2\diff x + (1-\lambda)\int_{\Omega}\frac{\kappa}{2}\frac{|\nabla_x\theta^{\varepsilon_7}|^2}{(\theta^{\varepsilon_7})^{2 - \lambda}}\diff x
    \end{split}
\end{equation}
Here, we have bounded the first two terms with similar arguments as below \eqref{ineq:step4_bound_for_entr_lambda}, and the third one can be absorbed into the right-hand side of \eqref{eq:step4_lambda_entr_equality}. Combining all of our considerations from \eqref{ineq:step4_bound_for_entr_lambda} to \eqref{ineq:step4_ineq_entr_lambda_2} together with \eqref{eq:step4_lambda_entr_equality} we obtain an inequality similar to \eqref{est1nablathetalambda}, that is
\begin{align}\label{ineq:step4_nabla_theta_bound}
    \int_0^T\int_{\Omega}\frac{|\nabla_x\theta^{\varepsilon_7}|^2}{(\theta^{\varepsilon_7})^{\lambda}}\diff x\diff t \leq C, \qquad \lambda\in (1, 2),
\end{align}
and $C > 0$ is independent of $\varepsilon_7$. With this, we may finally obtain proper bounds on $\{e^{\varepsilon_7}\}_{\varepsilon_7>0}$. We test \eqref{eq:step3_internal_energy} by $e^{\varepsilon_7}$ to obtain
\begin{equation}\label{eq:step4_internal_energy_energy}
\begin{split}
    \frac{d}{dt}\int_{\Omega}|e^{\varepsilon_7}(t)|^2&\diff x + \varepsilon_7\int_{\Omega}|\nabla_x e^{\varepsilon_7}|^2\diff x\\
    &+ \int_{\Omega}\kappa\nabla_x\theta^{\varepsilon_7}\cdot \nabla_x e^{\varepsilon_7}\diff x = \int_{\Omega}(\bT_{\varepsilon_1, \varepsilon_3, \varepsilon_6} : \bD \bv^l_{\varepsilon_7})e^{\varepsilon_7}\diff x.
\end{split}
\end{equation}
The term on the right-hand side follows the same inequality as in \eqref{ineq:step1_weak_ineq_1}. Moreover, using \eqref{eq:step3_e_with_theta} we may write
\begin{equation}\label{eq:step4_energy_e_some_eq}
    \begin{split}
        &\int_{\Omega}\kappa\nabla_x\theta^{\varepsilon_7}\cdot \nabla_x e^{\varepsilon_7}\diff x = \int_{\Omega}\kappa|\nabla_x e^{\varepsilon_7}|^2\diff x +\int_{\Omega}\kappa\,\theta^{\varepsilon_7}g''_{\varepsilon_1}(\theta^{\varepsilon_7})\nabla_x\theta^{\varepsilon_7}\cdot\nabla_x e^{\varepsilon_7}\diff x\\
        &\qquad\qquad - \int_{\Omega}\kappa\,(g_{\varepsilon_1}(\theta^{\varepsilon_7}) - \theta^{\varepsilon_7} g'_{\varepsilon_1}(\theta^{\varepsilon_7}))\tilde{\psi}'_{\varepsilon_2}(\bF^{\varepsilon_7}\,\bF^{\varepsilon_7, T})\nabla_x\bF^{\varepsilon_7}\cdot\nabla_xe^{\varepsilon_7}\diff x.
    \end{split}
\end{equation}
Then, using \eqref{estkappa}, \eqref{ineq:step4_nabla_theta_bound}, the properties of the function $g_{\varepsilon_1}$, as well as the $L^2(0, T; W^{1,2}(\Omega))$ bounds on $\bF^{\varepsilon_7}$ and the Young inequality, we may deduce
\begin{equation}\label{ineq:step4_energy_e_some_bounds}
    \begin{split}
        &\left|\int_{\Omega}\kappa\,\theta^{\varepsilon_7}g''_{\varepsilon_1}(\theta^{\varepsilon_7})\nabla_x\theta^{\varepsilon_7}\cdot\nabla_x e^{\varepsilon_7}\diff x- \int_{\Omega}\kappa\,(g_{\varepsilon_1}(\theta^{\varepsilon_7}) - \theta^{\varepsilon_7} g'_{\varepsilon_1}(\theta^{\varepsilon_7}))\tilde{\psi}'_{\varepsilon_2}(\bF^{\varepsilon_7}\,\bF^{\varepsilon_7, T})\nabla_x\bF^{\varepsilon_7}\cdot\nabla_xe^{\varepsilon_7}\diff x\right|\\
        &\leq 16K\|(\theta^{\varepsilon_7})^{\lambda/2 + 1}g''_{\varepsilon_1}(\theta^{\varepsilon_7})\|^2_\infty\int_{\Omega}\frac{|\nabla_x\theta^{\varepsilon_7}|^2}{(\theta^{\varepsilon_7})^{\lambda}}\diff x +\int_{\Omega}\frac{\kappa}{4}|\nabla_xe^{\varepsilon_7}|^2\diff x \\
        &\qquad\qquad+ 16K\|(g_{\varepsilon_1}(\theta^{\varepsilon_7}) - \theta^{\varepsilon_7} g'_{\varepsilon_1}(\theta^{\varepsilon_7}))\tilde{\psi}'_{\varepsilon_2}(\bF^{\varepsilon_7})\|^2_\infty\int_{\Omega}|\nabla_x\bF^{\varepsilon_7}|^2\diff x + \int_{\Omega}\frac{\kappa}{4}|\nabla_xe^{\varepsilon_7}|^2\diff x\\
        &\leq C + \int_{\Omega}\frac{\kappa}{2}|\nabla_xe^{\varepsilon_7}|^2\diff x,
    \end{split}
\end{equation}
where $C> 0$ is independent of $\varepsilon_7$, and $\lambda$ in the considerations was taken from the interval $(1, 2)$. Hence, combining \eqref{eq:step4_energy_e_some_eq} and \eqref{ineq:step4_energy_e_some_bounds} together with \eqref{eq:step4_internal_energy_energy} we get
\begin{equation*}
    \begin{split}
        \frac{d}{dt}\int_{\Omega}|e^{\varepsilon_7}(t)|^2&\diff x + \varepsilon_7\int_{\Omega}|\nabla_x e^{\varepsilon_7}|^2\diff x + \int_{\Omega}\frac{\kappa}{2}|\nabla_x e^{\varepsilon_7}|^2\diff x \leq C + \|e^{\varepsilon_7}\|_{L^2(\Omega)}^2,
    \end{split}
\end{equation*}
which by Gr\"{o}nwall's inequality provides the bounds on $\{e^{\varepsilon_7}\}_{\varepsilon_7}$ in $L^\infty(0, T; L^2(\Omega))\cap L^2(0, T; W^{1,2}(\Omega))$. Note also that the bound on $\varepsilon_7\int_{Q_T}|\nabla_xe^{\varepsilon_7}|^2\diff x\diff t$ will allow us to converge $\varepsilon_7\Delta_x e^{\varepsilon_7}\rightarrow 0$, in the sense of distributions in \eqref{eq:step3_internal_energy}. Similarly as in Step 2, we may deduce from the equation boundedness of $\{\partial_t e^{\varepsilon_7}\}_{\varepsilon_7 > 0}$ in $L^2(0, T; (W^{1,2}(\Omega))^*)$. Having this, we can see that from the equation \eqref{eq:step3_e_with_theta}, we can deduce the bounds on $\{\theta^{\varepsilon_7}\}_{\varepsilon_7>0}$ in $L^\infty(0, T; L^2(\Omega))\cap L^2(0, T; W^{1,2}(\Omega))$ and $\{\partial_t\theta^{\varepsilon_7}\}_{\varepsilon_7>0}$ in $L^2(0, T; (W^{1,2}(\Omega))^*)$. At last, using the Banach--Alaoglu theorem and the Aubin--Lions lemma we obtain the following convergences
\begin{align*}
    \partial_t e^{\varepsilon_7} &\rightharpoonup^* \partial_t e, &&\text{ weakly* in } L^2(0, T; (W^{1,2}(\Omega))^*),\\
    e^{\varepsilon_7} &\rightharpoonup^* e, &&\text{ weakly* in }L^\infty(0, T; L^2(\Omega)),\\
    e^{\varepsilon_7} &\rightharpoonup e, &&\text{ weakly in }L^2(0, T; W^{1,2}(\Omega)),\\
    e^{\varepsilon_7} &\rightarrow e, &&\text{ strongly in }L^2(Q_T),\\
    e^{\varepsilon_7} &\rightarrow e, &&\text{ a.e. in }Q_T,
\end{align*}
and
\begin{align*}
    \partial_t \theta^{\varepsilon_7} &\rightharpoonup^* \partial_t \theta, &&\text{ weakly* in } L^2(0, T; (W^{1,2}(\Omega))^*),\\
    \theta^{\varepsilon_7} &\rightharpoonup \theta, &&\text{ weakly in }L^2(0, T; W^{1,2}(\Omega)),\\
    \theta^{\varepsilon_7} &\rightarrow \theta, &&\text{ strongly in }L^2(Q_T),\\
    \theta^{\varepsilon_7} &\rightarrow \theta, &&\text{ a.e. in }Q_T.
\end{align*}
Similarly as before, the obtained convergences allow us to converge from \eqref{eq:step3_velocity}-\eqref{eq:step3_internal_energy} to the system
\begin{align}
    &\partial_t \bv^l + \diver_x (\Lambda_{\varepsilon_3}(|\bv^l|^2)\bv^l\otimes \bv^l) - \diver_x \bT_{\varepsilon_1, \varepsilon_3, \varepsilon_6}(\theta, \bF,  \bD\bv^l) = 0, \label{eq:step4_velocity}\\
    &\partial_t \bF + \Diver_x(\bF\otimes\bv^l)- \Lambda_{\varepsilon_3}(|\bF|)\nabla_x \bv^l \bF\frac{(\theta - \varepsilon_6)_+}{\theta}- \varepsilon_4\Delta_x \bF + \frac{\tau(\theta)}{2}\frac{(\det\bF - \varepsilon_5)_+}{\det\bF}(\bF\,\bF^{T}\, \bF - \bF)= 0,\label{eq:step4_F}\\
    &\partial_te + \diver_x(e \bv^l) - \diver_x(\kappa(\theta)\nabla_x \theta)- \bT_{\varepsilon_1, \varepsilon_3,\varepsilon_6}(\theta, \bF, \bD\bv^l) : \bD\bv^l = 0,\label{eq:step4_internal_energy}
\end{align}
where \eqref{eq:step4_velocity} is to be understood as projections onto the Galerkin basis, and \eqref{eq:step4_F}, \eqref{eq:step4_internal_energy} are to be understood in the weak sense in the sense of an operator on a predual space to the time derivative. Note that from \eqref{eq:step3_e_with_theta} we keep
\begin{equation}\label{eq:step4_e_with_theta}
\begin{split}
    e(t, x) = \theta(t, x) + (g_{\varepsilon_1}(\theta(t, x)) - \theta(t, x) g'_{\varepsilon_1}(\theta(t, x)))\tilde{\psi}_{\varepsilon_2}(\bF\bF^{T}),
\end{split}
\end{equation}
for $\tilde{\psi}_{\varepsilon_2}$ defined as in \eqref{eq:modified_psi_varepsilon2}.\\

\noindent\underline{Step 5: convergence with $l\to +\infty$.} Let us start with the bounds on velocity. Since \eqref{eq:step4_velocity} is of Galerkin form, we may test it by $\bv^l$ to obtain the energy equality
\begin{equation}\label{eq:energy_before_limit_step4}
    \begin{split}
        \int_{\Omega}|\bv^l(t)|^2\diff x + \int_0^t\int_{\Omega}\bT_{\varepsilon_1, \varepsilon_3, \varepsilon_6}:\bD\bv^l\diff x\diff s = \int_{\Omega}|\bv^l(0)|^2\diff x - \int_0^t\int_{\partial\Omega}|\bv^l|^2\diff S(x)\diff s,
    \end{split}
\end{equation}
as the boundary term is negative we can drop it, and using \eqref{estkappa} together with Young's inequality we may deduce
\begin{equation*}
    \begin{split}
        \int_{\Omega}|\bv^l(t)|^2\diff x + K^{-1}\int_0^t\int_{\Omega}|\bD\bv^l|^2\diff x\diff s \leq \frac{2K}{\varepsilon^2_3}|Q_T| + \frac{K^{-1}}{2}\int_0^t\int_{\Omega}|\bD\bv^l|^2\diff x\diff s +\int_\Omega|\bv^l(0)|^2\diff x,
    \end{split}
\end{equation*}
which readily implies that $\{\bv^l\}_{l\in\N}$ is bounded in $L^\infty(0, T; L^2(\Omega))\cap L^2(0, T; W^{1,2}_{\bn,\diver}(\Omega))$. Moreover, following the reasoning in Corollary \ref{corpropest} we can see that $\{\partial_t\bv^l\}_{l\in\N}$ is bounded in $L^{2}(0, T; (W^{1,2}_{\bn,\diver}(\Omega))^*)$. Thus, by the Banach--Alaoglu theorem, the Aubin--Lions lemma, and the interpolation, we get that (up to the subsequence)
\begin{align*}
    \partial_t \bv^l &\rightharpoonup^* \partial_t \bv, &&\text{ weakly* in }L^{2}(0, T; (W^{1,2}_{\bn,\diver}(\Omega))^*),\\
    \bv^l &\rightharpoonup^* \bv, &&\text{ weakly* in }L^{\infty}(0, T; L^2(\Omega)),\\
    \bv^l &\rightharpoonup \bv, &&\text{ weakly in }L^2(0, T; W^{1,2}_{\bn,\diver}(\Omega)),\\
    \bv^l &\rightarrow \bv, &&\text{ strongly in }L^q(Q_T),\, q< 10/3,\\
    \bv^l &\rightarrow \bv, &&\text{ a.e. in }Q_T.
\end{align*}
Furthermore, by the same argument as in the previous step, we obtain the convergences for the elastic tensor
\begin{align*}
    \partial_t \bF^l &\rightharpoonup^* \partial_t \bF, &&\text{ weakly* in }L^2(0, T; ((W^{1,2}(\Omega))^{3\times 3})^*),\\
    \bF^l &\rightharpoonup \bF, &&\text{ weakly in }L^2(0, T; (W^{1,2}(\Omega))^{3\times 3}),\\
    \bF^l&\rightharpoonup^* \bF, &&\text{ weakly* in }(L^\infty(Q_T))^{3\times 3},\\
    \bF^l &\rightarrow \bF, &&\text{ strongly in }(L^q(Q_T))^{3\times 3},\ q< +\infty,\\
    \bF^l &\rightarrow \bF, &&\text{ a.e. in }Q_T.
\end{align*}
Note that we only lost here some regularity on the time derivative of $\bF$, due to the loss of the $L^\infty(Q_T)$ bounds on the gradient of the velocity. At this point we need to establish compactness of the temperature sequence $\{\theta^l\}_{l\in\N}$. Due to the loss of the regularity of the symmetric gradient of the velocity, we cannot use the simple technique of testing \eqref{eq:step4_internal_energy} by $e^l$, since the term $\bT_{\varepsilon_1, \varepsilon_3, \varepsilon_6}:\bD\bv^l$ is not bounded in $L^2(Q_T)$. Thus, we come back to the technique presented in the Subsection \ref{subsection:almost_everywhere_conv_theta} about almost everywhere convergence of the temperature. Let us begin with the observation, that one can still derive \eqref{eq:step4_lambda_entr_equality} but with $\varepsilon_7=0$, that is
\begin{equation}\label{eq:step5_lambda_entr_equality}
    \begin{split}
        &\frac{d}{dt}\int_{\Omega}\eta^{l}_\lambda\diff x \\
        &\quad- \varepsilon_4\int_{\Omega}(h_\lambda(\theta^{l}) - g'_{\varepsilon_1}(\theta^{l})(\theta^{l})^{\lambda} + g_{\varepsilon_1}(\theta^{l})(\theta^{l})^{\lambda -1})\tilde{\psi}'_{\varepsilon_2}(\bB^{l}):(\nabla_x\bF^{l} \cdot \nabla_x\bF^{l, T})\diff x\\
        &\quad- \varepsilon_4\int_{\Omega}\nabla_x(\tilde{\psi}'_{\varepsilon_2}(\bB^{l})(h_\lambda(\theta^{l}) - g'_{\varepsilon_1}(\theta^{l})(\theta^{l})^{\lambda} + g_{\varepsilon_1}(\theta^{l})(\theta^{l})^{\lambda -1})):(\nabla_x\bF^{l}\bF^{l, T} + \bF^{l}\nabla_x\bF^{l, T})\diff x\\
        &\quad-\int_{\Omega}\tau\frac{(\det\bF^{l} - \varepsilon_5)_+}{\det\bF^{l}}((\bB^{l})^2 - \bB^{l}):\tilde{\psi}'_{\varepsilon_2}(\bB^{l})(h_\lambda(\theta^{l}) - g'_{\varepsilon_1}(\theta^{l})(\theta^{l})^{\lambda})\diff x\\
        &\quad+\int_{\Omega}2(\bB^{l} - \bI):\bD\bv^l\Lambda_{\varepsilon_3}(|\bF^{l}|)\frac{(\theta^{l} - \varepsilon_6)_+}{\theta^{l}}(h_\lambda(\theta^{l}) - g'_{\varepsilon_1}(\theta^{l})(\theta^{l})^{\lambda})\diff x\\
        &=(1-\lambda)\int_{\Omega}\kappa\frac{|\nabla_x\theta^{l}|^2}{(\theta^{l})^{2 - \lambda}}\diff x + \int_{\Omega}2\nu\frac{|\bD\bv^l|^2}{(\theta^{l})^{2 - \lambda}} + \tau\frac{(\det\bF^{l} - \varepsilon_5)_+}{\det\bF^{l}}\frac{g_{\varepsilon_1}(\theta^{l})((\bB^{l})^2 - \bB^{l}):\tilde{\psi}'_{\varepsilon_2}(\bB^{l})}{(\theta^{l})^{1 - \lambda}}\diff x.
    \end{split}
\end{equation}
Furthermore, the established bounds \eqref{ineq:step4_ineq_entr_lambda_0}, \eqref{ineq:step4_ineq_entr_lambda_00}, \eqref{ineq:step4_ineq_entr_lambda_1}, \eqref{ineq:step4_ineq_entr_lambda_2} are independent of $l$, which means that \eqref{ineq:step4_nabla_theta_bound} holds as well, that is
\begin{align}\label{ineq:step5_nabla_theta_bound}
    \int_0^T\int_{\Omega}\frac{|\nabla_x\theta^{l}|^2}{(\theta^{l})^{\lambda}}\diff x\diff t \leq C(\varepsilon_1,\varepsilon_2, \varepsilon_3,\varepsilon_6, K, \lambda)\varepsilon_4\int_{Q_T}|\nabla_x\bF^{l}|^2\diff x\diff t + C(\lambda, K, \varepsilon_3), \qquad \lambda\in (1, 2),
\end{align}
where $C$ is independent of $l$, and following the argument under \eqref{est1nablathetalambda} we can deduce that $\{\theta^l\}_{l\in\N}$ is bounded in $L^q(Q_T)$ for $q < 5/3$ and $\{\nabla_x\theta^l\}_{l\in\N}$ is bounded in $L^{q}(Q_T)$ for $q < 5/4$. With this, we wish to follow the argument in Subsection \ref{subsection:almost_everywhere_conv_theta}, which means that we need the equations for the entropy $\eta^{l}$ and $\tilde{\psi}_{\varepsilon_2}$. At this point of the approximation, the functions $1/\theta$ and $\tilde{\psi}'_{\varepsilon_2}$ are still proper test functions, hence following the argument in Proposition \ref{otherform} one can verify that
\begin{equation}\label{eq:step5_entropy}
    \begin{split}
        &\partial_t\eta^{l} + \diver_x (\eta^{l}\bv^l) - \diver_x\left(\kappa\frac{\nabla_x\theta^l}{\theta^{l}}\right) + \varepsilon_4\frac{g_{\varepsilon_1}(\theta^{l})}{\theta^{l}}\tilde{\psi}'_{\varepsilon_2}(\bB^{l}) :(\bF^{l}\Delta_x\bF^{l, T} + \Delta_x\bF^{l}\bF^{l, T})\\
        &= \frac{\kappa|\nabla_x\theta^{l}|^2}{(\theta^{l})^2} + \frac{2\nu|\bD\bv^l|^2}{\theta^{l}} + \tau \frac{(\det\bF^{l} - \varepsilon_5)_+}{\det\bF^{l}}\frac{g_{\varepsilon_1}(\theta^{l})\tilde{\psi}_{\varepsilon_2}'(\bB^{l}):((\bB^{l})^2 - \bB^{l})}{\theta^{l}}, 
    \end{split}
\end{equation}
in the sense of operators on $C^1([0, T]\times\overline{\Omega})$, for $\eta^{l} = \ln\theta^{l} - g'_{\varepsilon_1}(\theta^{l})\tilde{\psi}_{\varepsilon_2}(\bB^{l})$, and following the procedure under \eqref{divetakqkcomp} one obtains
\begin{equation}\label{eq:step5_tilde_psi}
    \begin{split}
        &\partial_t\tilde{\psi}_{\varepsilon_2}(\bB^{l}) + \diver_x(\tilde{\psi}_{\varepsilon_2}(\bB^{l})\bv^l) - \varepsilon_4\tilde{\psi}'_{\varepsilon_2}(\bB^{l}) :(\bF^{l}\Delta_x\bF^{l, T} + \Delta_x\bF^{l}\bF^{l, T})\\
        &+ \tau\frac{(\det\bF^{l} - \varepsilon_5)_+}{\det\bF^{l}}\tilde{\psi}_{\varepsilon_2}'(\bB^{l}):((\bB^{l})^2 - \bB^{
        l}) = 2(\bB^{l} - \bI):\bD\bv^l\Lambda_{\varepsilon_3}(|\bF^l|)\frac{(\theta^{l} - \varepsilon_6)_+}{\theta^{l}}.
    \end{split}
\end{equation}
in the sense of operators on $C^1([0, T]\times\overline{\Omega})$, for $\tilde{\psi}_{\varepsilon_2}$ defined as in \eqref{eq:modified_psi_varepsilon2}. Now, the only obstacle to copy verbatum the argument in Subsection \ref{subsection:almost_everywhere_conv_theta} are the properties \eqref{divetakqkcomp} and \eqref{divpsiBkvkcomp}. Note that the terms without $\varepsilon_4$ can be treated the same way as in Subsection \ref{subsection:almost_everywhere_conv_theta}, that is they are $L^1$ functions, which space embeds compactly into $(W^{1,5}(Q_T))^*$. In fact, the term with $\varepsilon_4$ is also proper, as at this point we keep $\varepsilon_4$ fixed, but since in the next step we shall converge with $\varepsilon_4\to 0^+$, let us show the exact bounds at this point, so later it will be clear that those are also independent of $\varepsilon_4$. Hence, let us show that $\varepsilon_4\frac{g_{\varepsilon_1}(\theta^{l})}{\theta^{l}}\tilde{\psi}'_{\varepsilon_2}(\bB^{l}) :(\bF^{l}\Delta_x\bF^{l, T} + \Delta_x\bF^{l}\bF^{l, T})$ defines a compact sequence in $(W^{1,5}(Q_T))^*$, where we choose the non-zero trace Sobolev space to keep in mind the boundary conditions. The term in \eqref{eq:step5_tilde_psi} can be treated the same way. Fix $\varphi\in W^{1,5}(Q_T)$ with $\|\varphi\|_{W^{1,5}}\leq 1$. Then,
\begin{equation}\label{eq:step5_operator}
    \begin{split}
        &-\left\langle\varepsilon_4\frac{g_{\varepsilon_1}(\theta^{l})}{\theta^{l}}\tilde{\psi}'_{\varepsilon_2}(\bB^{l}) :(\bF^{l}\Delta_x\bF^{l, T} + \Delta_x\bF^{l}\bF^{l, T}), \varphi\right\rangle_{((W^{1,5}(Q_T))^*, W^{1,5}(Q_T))}\\
        &= 2\int_{Q_T}\varepsilon_4\varphi\frac{g_{\varepsilon_1}(\theta^{l})}{\theta^{l}}\tilde{\psi}'_{\varepsilon_2}(\bB^{l}):(\nabla_x\bF^{l}\cdot\nabla_x\bF^{l, T})\diff x\diff t\\
        &\qquad + \int_{Q_T}\varepsilon_4\varphi\frac{g_{\varepsilon_1}(\theta^{l})}{\theta^{l}}\mathbf{1}_{\{\det\bF^{l} > \varepsilon_2\}}|(\bB^{l})^{-1}\nabla_x\bB^{l}|^2\diff x\diff t\\
        &\qquad +\int_{Q_T}\varepsilon_4\varphi\frac{g_{\varepsilon_1}(\theta^{l}) - \theta^{l}g'_{\varepsilon_1}(\theta^{l})}{(\theta^{l})^2}\tilde{\psi}'_{\varepsilon_2}(\bB^{l}):(\bF^{l}(\nabla_x\theta^{l}\cdot\nabla_x\bF^{l, T}) + (\nabla_x\theta^{l}\cdot\nabla_x\bF^{l})\bF^{l, T})\diff x\diff t\\
        &\qquad +\int_{Q_T}\varepsilon_4\frac{g_{\varepsilon_1}(\theta^{l})}{\theta^{l}}\tilde{\psi}'_{\varepsilon_2}(\bB^{l}):(\bF^{l}(\nabla_x\varphi\cdot\nabla_x\bF^{l, T}) + (\nabla_x\varphi\cdot\nabla_x\bF^{l})\bF^{l, T})\diff x\diff t\\
        & =: I^{l}_1 + I^{l}_2 + I^{l}_3 + I^{l}_4.
    \end{split}
\end{equation}
Here, we bound
\begin{equation}\label{ineq:step5_bound_operator_1}
    \begin{split}
        |I^{l}_1| \leq 2\left\|\frac{g_{\varepsilon_1}(\theta^{l})}{\theta^{l}}\tilde{\psi}'_{\varepsilon_2}(\bB^{l})\right\|_\infty\varepsilon_4\int_{Q_T}|\varphi||\nabla_x\bF^{l}|^2\diff x\diff t \leq C(\varepsilon_2, \varepsilon_3)\varepsilon_4\int_{Q_T}|\nabla_x\bF^l|^2\diff x\diff t,
    \end{split}
\end{equation}
and
\begin{equation}\label{ineq:step5_bound_operator_2}
    \begin{split}
        |I^{l}_2|\leq \left\|\frac{g_{\varepsilon_1}(\theta^{l})}{\theta^{l}}(\bB^{l})^{-2}\right\|_\infty\|(\bF^{l})^2\|_\infty\,\varepsilon_4\int_{Q_T}|\varphi||\nabla_x\bF^{l}|^2\diff x\diff t\leq C(\varepsilon_2, \varepsilon_3)\varepsilon_4\int_{Q_T}|\nabla_x\bF^l|^2\diff x\diff t,
    \end{split}
\end{equation}
as well as using H\"{o}lder's inequality
\begin{equation}\label{ineq:step5_bound_operator_3}
    \begin{split}
        |I^{l}_3|&\leq \sqrt{\varepsilon_4}\|(g_{\varepsilon_1}(\theta^{l}) - \theta^{l}g'_{\varepsilon_1}(\theta^{l}))\tilde{\psi}'_{\varepsilon_2}(\bB^{l})\|_\infty\|\varphi\|_\infty\left\|\frac{\nabla_x\theta^{l}}{(\theta^{l})^2}\right\|_2 \|\sqrt{\varepsilon_4}\nabla_x\bF^{l}\|_2\\
        &\leq \sqrt{\varepsilon_4}C(\varepsilon_1,\varepsilon_2, \varepsilon_3,\varepsilon_6, K)\left(\varepsilon_4\int_{Q_T}|\nabla_x\bF^{l}|^2\diff x\diff t + 1\right)\|\sqrt{\varepsilon_4}\nabla_x\bF^{l}\|_2,
    \end{split}
\end{equation}
and
\begin{equation}\label{ineq:step5_bound_operator_4}
    \begin{split}
        |I^{l}_4|\leq 2\sqrt{\varepsilon_4}\left\|\frac{g_{\varepsilon_1}(\theta^{l})}{\theta^{l}}\tilde{\psi}'_{\varepsilon_2}(\bB^{l})\right\|_\infty\|\bF^{l}\|_\infty\|\nabla_x\varphi\|_2\|\sqrt{\varepsilon_4}\nabla_x\bF^{l}\|_2 \leq \sqrt{\varepsilon_4}C(\varepsilon_2, \varepsilon_3)\|\sqrt{\varepsilon_4}\nabla_x\bF^{l}\|_2
    \end{split}
\end{equation}
which means that $I^{l}_1$, $I^{l}_2$, $I^{l}_3$, and $I^{l}_4$ are operators defined by $L^1$ functions, which are uniformly bounded in $L^1$ with respect to the $l$. Hence, by Sobolev embeddings, we can deduce that there exists $T\in (W^{1,5}(Q_T))^*$ such that (up to the subsequence)
$$
I^{l}_1 + I^{l}_2 + I^{l}_3 + I^{l}_4 \rightarrow (T, \varphi)_{((W^{1,5}(Q_T))^*, W^{1,5}(Q_T))}\text{, as }l\to +\infty.
$$
Thus, indeed $\varepsilon_4\frac{g_{\varepsilon_1}(\theta^{l})}{\theta^{l}}\tilde{\psi}'_{\varepsilon_2}(\bB^{l}) :(\bF^{l}\Delta_x\bF^{l, T} + \Delta_x\bF^{l}\bF^{l, T})$ defines a compact sequence in $(W^{1,5}(Q_T))^*$, which is what we wanted to show. Here, we may finally follow Subsection \ref{subsection:almost_everywhere_conv_theta} to deduce
\begin{align*}
    \theta^{l}\rightarrow\theta\quad\text{ a.e. in }Q_T,
\end{align*}
which combined with \eqref{ineq:step5_nabla_theta_bound}, and our comments below \eqref{ineq:step5_nabla_theta_bound} gives us
\begin{align*}
    \partial_t \theta^l &\rightharpoonup^* \partial_t \theta, &&\text{ weakly* in }L^q(0, T; (W^{1,q}(\Omega))^*), q > 5\\
    \theta^l &\rightarrow \theta, &&\text{ strongly in }L^q(Q_T),\, q< 5/3,\\
    \nabla_x\theta^l &\rightharpoonup \nabla_x\theta, &&\text{ weakly in }L^q(Q_T),\, q< 5/4,\\
    \theta^l &\rightarrow \theta, &&\text{ a.e. in }Q_T,\\
    (\theta^l)^{\frac{2-\lambda}{2}} &\rightharpoonup\theta^{\frac{2-\lambda}{2}}, &&\text{ weakly in }L^2(0, T; W^{1,2}(\Omega)),\, \lambda\in (0, 1).
\end{align*}
Moreover, analysing \eqref{eq:step5_entropy}, one can obtain similarly to \eqref{estlnt} and \eqref{estgradln} we can deduce
\begin{align*}
    \ln\theta^{l}\rightharpoonup\ln\theta,\text{ weakly in }L^2(0, T; W^{1,2}(\Omega)).
\end{align*}
Now, from \eqref{eq:step4_e_with_theta} we can see that $e^l$ inherits its regularity from $\theta^l$ and $\bF^l$. One obtains
\begin{align*}
    \partial_t e^l &\rightharpoonup^* \partial_t e, &&\text{ weakly* in }L^q(0, T; (W^{1,q}(\Omega))^*), q > 5\\
    e^l &\rightarrow e, &&\text{ strongly in }L^q(Q_T),\, q< 5/3,\\
    e^l &\rightarrow e, &&\text{ a.e. in }Q_T.
\end{align*}
At last, to converge in equations \eqref{eq:step4_velocity}--\eqref{eq:step4_internal_energy} we need to show that
$$
\int_{0}^t\int_{\Omega}\nu(\theta^l)|\bD\bv^l|^2\diff x\diff s \rightarrow \int_{0}^t\int_{\Omega}\nu(\theta)|\bD\bv|^2\diff x\diff s, \text{ as }l\to +\infty.
$$
We shall employ the classical monotonicity method. Using the proven convergences for variables, we can converge with \eqref{eq:step4_velocity} to obtain
$$
\partial_t \bv + \diver_x (\Lambda_{\varepsilon_3}(|\bv|^2)\bv\otimes \bv) - \diver_x \bT_{\varepsilon_1, \varepsilon_3, \varepsilon_6}(\theta, \bF,  \bD\bv) = 0,
$$
in the sense of distributions. Testing the equation above by $\bv$ we get the energy equality in the limiting equation
\begin{equation}\label{eq:energy_limit_step4}
    \begin{split}
        \int_{\Omega}|\bv(t)|^2\diff x + \int_0^t\int_{\Omega}\bT_{\varepsilon_1, \varepsilon_3, \varepsilon_6}:\bD\bv\diff x\diff s = \int_{\Omega}|\bv(0)|^2\diff x - \int_0^t\int_{\partial\Omega}|\bv|^2\diff S(x)\diff s
    \end{split}
\end{equation}
Now, we compare it to the limit of \eqref{eq:energy_before_limit_step4}. We have
\begin{equation}\label{eq:step4_for_mono_trick_gradient_v}
    \begin{split}
        &\limsup_{l\to +\infty}\int_0^t\int_{\Omega}\nu(\theta^l)|\bD\bv^l|^2\diff x\diff s = \limsup_{l\to +\infty}\left(\int_\Omega|\bv^l(0)|^2\diff x - \int_{\Omega}|\bv^l(t)|^2\diff x\right)\\
        &\qquad- \liminf_{l\to +\infty}\left(\int_0^t\int_{\partial\Omega}|\bv^l|^2\diff S(x)\diff s +\int_0^t\int_{\Omega}2\Lambda_{\varepsilon_3}(|\bF^l|)g_{\varepsilon_1}(\theta^l)\bF^l\,\bF^{l,T}\frac{(\theta^l - \varepsilon_6)_+}{\theta^l}: \bD\bv^l\diff x\diff s\right)
    \end{split}
\end{equation}
By weak lower-semi continuity of norms, and convergence of initial datum we deduce
\begin{equation*}
    \begin{split}
        \limsup_{l\to +\infty}\left(\int_\Omega|\bv^l(0)|^2\diff x - \int_{\Omega}|\bv^l(t)|^2\diff x\right) \leq \int_\Omega|\bv(0)|^2\diff x - \int_{\Omega}|\bv(t)|^2\diff x.
    \end{split}
\end{equation*}
Using the trace theorem, and again the lower semi-continuity of norms
\begin{align*}
    - \liminf_{l\to +\infty}\int_0^t\int_{\partial\Omega}|\bv^l|^2\diff S(x)\diff s\leq -\int_0^t\int_{\partial\Omega}|\bv|^2\diff S(x)\diff s.
\end{align*}
At last, by the strong convergence of $\bF^l$, the almost everywhere convergence of $\theta^l$, the Vitali's convergence theorem, and the weak convergence of $\bD\bv^l$ we imply
\begin{equation*}
    \begin{split}
        \lim_{l\to +\infty}\int_0^t\int_{\Omega}2\Lambda_{\varepsilon_3}(|\bF^l|)g_{\varepsilon_1}(\theta^l)\bF^l\,\bF^{l,T}\frac{(\theta^l - \varepsilon_6)_+}{\theta^l}: \bD\bv^l\diff x\diff s = \int_0^t\int_{\Omega}2\Lambda_{\varepsilon_3}(|\bF|)g_{\varepsilon_1}(\theta)\bF\,\bF^{T}\frac{(\theta - \varepsilon_6)_+}{\theta}: \bD\bv\diff x\diff s
    \end{split}
\end{equation*}
Combining the above with \eqref{eq:step4_for_mono_trick_gradient_v} we get
\begin{equation*}
    \begin{split}
        &\limsup_{l\to +\infty}\int_0^t\int_{\Omega}\nu(\theta^l)|\bD\bv^l|^2\diff x\diff s \leq \int_\Omega|\bv(0)|^2\diff x - \int_{\Omega}|\bv(t)|^2\diff x - \int_0^t\int_{\partial\Omega}|\bv|^2\diff S(x)\diff s\\
        &\qquad-\int_0^t\int_{\Omega}2\Lambda_{\varepsilon_3}(|\bF|)g_{\varepsilon_1}(\theta)\bF\,\bF^{T}\frac{(\theta - \varepsilon_6)_+}{\theta}: \bD\bv\diff x\diff s\\
        &=\int_0^t\int_{\Omega}\nu(\theta)|\bD\bv|^2\diff x\diff s.
    \end{split}
\end{equation*}
Here, we note that due to the boundedness of $\nu$ from \eqref{estkappa}, almost everywhere convergence of $\theta^l$ and the weak convergence of $\bD\bv^l$ we can obtain
$$
\sqrt{\nu(\theta^l)}\bD\bv^l\rightharpoonup\sqrt{\nu(\theta)}\bD\bv\text{, weakly in }L^2(Q_T),
$$
thus by weak lower-semicontinuity of norms
$$
\int_0^t\int_{\Omega}\nu(\theta)|\bD\bv|^2\diff x\diff s \leq \liminf_{l\to +\infty}\int_0^t\int_{\Omega}\nu(\theta^l)|\bD\bv^l|^2\diff x\diff s.
$$
Therefore, we have shown what we needed, i.e.
$$
\int_0^t\int_{\Omega}\nu(\theta)|\bD\bv|^2\diff x\diff s = \lim_{l\to +\infty}\int_0^t\int_{\Omega}\nu(\theta^l)|\bD\bv^l|^2\diff x\diff s.
$$
Using all our proven convergences, we may converge from \eqref{eq:step4_velocity}-\eqref{eq:step4_internal_energy} to the system
\begin{align}
    &\partial_t \bv + \diver_x (\Lambda_{\varepsilon_3}(|\bv|^2)\bv\otimes \bv) - \diver_x \bT_{\varepsilon_1, \varepsilon_3, \varepsilon_6}(\theta, \bF,  \bD\bv) = 0, \label{eq:step5_velocity}\\
    &\partial_t \bF + \Diver_x(\bF\otimes\bv)- \Lambda_{\varepsilon_3}(|\bF|)\nabla_x \bv \bF\frac{(\theta - \varepsilon_6)_+}{\theta}- \varepsilon_4\Delta_x \bF + \frac{\tau(\theta)}{2}\frac{(\det\bF - \varepsilon_5)_+}{\det\bF}(\bF\,\bF^{T}\, \bF - \bF)= 0,\label{eq:step5_F}\\
    &\partial_te + \diver_x(e \bv) - \diver_x(\kappa(\theta)\nabla_x \theta)- \bT_{\varepsilon_1, \varepsilon_3,\varepsilon_6}(\theta, \bF, \bD\bv) : \bD\bv^l = 0,\label{eq:step5_internal_energy}
\end{align}
where  \eqref{eq:step5_velocity}--\eqref{eq:step5_internal_energy} are to be understood in the weak sense in the sense of an operator on a predual space to the time derivative. Note that from \eqref{eq:step4_e_with_theta} we keep
\begin{equation}\label{eq:step5_e_with_theta}
\begin{split}
    e(t, x) = \theta(t, x) + (g_{\varepsilon_1}(\theta(t, x)) - \theta(t, x) g'_{\varepsilon_1}(\theta(t, x)))\tilde{\psi}_{\varepsilon_2}(\bF\bF^{T}),
\end{split}
\end{equation}
for $\tilde{\psi}_{\varepsilon_2}$ defined as in \eqref{eq:modified_psi_varepsilon2}.

Beforem moving onto the next step, let us show one more convergence which will help us identify the operator $T$. Consider the equation \eqref{eq:step4_F} tested by $\bF^l$, which gives as energy equality
\begin{equation}\label{eq:step5_energy_F_l}
\begin{split}
    &\varepsilon_4\int_0^t\int_{\Omega}|\nabla_x\bF^l|^2\diff x\diff\tau = -\int_{\Omega}|\bF^l(t, x)|^2\diff x + \int_{\Omega}|\bF^l_0(x)|^2\diff x\\
    &\qquad\qquad+ \int_0^t\int_{\Omega}\Lambda_{\varepsilon_3}(|\bF^l|)\nabla_x \bv^l :\bF^l\bF^{l,T}\frac{(\theta^l - \varepsilon_6)_+}{\theta^l} - \frac{\tau(\theta^l)}{2}\frac{(\det\bF^l - \varepsilon_5)_+}{\det\bF^l}(|\bF^l\bF^{l, T}|^2 - |\bF^l|^2)\diff x\diff \tau,
\end{split}
\end{equation}
as well as the equation \eqref{eq:step5_F} tested by $\bF$ which provides
\begin{equation}\label{eq:step5_energy_F}
\begin{split}
    &\varepsilon_4\int_0^t\int_{\Omega}|\nabla_x\bF|^2\diff x\diff\tau = -\int_{\Omega}|\bF(t, x)|^2\diff x + \int_{\Omega}|\bF_0(x)|^2\diff x\\
    &\qquad\qquad+ \int_0^t\int_{\Omega}\Lambda_{\varepsilon_3}(|\bF|)\nabla_x \bv^l :\bF\bF^{T}\frac{(\theta - \varepsilon_6)_+}{\theta} - \frac{\tau(\theta)}{2}\frac{(\det\bF - \varepsilon_5)_+}{\det\bF}(|\bF\bF^{ T}|^2 - |\bF|^2)\diff x\diff \tau.
\end{split}
\end{equation}
Now using the already established convergences for $\bF^l$, $\nabla_x\bv^l$ and $\theta^l$ we can deduce
\begin{equation*}
    \begin{split}
        \int_0^t\int_{\Omega}\Lambda_{\varepsilon_3}(|\bF^l|)\nabla_x \bv^l &:\bF^l\bF^{l,T}\frac{(\theta^l - \varepsilon_6)_+}{\theta^l} - \frac{\tau(\theta^l)}{2}\frac{(\det\bF^l - \varepsilon_5)_+}{\det\bF^l}(|\bF^l\bF^{l, T}|^2 - |\bF^l|^2)\diff x\diff \tau\\
        &\rightarrow \int_0^t\int_{\Omega}\Lambda_{\varepsilon_3}(|\bF|)\nabla_x \bv :\bF\bF^{T}\frac{(\theta - \varepsilon_6)_+}{\theta} - \frac{\tau(\theta)}{2}\frac{(\det\bF - \varepsilon_5)_+}{\det\bF}(|\bF\bF^{ T}|^2 - |\bF|^2)\diff x\diff \tau.
    \end{split}
\end{equation*}
Thus, applying as well weak lower semicontinuity of norms we obtain
\begin{equation*}
\begin{split}
    &\limsup_{l\to +\infty}\left(\varepsilon_4\int_0^t\int_{\Omega}|\nabla_x\bF^l|^2\diff x\diff\tau \right) \leq  -\int_{\Omega}|\bF(t, x)|^2\diff x + \int_{\Omega}|\bF_0(x)|^2\diff x\\
    &\qquad\qquad+ \int_0^t\int_{\Omega}\Lambda_{\varepsilon_3}(|\bF|)\nabla_x \bv :\bF\bF^{T}\frac{(\theta - \varepsilon_6)_+}{\theta} - \frac{\tau(\theta)}{2}\frac{(\det\bF - \varepsilon_5)_+}{\det\bF}(|\bF\bF^{ T}|^2 - |\bF|^2)\diff x\diff \tau\\
    & = \varepsilon_4\int_0^t\int_{\Omega}|\nabla_x\bF|^2\diff x\diff\tau.
\end{split}
\end{equation*}
Since by weak lower semicontinuity
$$
\liminf_{l\to +\infty}\left(\varepsilon_4\int_0^t\int_{\Omega}|\nabla_x\bF^l|^2\diff x\diff\tau\right) \geq \varepsilon_4\int_0^t\int_{\Omega}|\nabla_x\bF|^2\diff x\diff\tau,
$$
this implies
\begin{align}\label{conv:step5_nabla_F}
\lim_{l\to +\infty}\left(\varepsilon_4\int_0^t\int_{\Omega}|\nabla_x\bF^l|^2\diff x\diff\tau\right) = \varepsilon_4\int_0^t\int_{\Omega}|\nabla_x\bF|^2\diff x\diff\tau,
\end{align}
and in consequence
$$
\bF^{l} \rightarrow \bF,\text{ strongly in }L^2(0, T; (W^{1,2}(\Omega))^{3\times 3}).
$$
Of a particular attention is a fact, that we can now identify the operator $T$ defined under \eqref{ineq:step5_bound_operator_4}. Looking at the definitions of $I^l_i$ in \eqref{eq:step5_operator} the now established strong convergence of the gradient of $\bF^l$ allows us to see that
\begin{align*}
    &I_1^l \rightarrow 2\int_{Q_T}\varepsilon_4\varphi\frac{g_{\varepsilon_1}(\theta^{})}{\theta^{}}\tilde{\psi}'_{\varepsilon_2}(\bB^{}):(\nabla_x\bF^{}\cdot\nabla_x\bF^{ T})\diff x\diff t,\\
    &I_2^l \rightarrow \int_{Q_T}\varepsilon_4\varphi\frac{g_{\varepsilon_1}(\theta^{})}{\theta^{}}\mathbf{1}_{\{\det\bF^{} > \varepsilon_2\}}|(\bB^{})^{-1}\nabla_x\bB^{}|^2\diff x\diff t,\\
    &I_3^l \rightarrow \int_{Q_T}\varepsilon_4\varphi\frac{g_{\varepsilon_1}(\theta^{}) - \theta^{l}g'_{\varepsilon_1}(\theta^{})}{(\theta^{})^2}\tilde{\psi}'_{\varepsilon_2}(\bB^{}):(\bF^{}(\nabla_x\theta^{}\cdot\nabla_x\bF^{ T}) + (\nabla_x\theta^{}\cdot\nabla_x\bF^{})\bF^{ T})\diff x\diff t,\\
    &I_4^l \rightarrow\int_{Q_T}\varepsilon_4\frac{g_{\varepsilon_1}(\theta^{})}{\theta^{}}\tilde{\psi}'_{\varepsilon_2}(\bB^{}):(\bF^{}(\nabla_x\varphi\cdot\nabla_x\bF^{ T}) + (\nabla_x\varphi\cdot\nabla_x\bF^{})\bF^{T})\diff x\diff t,
\end{align*}
hence
\begin{equation}\label{eq:step5_operator_identified}
\begin{split}
    T(\varphi) &=  2\int_{Q_T}\varepsilon_4\varphi\frac{g_{\varepsilon_1}(\theta^{})}{\theta^{}}\tilde{\psi}'_{\varepsilon_2}(\bB^{}):(\nabla_x\bF^{}\cdot\nabla_x\bF^{ T})\diff x\diff t\\
    &\quad +\int_{Q_T}\varepsilon_4\varphi\frac{g_{\varepsilon_1}(\theta^{})}{\theta^{}}\mathbf{1}_{\{\det\bF^{} > \varepsilon_2\}}|(\bB^{})^{-1}\nabla_x\bB^{}|^2\diff x\diff t\\
    &\quad + \int_{Q_T}\varepsilon_4\varphi\frac{g_{\varepsilon_1}(\theta^{}) - \theta^{l}g'_{\varepsilon_1}(\theta^{})}{(\theta^{})^2}\tilde{\psi}'_{\varepsilon_2}(\bB^{}):(\bF^{}(\nabla_x\theta^{}\cdot\nabla_x\bF^{ T}) + (\nabla_x\theta^{}\cdot\nabla_x\bF^{})\bF^{ T})\diff x\diff t\\
    &\quad + \int_{Q_T}\varepsilon_4\frac{g_{\varepsilon_1}(\theta^{})}{\theta^{}}\tilde{\psi}'_{\varepsilon_2}(\bB^{}):(\bF^{}(\nabla_x\varphi\cdot\nabla_x\bF^{ T}) + (\nabla_x\varphi\cdot\nabla_x\bF^{})\bF^{T})\diff x\diff t.
\end{split}
\end{equation}

\noindent\underline{Step 6: convergence with $\varepsilon_4\to 0^+$.} Similarly as in the previous step
\begin{align*}
    \partial_t \bv^{\varepsilon_4} &\rightharpoonup^* \partial_t \bv, &&\text{ weakly* in }L^{2}(0, T; (W^{1,2}_{\bn,\diver}(\Omega))^*),\\
    \bv^{\varepsilon_4} &\rightharpoonup^* \bv, &&\text{ weakly* in }L^{\infty}(0, T; L^2(\Omega)),\\
    \bv^{\varepsilon_4} &\rightharpoonup \bv, &&\text{ weakly in }L^2(0, T; W^{1,2}_{\bn,\diver}(\Omega)),\\
    \bv^{\varepsilon_4} &\rightarrow \bv, &&\text{ strongly in }L^q(Q_T),\, q< 10/3,\\
    \bv^{\varepsilon_4} &\rightarrow \bv, &&\text{ a.e. in }Q_T.
\end{align*}
Again, one can verify that \eqref{eq:step5_F} can be tested by $\bF^{\varepsilon_4}$, which means that we again obtain the bounds on $\{\bF^{\varepsilon_4}\}_{\varepsilon_4 > 0}$ in $L^\infty(0, T; (L^2(\Omega))^{3\times 3})$, as well as the bound on $\{\sqrt{\varepsilon_4}\nabla_x\bF^{\varepsilon_4}\}_{\varepsilon_4 > 0}$ in $(L^2(Q_T))^{3\times 3\times 3}$. Repeating the argument at the beginning of Step 4, we imply that $\{\bF^{\varepsilon_4}\}_{\varepsilon_4 > 0}$ is bounded in $(L^\infty(Q_T))^{3\times 3}$, see also comments under \eqref{ineq:step2_linfty_bound}. A priori this is not enough to converge strongly with the sequence $\{\bF^{\varepsilon_4}\}_{\varepsilon_4>0}$, and we shall utilize the technique presented in Subsection \ref{subsection:strong_convergence_of_F}. But before that, we need to establish compactness of the temperature sequence $\{\theta^{\varepsilon_4}\}_{\varepsilon_4 > 0}$. To do so, note that by weak lower semicontinuity of an $L^2$ norm of $\nabla_x(\theta^{1-\lambda/2})$ and the established convergence \eqref{conv:step5_nabla_F} we can deduce from \eqref{ineq:step5_nabla_theta_bound}
\begin{align}\label{ineq:step6_nabla_theta_bound}
    \int_0^T\int_{\Omega}\frac{|\nabla_x\theta^{\varepsilon_4}|^2}{(\theta^{\varepsilon_4})^{\lambda}}\diff x\diff t \leq C(\varepsilon_1,\varepsilon_2, \varepsilon_3,\varepsilon_6, K, \lambda)\varepsilon_4\int_{Q_T}|\nabla_x\bF^{\varepsilon_4}|^2\diff x\diff t + C(\lambda,K, \varepsilon_3), \qquad \lambda\in (1, 2),
\end{align}
 and following the argument under \eqref{est1nablathetalambda} we can deduce that $\{\theta^{\varepsilon_4}\}_{\varepsilon_4>0}$ is bounded in $L^q(Q_T)$ for $q < 5/3$ and $\{\nabla_x\theta^{\varepsilon_4}\}_{\varepsilon_4>0}$ is bounded in $L^{q}(Q_T)$ for $q < 5/4$. With this, we wish to follow the argument in Subsection \ref{subsection:almost_everywhere_conv_theta}, which means that we need the equations for the entropy $\eta^{\varepsilon_4}$ and $\tilde{\psi}_{\varepsilon_2}$. At this point of the approximation, the functions $1/\theta$ is no longer a proper test function as the duality between $\partial_te^{\varepsilon_4}$ and $1/\theta^{\varepsilon_4}$ is "not correct". Thus, we will converge in some sense in the equality \eqref{eq:step5_entropy}. Using our established convergences it is easy to see that for the first three terms of the left-hand side of \eqref{eq:step5_entropy}
 $$
 \partial_t\eta^{l, \varepsilon_4} + \diver_x (\eta^{l, \varepsilon_4}\bv^{l, \varepsilon_4}) - \diver_x\left(\kappa\frac{\nabla_x\theta^{l, \varepsilon_4}}{\theta^{l, \varepsilon_4}}\right) \wstar \partial_t\eta^{\varepsilon_4} + \diver_x (\eta^{\varepsilon_4}\bv^{\varepsilon_4}) - \diver_x\left(\kappa\frac{\nabla_x\theta^{\varepsilon_4}}{\theta^{\varepsilon_4}}\right),
 $$
 as $l\to +\infty$ in $(W^{1,5}(Q_T))^*$. For the fourth term we already know that it converges in the sense of operators to some $T^{\varepsilon_4}$ defined below \eqref{ineq:step5_bound_operator_4}, and identified in \eqref{eq:step5_operator_identified}. Moreover, using the convergence \eqref{conv:step5_nabla_F}, this operator satisfies a bound
 \begin{multline}\label{ineq:step6_operator_norm_bound}
     \|T^{\varepsilon_4}\|_{(W^{1,5}(Q_T))^*}\\
     \leq C(\varepsilon_2, \varepsilon_3)\varepsilon_4\int_{Q_T}|\nabla_x\bF^{\varepsilon_4}|^2\diff x\diff t + \sqrt{\varepsilon_4}C(\varepsilon_1,\varepsilon_2, \varepsilon_3,\varepsilon_6, K)\left(\varepsilon_4\int_{Q_T}|\nabla_x\bF^{\varepsilon_4}|^2\diff x\diff t + 1\right)\|\sqrt{\varepsilon_4}\nabla_x\bF^{\varepsilon_4}\|_2,
 \end{multline}
 which we will show soon converges to $0$ as $\varepsilon_4\to 0^+$, making sure we get rid of this term after this step. For the right-hand side of \eqref{eq:step5_entropy} we use the fact that all three terms are bounded in $L^1$, hence we can define a measure $\mu\in\mathcal{M}^+(Q_T)$ by
 \begin{align*}
     \mu &:= \text{weak}^*\lim\left(\frac{\kappa|\nabla_x\theta^{l, \varepsilon_4}|^2}{(\theta^{l, \varepsilon_4})^2}- \frac{\kappa|\nabla_x\theta^{\varepsilon_4}|^2}{(\theta^{\varepsilon_4})^2}\right)\\
     &\qquad+\text{weak}^*\lim\left(\frac{2\nu|\bD\bv^l_{\varepsilon_4}|^2}{\theta^{l, \varepsilon_4}} - \frac{2\nu|\bD\bv^{\varepsilon_4}|^2}{\theta^{\varepsilon_4}}\right)\\
     &\qquad+\text{weak}^*\lim\Bigg(\tau \frac{(\det\bF^{l,\varepsilon_4} - \varepsilon_5)_+}{\det\bF^{l, \varepsilon_4}}\frac{g_{\varepsilon_1}(\theta^{l, \varepsilon_4})\tilde{\psi}_{\varepsilon_2}'(\bB^{l, \varepsilon_4}):((\bB^{l, \varepsilon_4})^2 - \bB^{l, \varepsilon_4})}{\theta^{l, \varepsilon_4}}\\
     &\qquad\qquad\qquad\qquad\qquad\qquad\qquad\qquad\qquad- \tau \frac{(\det\bF^{\varepsilon_4} - \varepsilon_5)_+}{\det\bF^{\varepsilon_4}}\frac{g_{\varepsilon_1}(\theta^{\varepsilon_4})\tilde{\psi}_{\varepsilon_2}'(\bB^{\varepsilon_4}):((\bB^{\varepsilon_4})^2 - \bB^{\varepsilon_4})}{\theta^{\varepsilon_4}}\Bigg),
 \end{align*}
 where this measure is positive due to the weak lower semicontinuity of norms. Thus, after the convergence with $l\to +\infty$ in \eqref{eq:step5_entropy} we deduce
\begin{equation}\label{eq:step6_entropy}
    \begin{split}
        &\partial_t\eta^{\varepsilon_4} + \diver_x (\eta^{\varepsilon_4}\bv^{\varepsilon_4}) - \diver_x\left(\kappa\frac{\nabla_x\theta^{\varepsilon_4}}{\theta^{\varepsilon_4}}\right) - T^{\varepsilon_4}\\
        &= \frac{\kappa|\nabla_x\theta^{\varepsilon_4}|^2}{(\theta^{\varepsilon_4})^2} + \frac{2\nu|\bD\bv^{\varepsilon_4}|^2}{\theta^{\varepsilon_4}} + \tau \frac{(\det\bF^{\varepsilon_4} - \varepsilon_5)_+}{\det\bF^{\varepsilon_4}}\frac{g_{\varepsilon_1}(\theta^{\varepsilon_4})\tilde{\psi}_{\varepsilon_2}'(\bB^{\varepsilon_4}):((\bB^{\varepsilon_4})^2 - \bB^{\varepsilon_4})}{\theta^{\varepsilon_4}} + \mu^{\varepsilon_4}, 
    \end{split}
\end{equation}
in the sense of operators in $(W^{1,5}(Q_T))^*$, for $\eta^{\varepsilon_4} = \ln\theta^{\varepsilon_4} - g'_{\varepsilon_1}(\theta^{\varepsilon_4})\tilde{\psi}_{\varepsilon_2}(\bB^{\varepsilon_4})$. The equation for $\tilde{\psi}_{\varepsilon_2}$ still has proper duality, therefore following the procedure under \eqref{divetakqkcomp} one obtains
\begin{equation}\label{eq:step6_tilde_psi}
    \begin{split}
        &\partial_t\tilde{\psi}_{\varepsilon_2}(\bB^{\varepsilon_4}) + \diver_x(\tilde{\psi}_{\varepsilon_2}(\bB^{\varepsilon_4})\bv_{\varepsilon_4}) - \varepsilon_4\tilde{\psi}'_{\varepsilon_2}(\bB^{\varepsilon_4}) :(\bF^{\varepsilon_4}\Delta_x\bF^{\varepsilon_4, T} + \Delta_x\bF^{\varepsilon_4}\bF^{\varepsilon_4, T})\\
        &+ \tau\frac{(\det\bF^{\varepsilon_4} - \varepsilon_5)_+}{\det\bF^{\varepsilon_4}}\tilde{\psi}_{\varepsilon_2}'(\bB^{\varepsilon_4}):((\bB^{\varepsilon_4})^2 - \bB^{\varepsilon_4}) = 2(\bB^{\varepsilon_4} - \bI):\bD\bv^l_{\varepsilon_4}\frac{(\theta^{\varepsilon_4} - \varepsilon_6)_+}{\theta^{\varepsilon_4}}.
    \end{split}
\end{equation}
in the sense of operators on $C^1([0, T]\times \overline{\Omega})$, for $\tilde{\psi}_{\varepsilon_2}$ defined as in \eqref{eq:modified_psi_varepsilon2}. Now, we can copy verbatum the argument in Subsection \ref{subsection:almost_everywhere_conv_theta} as in the previous step. Since \eqref{eq:step6_entropy} can be tested by $1$, we can deduce that the right-hand side is bounded in total variation norm, hence, we may embed it compactly into $(W^{1,5}(Q_T))^*$. Since $T^{\varepsilon_4}$ is defined by \eqref{eq:step5_operator} it is also defined via uniformly bounded $L^1$ functions, and defines a compact sequence in $(W^{1,5}(Q_T))^*$. The term in \eqref{eq:step6_tilde_psi} can be treated the same way. Thus, we may follow Subsection \ref{subsection:almost_everywhere_conv_theta} to deduce
\begin{align*}
    \theta^{\varepsilon_4}\rightarrow\theta\quad\text{ a.e. in }Q_T,
\end{align*}
which combined with \eqref{ineq:step6_nabla_theta_bound}, and our comments below \eqref{ineq:step6_nabla_theta_bound} gives us
\begin{align*}
    \partial_t \theta^{\varepsilon_4} &\rightharpoonup^* \partial_t \theta, &&\text{ weakly* in }L^q(0, T; (W^{1,q}(\Omega))^*), q > 5\\
    \theta^{\varepsilon_4} &\rightarrow \theta, &&\text{ strongly in }L^q(Q_T),\, q< 5/3,\\
    \nabla_x\theta^{\varepsilon_4} &\rightharpoonup \nabla_x\theta, &&\text{ weakly in }L^q(Q_T),\, q< 5/4,\\
    \theta^{\varepsilon_4} &\rightarrow \theta, &&\text{ a.e. in }Q_T,\\
    (\theta^{\varepsilon_4})^{\frac{2-\lambda}{2}} &\rightharpoonup\theta^{\frac{2-\lambda}{2}}, &&\text{ weakly in }L^2(0, T; W^{1,2}(\Omega)),\, \lambda\in (0, 1).
\end{align*}
Moreover, analysing \eqref{eq:step6_entropy}, one can obtain similarly to \eqref{estlnt} and \eqref{estgradln} the convergence
\begin{align*}
    \ln\theta^{\varepsilon_4}\rightharpoonup\ln\theta,\text{ weakly in }L^2(0, T; W^{1,2}(\Omega)).
\end{align*}
Having those convergences we can come back to the problem of strong convergence of $\{\bF^{\varepsilon_4}\}_{\varepsilon_4 > 0}$. At this point we can deduce that
\begin{align*}
    \partial_t \bF^{\varepsilon_4} &\rightharpoonup^* \partial_t \bF, &&\text{ weakly* in }L^2(0, T; ((W^{1,2}(\Omega))^{3\times 3})^*),\\
    \bF^{\varepsilon_4} &\rightharpoonup^* \bF, &&\text{ weakly* in }(L^\infty(Q_T))^{3\times 3},\\
    \varepsilon_4\nabla_x\bF^{\varepsilon_4} &\rightharpoonup 0, &&\text{ weakly in }(L^2(Q_T))^{3\times 3}.
\end{align*}
Thus, we can converge in \eqref{eq:step4_F} to obtain
\begin{equation}\label{eq:step6_almost_F}
    \begin{split}
        \partial_t \bF + \Diver_x(\bF\otimes\bv)- \overline{\Lambda_{\varepsilon_3}(|\bF|)\nabla_x \bv \,\bF}\,\frac{(\theta - \varepsilon_6)_+}{\theta}+ \frac{\tau(\theta)}{2}\left(\overline{\frac{(\det\bF - \varepsilon_5)_+}{\det\bF}\bF\,\bF^{T}\, \bF} - \overline{\frac{(\det\bF - \varepsilon_5)_+}{\det\bF}\bF}\right) = 0,
    \end{split}
\end{equation}
where, in accordance to the convention \eqref{conven} the function $\overline{\Lambda_{\varepsilon_3}(|\bF|)\nabla_x \bv \,\bF}$ is a weak limit of a function $\Lambda_{\varepsilon_3}(|\bF^{\varepsilon_4}|)\nabla_x \bv^{\varepsilon_4} \,\bF^{\varepsilon_4}$ in $L^2(Q_T)$, and $\overline{\frac{(\det\bF - \varepsilon_5)_+}{\det\bF}\bF\,\bF^{T}\, \bF}$ is a weak* limit of the function $\frac{(\det\bF^{\varepsilon_4} - \varepsilon_5)_+}{\det\bF^{\varepsilon_4}}\bF^{\varepsilon_4}\,\bF^{\varepsilon_4, T}\, \bF^{\varepsilon_4}$ in $(L^\infty(Q_T))^{3\times 3}$; analogously we define $\overline{\frac{(\det\bF - \varepsilon_5)_+}{\det\bF}\bF}$. As in Subsection \ref{subsection:strong_convergence_of_F}, one wants to compare \eqref{eq:step5_F} tested by $\varphi\bF^{\varepsilon_4}$ and \eqref{eq:step6_almost_F} tested by $\varphi\bF$, for any non-negative  $\varphi\in \mathcal{C}_c^\infty((-\infty,T)\times\Omega)$ (note that at this point we do not need the function $G_m$ defined in \eqref{defGm}, because the velocity has high enough regularity). Moreover, testing \eqref{eq:step5_F} by $\varphi\bF^{\varepsilon_4}$ gives us two additional terms
\begin{align*}
    \varepsilon_4\int_{Q_T}|\nabla_x\bF^{\varepsilon_4}|^2\varphi\diff x\diff t,\qquad \varepsilon_4\int_{Q_T}(\nabla_x\bF^{\varepsilon_4}\cdot\nabla_x\varphi)\bF^{\varepsilon_4}\diff x\diff t,
\end{align*}
first of which is non-negative, and second of which disappears as $\varepsilon_4\to 0^+$, hence, they are invisible when it comes to inequalities. With this in mind, one can easily verify that in this case we obtain the following analogue of \eqref{step1compFsigma} valid for all non-negative  $\varphi\in \mathcal{C}_c^\infty((-\infty,T)\times\Omega)$:
\begin{equation*}
    \begin{split}
    -&\int_{Q_T} \left(\overline{|\bF|^2}-|\bF|^2\right)\partial_t \varphi - \int_{Q_T} \left(\overline{|\bF|^2}-|\bF|^2\right) \bv\cdot \nabla \varphi \\
    &\qquad \leq \int_{Q_T} \left(\overline{|\bF|^2}-|\bF|^2\right)\varphi-\ell_{1}+2\ell_{2},
\end{split}
\end{equation*}
where
\begin{align*}
 \ell_{1}&:= \lim_{\varepsilon_4\to 0^+} \int_{Q_T} \tau(\theta)\frac{(\det\bF^{\varepsilon_4} - \varepsilon_5)_+}{\det\bF^{\varepsilon_4}}\left(|\bF^{\varepsilon_4} \bF^{\varepsilon_4, T}|^2 -\bF^{\varepsilon_4}\bF^{\varepsilon_4, T}\bF^{\varepsilon_4}:\bF\right)\varphi,
 \\
    \ell_{2}&:= \lim_{\varepsilon_4\to 0^+} \int_{Q_T} \left(\Lambda_{\varepsilon_3}(|\bF^{\varepsilon_4}|)\nabla_x \bv^{\varepsilon_4} \,\bF^{\varepsilon_4}\frac{(\theta^{\varepsilon_4} - \varepsilon_6)_+}{\theta^{\varepsilon_4}} : \bF^{\varepsilon_4}- \Lambda_{\varepsilon_3}(|\bF^{\varepsilon_4}|)\nabla_x\bv^{\varepsilon_4}\bF^{\varepsilon_4}\frac{(\theta^{\varepsilon_4} - \varepsilon_6)_+}{\theta^{\varepsilon_4}}:\bF\right)\varphi.
\end{align*}
We will verify, that $\ell_1$ is a sum of two integrals, one of which is non-negative, and the second one which satisfies an inequality similar to the one for $\ell_2$ below, while $\ell_2$ satisfies
\begin{equation*}
    \ell_{2}\leq \int_{Q_T} \tilde{L} \left(\overline{|\bF|^2}-|\bF|^2\right)\varphi
\end{equation*}
for all non-negative  $\varphi\in \mathcal{C}_c^\infty((-\infty,T)\times\Omega)$,
and some $\tilde{L}$ -- an $L^2(Q_T)$ function specified below, which will allow us to follow Subsection \ref{subsection:strong_convergence_of_F} and conclude that $\bF^{\varepsilon_4}$ converges strongly in $(L^2(Q_T))^{3\times 3}$ to $\bF$. For $\ell_1$ we may write
\begin{equation*}
    \begin{split}
        &\int_{Q_T} \tau(\theta)\frac{(\det\bF^{\varepsilon_4} - \varepsilon_5)_+}{\det\bF^{\varepsilon_4}}\left(|\bF^{\varepsilon_4} \bF^{\varepsilon_4, T}|^2 -\bF^{\varepsilon_4}\bF^{\varepsilon_4, T}\bF^{\varepsilon_4}:\bF\right)\varphi\diff x\diff t\\
        &= \int_{Q_T}\tau(\theta)(\bF^{\varepsilon_4}\bF^{\varepsilon_4, T}\bF^{\varepsilon_4} - \bF\,\bF^{T}\,\bF):(\bF^{\varepsilon_4} - \bF)\frac{(\det\bF^{\varepsilon_4} - \varepsilon_5)_+}{\det\bF^{\varepsilon_4}}\varphi\diff x\diff t\\
        &\qquad +\int_{Q_T}\tau(\theta)\bF\,\bF^T\,\bF:(\bF^{\varepsilon_4} - \bF)\left(\frac{(\det\bF^{\varepsilon_4} - \varepsilon_5)_+}{\det\bF^{\varepsilon_4}} - \frac{(\det\bF - \varepsilon_5)_+}{\det\bF}\right)\varphi\diff x\diff t\\
        &\qquad +\int_{Q_T}\tau(\theta)\bF\,\bF^T\,\bF:(\bF^{\varepsilon_4} - \bF) \frac{(\det\bF - \varepsilon_5)_+}{\det\bF}\varphi\diff x\diff t\\
        &=: I_1^{\varepsilon_4} + I_2^{\varepsilon_4} + I_3^{\varepsilon_4}.
    \end{split}
\end{equation*}
The fact that $I_1^{\varepsilon_4} \geq 0$ follows from the monotonicity of a function $\bF\mapsto \bF\bF^T\bF$ (see for example \cite[Lemma 4.2]{BuMaLo22}). Let us move into handling $I_2^{\varepsilon_4}$. Notice that since $\{\bF\}_{\varepsilon_4}$ is bounded in $(L^{\infty}(Q_T))^{3\times 3}$, there exists a $C> 0$ for which $\bF^{\varepsilon_4}$ is contained in the set $\{F\in \R^{3\times 3}: |F|\leq C\}$. Now, we may calculate
$$
\partial\frac{(\det F - \varepsilon_5)_+}{\det F} = \mathbf{1}_{\{\det F > \varepsilon_5\}}\frac{\varepsilon_5}{\det F}F^{-T}:\partial F,
$$
and on the set $\{F\in \R^{3\times 3}: |F|\leq C\}$
$$
 \left|\mathbf{1}_{\{\det F > \varepsilon_5\}}\frac{\varepsilon_5}{\det F}F^{-T}\right|\leq 3\frac{|F|^2}{\det F}\leq \frac{3C^2}{\varepsilon_5}.
$$
Hence,
$$
F\mapsto \frac{(\det F - \varepsilon_5)_+}{\det F}
$$
is Lipschitz on this set and one can bound
\begin{equation*}
    \begin{split}
        |I_2^{\varepsilon_4}|\leq \frac{3K}{\varepsilon_5}C^5\int_{Q_T}|\bF^{\varepsilon_4} - \bF|^2\varphi\diff x\diff t \rightarrow \frac{3K}{\varepsilon_5}C^5\int_{Q_T}(\overline{|\bF|^2} - |\bF|^2)\varphi\diff x\diff t,
    \end{split}
\end{equation*}
as $\varepsilon_4\to 0^+$. For the term $I_3^{\varepsilon_4}$ we can deduce from the weak* convergence of $\bF^{\varepsilon_4}$ to $\bF$ in $(L^\infty(Q_T))^{3\times 3}$ that 
$$
I_3^{\varepsilon_4}\rightarrow 0^+, \quad\varepsilon_4\to 0^+.
$$
To handle $\ell_2$ let us first recall, that analogously to the arguments below \eqref{stokes1} and \eqref{stokes2} we can split the velocity into two parts $\bv^{\varepsilon_4} = \bv_1^{\varepsilon_4} + \bv^{\varepsilon_4}_2$, where
\begin{align*}
    \bv_1^{\varepsilon_4} & \rightharpoonup \bv_1 && \mbox{weakly in } L^2(0,T; W_{\bn, \diver}^{1,2}),\\
    \partial_t \bv_1^{\varepsilon_4} & \rightharpoonup \partial_t\bv_1 && \mbox{weakly in } L^2(0,T; (W_{\bn, \diver}^{1,2})^*),\\
    \bv_1^{\varepsilon_4} &\to \bv_1 \quad &&\mbox{strongly in } (L^{\frac{10}{3})}(Q_T))^3,
\end{align*}
and
\begin{align*}
    \bv_2^{\varepsilon_4} & \rightharpoonup \bv_2 && \mbox{weakly in } L^2(0,T; W_{\bn, \diver}^{1,2}),\\
    \nabla \bv_2^{\varepsilon_4} & \to \nabla\bv_2 && \mbox{strongly in } (L^{2)}(Q_T))^{3\times 3}.
\end{align*}
Moreover, by Proposition \ref{bitingT}
\begin{align*}
    &\left(2\Lambda_{\varepsilon_3}(|\bF^{\varepsilon_4}|)g_{\varepsilon_1}(\theta^{\varepsilon_4})\bF^{\varepsilon_4}\,\bF^{\varepsilon_4,T}\frac{(\theta^{\varepsilon_4} - \varepsilon_6)_+}{\theta^{\varepsilon_4}} + \bD\bv^{\varepsilon_4}_1\right):\bD\bv^{\varepsilon_4}_1\\
    &\qquad\qquad\qquad\rightharpoonup \left(2\overline{\Lambda_{\varepsilon_3}(|\bF|)g_{\varepsilon_1}(\theta^{})\bF^{}\,\bF^{T}\frac{(\theta - \varepsilon_6)_+}{\theta}}+\bD\bv_1\right):\bD\bv_1\text{, in a biting sense in }L^1(Q_T),
\end{align*}
see also Proposition \ref{biting} for the definition of the biting convergence. Let us denote by $E_j$ the sequence of sets for which $|Q_T\setminus E_j|\rightarrow 0$ as $j\to +\infty$, and
\begin{align}
    &\left(2\Lambda_{\varepsilon_3}(|\bF^{\varepsilon_4}|)g_{\varepsilon_1}(\theta^{\varepsilon_4})\bF^{\varepsilon_4}\,\bF^{\varepsilon_4,T}\frac{(\theta^{\varepsilon_4} - \varepsilon_6)_+}{\theta^{\varepsilon_4}} + \bD\bv^{\varepsilon_4}_1\right):\bD\bv^{\varepsilon_4}_1\nonumber\\
    &\qquad\qquad\qquad\rightharpoonup \left(2\overline{\Lambda_{\varepsilon_3}(|\bF|)g_{\varepsilon_1}(\theta)\bF\,\bF^{T}\frac{(\theta - \varepsilon_6)_+}{\theta}}+\bD\bv_1\right):\bD\bv_1\text{, weakly in }L^1(E_j).\label{conv:step6_weak_product_1}
\end{align}
Furthermore, following \eqref{1ltunif} let $\mathcal{E}_k$ be the sequence of sets for which $|Q_T\setminus\mathcal{E}_k|\rightarrow 0$ and
$$
\frac{1}{g_{\varepsilon_1}(\theta^{\varepsilon_4})} \rightarrow \frac{1}{g_{\varepsilon_1}(\theta)},\text{ strongly in }L^\infty(\mathcal{E}_k)
$$
Note that, due to \eqref{conv:step6_weak_product_1}
\begin{align}
    &\lim_{\varepsilon_4\to 0^+}\int_{E_j\cap\mathcal{E}_k}\left(2\Lambda_{\varepsilon_3}(|\bF^{\varepsilon_4}|)g_{\varepsilon_1}(\theta^{\varepsilon_4})\bF^{\varepsilon_4}\,\bF^{\varepsilon_4,T}\frac{(\theta^{\varepsilon_4} - \varepsilon_6)_+}{\theta^{\varepsilon_4}}:\bD\bv^{\varepsilon_4} - 2\overline{\Lambda_{\varepsilon_3}(|\bF|)g_{\varepsilon_1}(\theta)\bF\,\bF^{T}\frac{(\theta - \varepsilon_6)_+}{\theta}}:\bD\bv\right)\varphi\diff x\diff t\nonumber\\
    & = - \lim_{\varepsilon_4\to 0^+}\int_{E_j\cap\mathcal{E}_k}|\bD\bv^{\varepsilon_4}_1 - \bD\bv|^2\varphi\diff x\diff t.\label{conv:step6_weak_product_2}
\end{align}
Having this, we may write
\begin{equation*}
    \begin{split}
        &\int_{E_j\cap\mathcal{E}_k} \left(\Lambda_{\varepsilon_3}(|\bF^{\varepsilon_4}|)\nabla_x \bv^{\varepsilon_4} \bF^{\varepsilon_4}\frac{(\theta^{\varepsilon_4} - \varepsilon_6)_+}{\theta^{\varepsilon_4}} : \bF^{\varepsilon_4}- \Lambda_{\varepsilon_3}(|\bF^{\varepsilon_4}|)\nabla_x\bv^{\varepsilon_4}\bF^{\varepsilon_4}\frac{(\theta^{\varepsilon_4} - \varepsilon_6)_+}{\theta^{\varepsilon_4}}:\bF\right)\varphi\diff x\diff t\\
        & = \int_{E_j\cap\mathcal{E}_k} \left(\Lambda_{\varepsilon_3}(|\bF^{\varepsilon_4}|)\nabla_x \bv^{\varepsilon_4} \bF^{\varepsilon_4}\frac{(\theta^{\varepsilon_4} - \varepsilon_6)_+}{\theta^{\varepsilon_4}} : \bF^{\varepsilon_4}- \Lambda_{\varepsilon_3}(|\bF^{\varepsilon_4}|)\nabla_x\bv\,\bF^{\varepsilon_4}\frac{(\theta^{\varepsilon_4} - \varepsilon_6)_+}{\theta^{\varepsilon_4}}:\bF^{\varepsilon_4}\right)\varphi\diff x\diff t\\
        &\qquad + \int_{E_j\cap\mathcal{E}_k}\frac{(\theta^{\varepsilon_4} - \varepsilon_6)_+}{\theta^{\varepsilon_4}}(\Lambda_{\varepsilon_3}(|\bF^{\varepsilon_4}|)\bF^{\varepsilon_4} - \Lambda_{\varepsilon_3}(|\bF|)\bF)\nabla_x\bv:(\bF^{\varepsilon_4} - \bF)\varphi\diff x\diff t\\
        &\qquad -\int_{E_j\cap\mathcal{E}_k}\frac{(\theta^{\varepsilon_4} - \varepsilon_6)_+}{\theta^{\varepsilon_4}}(\Lambda_{\varepsilon_3}(|\bF^{\varepsilon_4}|)\bF^{\varepsilon_4} - \Lambda_{\varepsilon_3}(|\bF|)\bF)(\nabla_x\bv^{\varepsilon_4} - \nabla_x\bv):\bF\varphi\diff x\diff t\\
        &\qquad +\int_{E_j\cap\mathcal{E}_k}\frac{(\theta^{\varepsilon_4} - \varepsilon_6)_+}{\theta^{\varepsilon_4}}\left(\nabla_x\bv\Lambda_{\varepsilon_3}(|\bF|)\bF:\bF^{\varepsilon_4} - \nabla_x\bv^{\varepsilon_4}\Lambda_{\varepsilon_3}(|\bF|)\bF:\bF\right)\diff x\diff t\\
        &=: J_1^{\varepsilon_4} + J_2^{\varepsilon_4} - J_3^{\varepsilon_4} + J_4^{\varepsilon_4}.
    \end{split}
\end{equation*}
Let us treat all the terms separately. For $J_1^{\varepsilon_4}$ we can write by \eqref{conv:step6_weak_product_2} and the strong convergence of the gradient of $\bv_2^{\varepsilon_4}$
\begin{equation*}
    \begin{split}
        &\lim_{\varepsilon_4\to 0^+}\int_{E_j\cap\mathcal{E}_k} \left(\Lambda_{\varepsilon_3}(|\bF^{\varepsilon_4}|)\nabla_x \bv^{\varepsilon_4} \bF^{\varepsilon_4}\frac{(\theta^{\varepsilon_4} - \varepsilon_6)_+}{\theta^{\varepsilon_4}} : \bF^{\varepsilon_4}- \Lambda_{\varepsilon_3}(|\bF^{\varepsilon_4}|)\nabla_x\bv\,\bF^{\varepsilon_4}\frac{(\theta^{\varepsilon_4} - \varepsilon_6)_+}{\theta^{\varepsilon_4}}:\bF^{\varepsilon_4}\right)\varphi\diff x\diff t\\
        &\leq -\lim_{\varepsilon_4\to 0^+}\int_{E_j\cap\mathcal{E}_k}\frac{|\bD\bv_1^{\varepsilon_4} - \bD\bv_1|^2}{g_{\varepsilon_1}(\theta)}\varphi\diff x\diff t + \frac{4}{\varepsilon_3^2}\lim_{\varepsilon_4\to 0^+}\int_{E_j\cap\mathcal{E}_k}|\nabla_x\bv_2^{\varepsilon_4} - \nabla_x\bv_2|\varphi\diff x\diff t\\
        &\leq -\lim_{\varepsilon_4\to 0^+}\int_{E_j\cap\mathcal{E}_k}\frac{|\bD\bv_1^{\varepsilon_4} - \bD\bv_1|^2}{R}\varphi\diff x\diff t, 
    \end{split}
\end{equation*}
where $R := \sup_{\theta > 0}g(\theta) > 0$. When it comes to $J_4^{\varepsilon_4}$ we can see that due to the almost everywhere convergence of $\theta^{\varepsilon_4}$ and the Vitali's convergence theorem
\begin{align*}
    \frac{(\theta^{\varepsilon_4} - \varepsilon_6)_+}{\theta^{\varepsilon_4}}\to \frac{(\theta - \varepsilon_6)_+}{\theta},\text{ strongly in }L^q(Q_T),\,\, q< +\infty,
\end{align*}
hence using weak convergence of $\bF^{\varepsilon_4}$ and $\nabla_x\bv^{\varepsilon_4}$
$$
\lim_{\varepsilon_4\to 0^+}J_4^{\varepsilon_4} = 0.
$$
Before moving to $J_2^{\varepsilon_4}$ and $J_3^{\varepsilon_4}$ notice that $F\to \Lambda_{\varepsilon_3}(|F|)$ can be chosen in such a way that $0\leq|\partial\Lambda_{\varepsilon_3}(|F|)|\leq 2\varepsilon_3$. Hence, the function
$$
F\mapsto \Lambda_{\varepsilon_3}(|F|)F
$$
is Lipschitz with (without a loss of generality) a Lipschitz constant smaller than $1$, and we may write
\begin{equation*}
    \begin{split}
        &\lim_{\varepsilon_4\to 0^+}J_2^{\varepsilon_4} = \lim_{\varepsilon_4\to 0^+}\int_{E_j\cap\mathcal{E}_k}\frac{(\theta^{\varepsilon_4} - \varepsilon_6)_+}{\theta^{\varepsilon_4}}(\Lambda_{\varepsilon_3}(|\bF^{\varepsilon_4}|)\bF^{\varepsilon_4} - \Lambda_{\varepsilon_3}(|\bF|)\bF)\nabla_x\bv:(\bF^{\varepsilon_4} - \bF)\varphi\diff x\diff t\\
        &\leq \int_{Q_T}|\nabla_x\bv|(\overline{|\bF|^2} - |\bF|^2)\varphi\diff x\diff t,
    \end{split}
\end{equation*}
Moving further, for $J_3^{\varepsilon_4}$ using Young's inequality, localized version of the Korn's inequality for sequences (see for example \cite{BuMaLo24} in the Appendix), and strong convergence of $\nabla_x\bv^{\varepsilon_4}_2$ in $L^1(Q_T)$, we deduce
\begin{equation*}
    \begin{split}
        &\lim_{\varepsilon_4\to 0^+}J_3^{\varepsilon_4} = \lim_{\varepsilon_4\to 0^+}\int_{E_j\cap\mathcal{E}_k}\frac{(\theta^{\varepsilon_4} - \varepsilon_6)_+}{\theta^{\varepsilon_4}}(\Lambda_{\varepsilon_3}(|\bF^{\varepsilon_4}|)\bF^{\varepsilon_4} - \Lambda_{\varepsilon_3}(|\bF|)\bF)(\nabla_x\bv^{\varepsilon_4} - \nabla_x\bv):\bF\varphi\diff x\diff t\\
        &\qquad\leq \lim_{\varepsilon_4\to 0^+}\int_{E_j\cap\mathcal{E}_k}|\bF||\bF^{\varepsilon_4} - \bF||\nabla_x\bv^{\varepsilon_4}_1 - \nabla_x\bv_1|\varphi\diff x\diff t\\
        &\qquad \leq C(R)\int_{E_j\cap\mathcal{E}_k}|\bF|^2(\overline{|\bF|^2} - |\bF|^2)\varphi\diff x\diff t + \lim_{\varepsilon_4\to 0^+}\int_{E_j\cap\mathcal{E}_k}\frac{|\bD\bv^{\varepsilon_4}_1 - \bD\bv_1|^2}{2R}\varphi\diff x\diff t
    \end{split}
\end{equation*}
Adding everything together we obtain
$$
\lim_{\varepsilon_4\to 0^+}J_1^{\varepsilon_4} + J_2^{\varepsilon_4} + J_3^{\varepsilon_4} + J_4^{\varepsilon_4} \leq \int_{Q_T}(|\nabla_x\bv| + C(R)|\bF|^2)(\overline{|\bF|^2} - |\bF|^2)\varphi\diff x\diff t.
$$
All that is left to do is consider the integral over the set $|Q_T\setminus(E_j\cap\mathcal{E}_k)|$. Here, we easily see by H\"{o}lder's inequality
\begin{equation*}
    \begin{split}
        &\lim_{\varepsilon_4\to 0^+}\left|\int_{Q_T\setminus(E_j\cap\mathcal{E}_k)} \left(\Lambda_{\varepsilon_3}(|\bF^{\varepsilon_4}|)\nabla_x \bv^{\varepsilon_4} \bF^{\varepsilon_4}\frac{(\theta^{\varepsilon_4} - \varepsilon_6)_+}{\theta^{\varepsilon_4}} : \bF^{\varepsilon_4}- \Lambda_{\varepsilon_3}(|\bF^{\varepsilon_4}|)\nabla_x\bv^{\varepsilon_4}\bF^{\varepsilon_4}\frac{(\theta^{\varepsilon_4} - \varepsilon_6)_+}{\theta^{\varepsilon_4}}:\bF\right)\varphi\diff x\diff t \right|\\
        &\qquad\leq \lim_{\varepsilon_4\to 0^+}\frac{8}{\varepsilon^2_3}\|\varphi\|_\infty\int_{Q_T\setminus(E_j\cap\mathcal{E}_k)} |\nabla_x\bv^{\varepsilon_4}|\diff x\diff t \\
        &\qquad \leq C(K, \varepsilon_3)\|\varphi\|_\infty\sqrt{|Q_T\setminus(E_j\cap\mathcal{E}_k)|}.
    \end{split}
\end{equation*}
Since $|Q_T\setminus(E_j\cap\mathcal{E}_k)|\to 0$ as $j, k\to +\infty$, we finally obtain the needed
$$
\ell_2 \leq \int_{Q_T}(|\nabla_x\bv| + C(R)|\bF|^2)(\overline{|\bF|^2} - |\bF|^2)\varphi\diff x\diff t.
$$
Thus, by Subsection \ref{subsection:strong_convergence_of_F} 
$$
\bF^{\varepsilon_4}\rightarrow\bF,\text{ strongly in }L^2(Q_T),
$$
which by interpolation implies
$$
\bF^{\varepsilon_4}\rightarrow\bF,\text{ strongly in }L^q(Q_T),\, q< +\infty.
$$
Now, from \eqref{eq:step5_e_with_theta} we can see that $e^{\varepsilon_4}$ inherits its regularity from $\theta^{\varepsilon_4}$ and $\bF^{\varepsilon_4}$. One obtains
\begin{align*}
    \partial_t e^{\varepsilon_4} &\rightharpoonup^* \partial_t e, &&\text{ weakly* in }L^q(0, T; (W^{1,q}(\Omega))^*), q > 5\\
    e^{\varepsilon_4} &\rightarrow e, &&\text{ strongly in }L^q(Q_T),\, q< 5/3,\\
    e^{\varepsilon_4} &\rightarrow e, &&\text{ a.e. in }Q_T.
\end{align*}
To keep the equality in the internal energy equation one needs to now show that $\nabla_x\bv^{\varepsilon_4}$ converges strongly in $L^2(Q_T)$. As we have already shown strong convergence of $\bF^{\varepsilon_4}$ in $L^2(Q_T)$ one can repeat the argument given in the previous step (see the discussion under \eqref{eq:energy_limit_step4}). All of the convergences are enough to go from \eqref{eq:step5_velocity}--\eqref{eq:step5_internal_energy} to
\begin{align}
    &\partial_t \bv + \diver_x (\Lambda_{\varepsilon_3}(|\bv|^2)\bv\otimes \bv) - \diver_x \bT_{\varepsilon_1, \varepsilon_3, \varepsilon_6}(\theta, \bF,  \bD\bv) = 0, \label{eq:step6_velocity}\\
    &\partial_t \bF + \Diver_x(\bF\otimes\bv)- \Lambda_{\varepsilon_3}(|\bF|)\nabla_x \bv\, \bF\frac{(\theta - \varepsilon_6)_+}{\theta}+ \frac{\tau(\theta)}{2}\frac{(\det\bF - \varepsilon_5)_+}{\det\bF}(\bF\,\bF^{T}\, \bF - \bF)= 0,\label{eq:step6_F}\\
    &\partial_te + \diver_x(e \bv) - \diver_x(\kappa(\theta)\nabla_x \theta)- \bT_{\varepsilon_1, \varepsilon_3, \varepsilon_6}(\theta, \bF, \bD\bv) : \bD\bv = 0,\label{eq:step6_internal_energy}
\end{align}
where \eqref{eq:step6_velocity}--\eqref{eq:step6_internal_energy} are to be understood in the weak sense in the sense of an operator on a predual space to the time derivative. Note that from \eqref{eq:step5_e_with_theta} we keep
\begin{equation}\label{eq:step6_e_with_theta}
\begin{split}
    e(t, x) = \theta(t, x) + (g_{\varepsilon_1}(\theta(t, x)) - \theta(t, x) g'_{\varepsilon_1}(\theta(t, x)))\tilde{\psi}_{\varepsilon_2}(\bF\bF^{T}),
\end{split}
\end{equation}
for $\tilde{\psi}_{\varepsilon_2}$ defined as in \eqref{eq:modified_psi_varepsilon2}.

Here, let us get rid of the operator $T^{\varepsilon_4}$ from the entropy equality \eqref{eq:step6_entropy}. We will show that
\begin{align}\label{conv:step6_nabla_F}
\lim_{\varepsilon_4\to 0^+}\left(\varepsilon_4\int_{Q_T}|\nabla_x\bF^{\varepsilon_4}|^2\diff x\diff t\right) = 0,
\end{align}
which by \eqref{ineq:step6_operator_norm_bound} will imply $T^{\varepsilon_4} \to 0$ strongly in $(W^{1,5}(Q_T))^*$.  Consider the equation \eqref{eq:step5_F} tested by $\bF^{\varepsilon_4}$, which gives as energy equality
\begin{equation}\label{eq:step6_energy_F_eps}
\begin{split}
    &\varepsilon_4\int_0^t\int_{\Omega}|\nabla_x\bF^{\varepsilon_4}|^2\diff x\diff\tau = -\int_{\Omega}|\bF^{\varepsilon_4}(t, x)|^2\diff x + \int_{\Omega}|\bF^{\varepsilon_4}_0(x)|^2\diff x\\
    &\qquad\qquad+ \int_0^t\int_{\Omega}\Lambda_{\varepsilon_3}(|\bF^{\varepsilon_4}|)\nabla_x \bv^{\varepsilon_4} :\bF^{\varepsilon_4}\bF^{\varepsilon_4,T}\frac{(\theta^{\varepsilon_4} - \varepsilon_6)_+}{\theta^{\varepsilon_4}} - \frac{\tau(\theta^{\varepsilon_4})}{2}\frac{(\det\bF^{\varepsilon_4} - \varepsilon_5)_+}{\det\bF^{\varepsilon_4}}(|\bF^{\varepsilon_4}\bF^{\varepsilon_4, T}|^2 - |\bF^{\varepsilon_4}|^2)\diff x\diff \tau,
\end{split}
\end{equation}
as well as the equation \eqref{eq:step6_F} tested by $\bF$ which provides
\begin{equation}\label{eq:step6_energy_F}
\begin{split}
    &0 = -\int_{\Omega}|\bF(t, x)|^2\diff x + \int_{\Omega}|\bF_0(x)|^2\diff x\\
    &\qquad\qquad+ \int_0^t\int_{\Omega}\Lambda_{\varepsilon_3}(|\bF|)\nabla_x \bv^l :\bF\bF^{T}\frac{(\theta - \varepsilon_6)_+}{\theta} - \frac{\tau(\theta)}{2}\frac{(\det\bF - \varepsilon_5)_+}{\det\bF}(|\bF\bF^{ T}|^2 - |\bF|^2)\diff x\diff \tau.
\end{split}
\end{equation}
Note, that the testing can be easily justified for the term $\Diver(F\otimes v)$, even though the gradient of $\bF$ no longer exists, via the Commutator lemma (cf. \cite[Lemma 4.1]{BuMaLo22} or \cite[Lemma II.1]{DiPerna1989}), since $\bF\in (L^\infty(Q_T))^{3\times 3}$, and $\bv\in L^2(0, T; W^{1,2}(\Omega))$. Now using the already established convergences for $\bF^{\varepsilon_4}$, $\nabla_x\bv^{\varepsilon_4}$ and $\theta^{\varepsilon_4}$ we can deduce
\begin{equation*}
    \begin{split}
        \int_0^t\int_{\Omega}\Lambda_{\varepsilon_3}(|\bF^{\varepsilon_4}|)\nabla_x \bv^{\varepsilon_4} &:\bF^{\varepsilon_4}\bF^{\varepsilon_4,T}\frac{(\theta^{\varepsilon_4} - \varepsilon_6)_+}{\theta^{\varepsilon_4}} - \frac{\tau(\theta^{\varepsilon_4})}{2}\frac{(\det\bF^{\varepsilon_4} - \varepsilon_5)_+}{\det\bF^{\varepsilon_4}}(|\bF^{\varepsilon_4}\bF^{\varepsilon_4, T}|^2 - |\bF^{\varepsilon_4}|^2)\diff x\diff \tau\\
        &\rightarrow \int_0^t\int_{\Omega}\Lambda_{\varepsilon_3}(|\bF|)\nabla_x \bv :\bF\bF^{T}\frac{(\theta - \varepsilon_6)_+}{\theta} - \frac{\tau(\theta)}{2}\frac{(\det\bF - \varepsilon_5)_+}{\det\bF}(|\bF\bF^{ T}|^2 - |\bF|^2)\diff x\diff \tau.
    \end{split}
\end{equation*}
Thus, applying as well weak lower semicontinuity of norms we obtain
\begin{equation*}
\begin{split}
    &\limsup_{\varepsilon_4\to 0^+}\left(\varepsilon_4\int_0^t\int_{\Omega}|\nabla_x\bF^{\varepsilon_4}|^2\diff x\diff\tau \right) \leq  -\int_{\Omega}|\bF(t, x)|^2\diff x + \int_{\Omega}|\bF_0(x)|^2\diff x\\
    &\qquad\qquad+ \int_0^t\int_{\Omega}\Lambda_{\varepsilon_3}(|\bF|)\nabla_x \bv :\bF\bF^{T}\frac{(\theta- \varepsilon_6)_+}{\theta} - \frac{\tau(\theta)}{2}\frac{(\det\bF - \varepsilon_5)_+}{\det\bF}(|\bF\bF^{ T}|^2 - |\bF|^2)\diff x\diff \tau\\
    & = 0,
\end{split}
\end{equation*}
which is what we wanted.

For further reference, we mention that one can converge with $l\to +\infty$ and $\varepsilon_4\to 0^+$ in lambda entropy equality \eqref{eq:step5_lambda_entr_equality} analogously as for the standard entropy \eqref{eq:step5_entropy}, and obtain
\begin{equation}\label{eq:step6_lambda_entr_equality}
    \begin{split}
        &\partial_t\eta^{}_\lambda -\tau\frac{(\det\bF^{} - \varepsilon_5)_+}{\det\bF^{}}((\bB^{})^2 - \bB^{}):\tilde{\psi}'_{\varepsilon_2}(\bB^{})(h_\lambda(\theta^{}) - g'_{\varepsilon_1}(\theta^{})(\theta^{})^{\lambda})\\
        &\qquad+ 2(\bB^{} - \bI):\bD\bv\Lambda_{\varepsilon_3}(|\bF^{}|)\frac{(\theta^{} - \varepsilon_6)_+}{\theta^{}}(h_\lambda(\theta^{}) - g'_{\varepsilon_1}(\theta^{})(\theta^{})^{\lambda})\\
        &=(1-\lambda)\kappa\frac{|\nabla_x\theta^{l}|^2}{(\theta^{})^{2 - \lambda}}\diff x + 2\nu\frac{|\bD\bv|^2}{(\theta^{})^{2 - \lambda}} + \tau\frac{(\det\bF^{} - \varepsilon_5)_+}{\det\bF^{}}\frac{g_{\varepsilon_1}(\theta^{})((\bB^{})^2 - \bB^{}):\tilde{\psi}'_{\varepsilon_2}(\bB^{})}{(\theta^{})^{1 - \lambda}} + \nu_{\lambda},
    \end{split}
\end{equation}
as an operator in $(W^{1,5}(Q_T))^*$, for $\nu_{\lambda}\in \mathcal{M}^+(Q_T)$.\\

\noindent\underline{Step 7: $\det\bF\geq \varepsilon_5$, and $\det\bF > 0$ uniformly.}
Let
$$
\bG_\delta := \bF^{-T}\frac{(\det\bF - \delta)_+}{\det\bF}.
$$
Note that
\begin{align*}
    \|\bG_\delta\|_\infty\leq \frac{C\|\bF\|_\infty^2}{\det\bF}\frac{(\det\bF - \delta)_+}{\det\bF} \leq \frac{C\|\bF\|_\infty^2}{\delta},
\end{align*}
hence $\bG_\delta\in (L^\infty(Q_T))^{3\times 3}$. Now, we would like to test \eqref{eq:step6_F} by $\bG_\delta$, which requires mollification. As the argument is standard, we again skip it for the sake of brevity, and only mention that to treat the term with the divergence one should utilize the Commutator lemma \cite[Lemma 4.1]{BuMaLo22}, \cite[Lemma II.1]{DiPerna1989} (see also \cite[Section 6.5]{BuMaLo22} for the full description of the method for the interested readers). Before writing down the full expression after the testing, let us denote the primitive function of $s\mapsto \frac{(s - \delta)_+}{s^2}$ as
\begin{align*}
    S_\delta(s) := \left\{\begin{array}{ll}
         \ln(s) + \frac{\delta}{s},\text{ whenever }s > \delta, \\
          \ln\delta + 1,\text{ otherwise}.
    \end{array}\right.
\end{align*}
Then, using the identity (cf. \cite[Lemma A.3]{bathory2024coupling}) $\partial\ln(\det\bF) = \partial\bF:\bF^{- T}$ and the fact that $\diver_x\bv = 0$, one obtains
\begin{align}\label{eq:step7_for_detF_0_1}
    &\int_\Omega(S_\delta(\det\bF(t)) - S_\delta(\det\bF_0))\diff x\nonumber\\
    &\qquad-\int_0^t\int_\Omega\Lambda_{\varepsilon_3}(|\bF|)\frac{(\theta - \varepsilon_6)_+}{\theta}\nabla_x\bv\,\bF:\bG_\delta + \frac{\tau}{2}\frac{(\det\bF - \varepsilon_5)_+}{\det\bF}(\bF\,\bF^T\,\bF - \bF):\bG_\delta\diff x\diff s = 0.
\end{align}
Due to the fact that $\bv$ is divergence free, we may write
\begin{equation}\label{eq:step7_some_eq_1}
\begin{split}
    &\int_0^t\int_\Omega\Lambda_{\varepsilon_3}(|\bF|)\frac{(\theta - \varepsilon_6)_+}{\theta}\nabla_x\bv\,\bF:\bG_\delta\diff x\diff t = \int_0^t\int_{\Omega}\Lambda_{\varepsilon_3}(|\bF|)\frac{(\theta - \varepsilon_6)_+}{\theta}\frac{(\det\bF - \delta)_+}{\det\bF}\mathrm{tr}(\bF^{-1}\nabla_x\bv\bF)\diff x\diff s\\
    & = \int_0^t\int_{\Omega}\Lambda_{\varepsilon_3}(|\bF|)\frac{(\theta - \varepsilon_6)_+}{\theta}\frac{(\det\bF - \delta)_+}{\det\bF}\mathrm{tr}(\nabla_x\bv\,\bF\bF^{-1})\diff x\diff s = 0.
\end{split}
\end{equation}
Thus, we have from \eqref{eq:step7_for_detF_0_1}
\begin{align*}
    \int_\Omega(S_\delta(\det\bF(t)) - S_\delta(\det\bF_0))\diff x + \int_0^t\int_\Omega \frac{\tau}{2}\frac{(\det\bF - \varepsilon_5)_+}{\det\bF}\frac{(\det\bF - \delta)_+}{\det\bF}(|\bF\,\bF^T| - 3)\diff x\diff s = 0,
\end{align*}
which implies
\begin{equation}\label{eq:step7_for_detF_0_2}
    \begin{split}
        \sup_{t\in(0, T)}\left|\int_\Omega S_\delta(\det\bF(t))\diff x\right| \leq \int_{\Omega}1 + |\ln\det\bF_0|\diff x + \int_0^T\int_\Omega|\bF|^2 + 3\diff x\diff t.
    \end{split}
\end{equation}
Since the right-hand side is bounded uniformly with respect to $\delta\rightarrow0^+$, and
\begin{align*}
    S_\delta(s)\rightarrow S(s) := \left\{\begin{array}{ll}
         \ln(s),\text{ whenever }s > 0, \\
          -\infty,\text{ otherwise},
    \end{array}\right.
\end{align*}
the inequality above implies, that $\det \bF > 0$ a.e. in $Q_T$, which in principle ends the proof of a second part of our step. We will now show the bound on $\|\ln\det\bF\|_1$, which will be useful for keeping this property throughout the limiting procedures, as by Fatou's lemma it will be kept whenever we have an almost everywhere convergent sequence. From \eqref{eq:step7_for_detF_0_2}, by Fatou's lemma
\begin{align*}
    \sup_{t\in(0, T)}\left|\int_\Omega \ln\det\bF(t)\diff x\right| \leq \int_{\Omega}1 + |\ln\det\bF_0|\diff x + \int_0^T\int_\Omega|\bF|^2 + 3\diff x\diff t,
\end{align*}
hence
\begin{align*}
    \int_0^T\int_\Omega(\ln\det\bF)_-\diff x\diff t \leq \int_0^T\int_\Omega(\ln\det\bF)_+\diff x\diff t + T\int_{\Omega}1 + |\ln\det\bF_0|\diff x + \int_0^T\int_\Omega|\bF|^2 + 3\diff x\diff t,
\end{align*}
which is enough to bound the negative part. As for the positive one, we can simply write using the known inequality $\ln\det(\bF\bF^T)\leq \mathrm{tr}(\bF\bF^T) - 3 \leq |\bF\bF^T| - 3$
\begin{equation*}
    \begin{split}
         &\int_0^T\int_\Omega(\ln\det\bF)_+\diff x\diff t =  \int_{\{\det\bF > 1\}}\ln\det\bF\diff x\diff t = \frac{1}{2}\int_{\{\det\bF > 1\}}\ln(\det\bF)^2\diff x\diff t\\
         & = \frac{1}{2}\int_{\{\det\bF > 1\}}\ln(\det\bF\bF^T)\diff x\diff t \leq \frac{1}{2}\int_{\{\det\bF > 1\}}|\bF\bF^T| - 3\diff x\diff t \leq \frac{1}{2}\int_{Q_T}|\bF|^2 + 3\diff x\diff t.
    \end{split}
\end{equation*}
In the end
\begin{equation}\label{ineq:step7_ln_det_F_bound}
    \begin{split}
        \|\ln\det\bF\|_1 = \int_{Q_T}(\ln\det\bF)_+ + (\ln\det\bF)_-\diff x\diff t \leq C(|Q_T|, \|\bF\|_2,\|\ln\det\bF_0\|_1).
    \end{split}
\end{equation}
We can move to proving the non-uniform bound from below on $\det\bF$. Since we already know that $\det\bF > 0$, then $\bF^{-1}$ is well defined, and we can denote
$$
\bH := -\bF^{-T}\det\bF(\det\bF -\varepsilon_5)_-.
$$
Using standard properties of the determinant, we can bound
\begin{align*}
    (\det\bF)^2 = \det(\bF^T\bF)\leq (\mathrm{tr}(\bF^T\bF))^3 = |\bF|^3,
\end{align*}
which implies
\begin{align*}
    \|\det\bF\|_\infty \leq \|\bF\|_\infty^{3/2}.
\end{align*}
Hence,
\begin{align*}
    \|\bH\|_\infty \leq C\|\bF\|_\infty^2\|\det\bF - \varepsilon_5\|_\infty \leq C(\|\bF\|_\infty^2 + \|\bF\|_\infty^{5/2}), 
\end{align*}
and in consequence $\bH\in (L^\infty(Q_T))^{3\times 3}$. Similarly as above we now test \eqref{eq:step6_F} by $\bH$, which necessitates the use of mollification, where the procedure is the same as for $\bG_\delta$ above. One can observe that $\partial(\det\bF - \varepsilon_5)^2_- = -\partial\bF:\bF^{-T}\det\bF(\det\bF - \varepsilon_5)_-$, thus after testing, and using $\diver_x\bv = 0$, we get similarly as before
\begin{align}\label{eq:step7_for_detF_0_3}
    &\int_\Omega((\det\bF(t) - \varepsilon_5)^2_- - (\det\bF_0 - \varepsilon_5)^2_-)\diff x -\int_0^t\int_\Omega\Lambda_{\varepsilon_3}(|\bF|)\frac{(\theta - \varepsilon_6)_+}{\theta}\nabla_x\bv\,\bF:\bH\diff x\diff s\\
    &\qquad+ \int_0^t\int_\Omega\frac{\tau}{2}\frac{(\det\bF - \varepsilon_5)_+}{\det\bF}(\bF\,\bF^T\,\bF - \bF):\bH\diff x\diff s = 0.
\end{align}
By an analogous procedure as in \eqref{eq:step7_some_eq_1}
\begin{align*}
    \int_0^t\int_\Omega\Lambda_{\varepsilon_3}(|\bF|)\frac{(\theta - \varepsilon_6)_+}{\theta}\nabla_x\bv\,\bF:\bH\diff x\diff s = 0,
\end{align*}
and using the fact that
$$
(\det\bF - \varepsilon_5)_-(\det\bF - \varepsilon_5)_+ = 0,
$$
we may deduce
\begin{align*}
    \int_0^t\int_\Omega\frac{\tau}{2}\frac{(\det\bF - \varepsilon_5)_+}{\det\bF}(\bF\,\bF^T\,\bF - \bF):\bH\diff x\diff s = 0.
\end{align*}
Combining this with \eqref{eq:step7_for_detF_0_3} we get
\begin{align*}
    \sup_{t\in (0, T)}\|(\det\bF(t) - \varepsilon_5)_-\|_2 \leq \|(\det\bF_0 - \varepsilon_5)_-\|_2.
\end{align*}
Since (briefly we introduce back our indexes)
\begin{align*}
    \det\bF^{\varepsilon_3,\varepsilon_5}_0 &= \det(T^{\det}_{\varepsilon_5}(T_{\varepsilon_3}(\bF_0)))\\
    &= \det(\bF_0\mathbf{1}_{\{|\bF_0| \leq 2/\varepsilon_3\}}\mathbf{1}_{\{\det\bF_0 > \varepsilon_5\}} + \mathbb{I}\,\mathbf{1}_{\{|\bF_0| > 2/\varepsilon_3\}}\mathbf{1}_{\{\det\bF_0 \leq \varepsilon_5\}} + \mathbb{I}\,\mathbf{1}_{\{|\bF_0| > 2/\varepsilon_3\}}) > \varepsilon_5
\end{align*}
the inequality above implies
\begin{align*}
    \|(\det\bF(t) - \varepsilon_5)_-\|_2 = 0,\text{ for a.e. }t\in (0, T),
\end{align*}
and in consequence $\det\bF \geq \varepsilon_5$ a.e. in $Q_T$.\\

\noindent\underline{Step 8: convergence with $\varepsilon_2\to 0^+$.} As in the previous step, due to the energy bounds on the velocity we obtain
\begin{align*}
    \partial_t \bv^{\varepsilon_2} &\rightharpoonup^* \partial_t \bv, &&\text{ weakly* in }L^{2}(0, T; (W^{1,2}_{\bn,\diver}(\Omega))^*),\\
    \bv^{\varepsilon_2} &\rightharpoonup^* \bv, &&\text{ weakly* in }L^{\infty}(0, T; L^2(\Omega)),\\
    \bv^{\varepsilon_2} &\rightharpoonup \bv, &&\text{ weakly in }L^2(0, T; W^{1,2}_{\bn,\diver}(\Omega)),\\
    \bv^{\varepsilon_2} &\rightarrow \bv, &&\text{ strongly in }L^q(Q_T),\, q< 10/3,\\
    \bv^{\varepsilon_2} &\rightarrow \bv, &&\text{ a.e. in }Q_T.
\end{align*}
Moreover, as is standard already in this proof, one can deduce that $\{\bF^{\varepsilon_2}\}_{\varepsilon_2 > 0}$ is bounded in $(L^\infty(Q_T))^{3\times 3}$. Now, we shall repeat the reasoning given in Step 6 for convergence with $\varepsilon_4\to 0^+$ to augment the convergence of the elastic tensor and temperature. by weak lower semicontinuity of an $L^2$ norm of $\nabla_x(\theta^{1-\lambda/2})$ and the established convergence \eqref{conv:step6_nabla_F} we can deduce from \eqref{ineq:step6_nabla_theta_bound}
\begin{align}\label{ineq:step8_nabla_theta_bound}
    \int_0^T\int_{\Omega}\frac{|\nabla_x\theta^{\varepsilon_2}|^2}{(\theta^{\varepsilon_2})^{\lambda}}\diff x\diff t \leq C(\lambda,K, \varepsilon_3), \qquad \lambda\in (1, 2),
\end{align}
Then, again one can repeat the argumentation under \eqref{est1nablathetalambda} we can deduce that $\{\theta^{\varepsilon_2}\}_{\varepsilon_2>0}$ is bounded in $L^q(Q_T)$ for $q < 5/3$ and $\{\nabla_x\theta^{\varepsilon_2}\}_{\varepsilon_2>0}$ is bounded in $L^{q}(Q_T)$ for $q < 5/4$. Similarly as in Step 6, we wish to follow the argument in Subsection \ref{subsection:almost_everywhere_conv_theta}, which means that we need the equations for the entropy $\eta^{\varepsilon_2}$ and $\tilde{\psi}_{\varepsilon_2}$. As the functions $1/\theta$ and $\tilde{\psi}'_{\varepsilon_2}$ are no longer proper test functions, we may follow the argument about convergence established in Step 6. Similarly as for \eqref{eq:step6_entropy} we obtain 
\begin{equation}\label{eq:step8_entropy}
    \begin{split}
        &\partial_t\eta^{\varepsilon_2} + \diver_x (\eta^{\varepsilon_2}\bv^{\varepsilon_2}) - \diver_x\left(\kappa\frac{\nabla_x\theta^{\varepsilon_2}}{\theta^{\varepsilon_2}}\right)\\
        &= \frac{\kappa|\nabla_x\theta^{\varepsilon_2}|^2}{(\theta^{\varepsilon_2})^2} + \frac{2\nu|\bD\bv^{\varepsilon_2}|^2}{\theta^{\varepsilon_2}} + \tau \frac{(\det\bF^{\varepsilon_2} - \varepsilon_5)_+}{\det\bF^{\varepsilon_2}}\frac{g_{\varepsilon_1}(\theta^{\varepsilon_2})\tilde{\psi}_{\varepsilon_2}'(\bB^{\varepsilon_2}):((\bB^{\varepsilon_2})^2 - \bB^{\varepsilon_2})}{\theta^{\varepsilon_2}} + \mu^{\varepsilon_2}, 
    \end{split}
\end{equation}
in the sense of operators in $(W^{1,5}(Q_T))^*$, for $\eta^{\varepsilon_2} = \ln\theta^{\varepsilon_2} - g'_{\varepsilon_1}(\theta^{\varepsilon_2})\tilde{\psi}_{\varepsilon_2}(\bB^{\varepsilon_2})$. Note that we know that $T^{\varepsilon_4}\to 0$ from the end of Step 6 and discussion around \eqref{conv:step6_nabla_F}. Moreover, we have good enough convergence properties in $\varepsilon_4$ to simply converge from \eqref{eq:step6_tilde_psi} to obtain
\begin{equation}\label{eq:step8_tilde_psi}
    \begin{split}
        &\partial_t\tilde{\psi}_{\varepsilon_2}(\bB^{\varepsilon_2}) + \diver_x(\tilde{\psi}_{\varepsilon_2}(\bB^{\varepsilon_2})\bv^{\varepsilon_2}) \\
        &+ \tau\frac{(\det\bF^{\varepsilon_2} - \varepsilon_5)_+}{\det\bF^{\varepsilon_2}}\tilde{\psi}_{\varepsilon_2}'(\bB^{\varepsilon_2}):((\bB^{\varepsilon_2})^2 - \bB^{\varepsilon_2}) = 2(\bB^{\varepsilon_2} - \bI):\bD\bv^{\varepsilon_2}\frac{(\theta^{\varepsilon_2} - \varepsilon_6)_+}{\theta^{\varepsilon_2}}.
    \end{split}
\end{equation}
in the sense of operators on $C^1([0, T]\times \overline{\Omega})$ for $\tilde{\psi}_{\varepsilon_2}$ defined as in \eqref{eq:modified_psi_varepsilon2}. At this point, there are no additional terms as in Step 6, hence we may simply copy the argument in Subsection \ref{subsection:almost_everywhere_conv_theta} to deduce
\begin{align*}
    \theta^{\varepsilon_2}\rightarrow\theta\quad\text{ a.e. in }Q_T,
\end{align*}
which combined with \eqref{ineq:step8_nabla_theta_bound}, and our comments below \eqref{ineq:step8_nabla_theta_bound} gives us
\begin{align*}
    \partial_t \theta^{\varepsilon_2} &\rightharpoonup^* \partial_t \theta, &&\text{ weakly* in }L^q(0, T; (W^{1,q}(\Omega))^*), q > 5\\
    \theta^{\varepsilon_2} &\rightarrow \theta, &&\text{ strongly in }L^q(Q_T),\, q< 5/3,\\
    \nabla_x\theta^{\varepsilon_4} &\rightharpoonup \nabla_x\theta, &&\text{ weakly in }L^q(Q_T),\, q< 5/4,\\
    \theta^{\varepsilon_2} &\rightarrow \theta, &&\text{ a.e. in }Q_T,\\
    (\theta^{\varepsilon_2})^{\frac{2-\lambda}{2}} &\rightharpoonup\theta^{\frac{2-\lambda}{2}}, &&\text{ weakly in }L^2(0, T; W^{1,2}(\Omega)),\, \lambda\in (0, 1).
\end{align*}
Moreover, analysing \eqref{eq:step8_entropy}, one can obtain similarly to \eqref{estlnt} and \eqref{estgradln} that
\begin{align}\label{ineq:step8_ln_theta_bound}
    \|\ln\theta^{\varepsilon_2}\|_{L^\infty(0, T; L^1(\Omega))} + \|\ln\theta^{\varepsilon_2}\|_{L^2(0, T; W^{1,2}(\Omega))}\leq C,
\end{align}
with a constant independent of $\varepsilon_1, \varepsilon_2, \varepsilon_5, \varepsilon_6$. Hence, by the Banach--Alaoglu theorem
\begin{align*}
    \ln\theta^{\varepsilon_2}\rightharpoonup\ln\theta,\text{ weakly in }L^2(0, T; W^{1,2}(\Omega)).
\end{align*}
When it comes to $\{\bF^{\varepsilon_2}\}_{\varepsilon_2 > 0}$, one can simply follow the same argumentation as in Step 6 to deduce 
\begin{align*}
    \partial_t \bF^{\varepsilon_2} &\rightharpoonup^* \partial_t \bF, &&\text{ weakly* in }L^2(0, T; ((W^{1,2}(\Omega))^{3\times 3})^*),\\
    \bF^{\varepsilon_2} &\rightharpoonup^* \bF, &&\text{ weakly* in }(L^\infty(Q_T))^{3\times 3},\\
    \bF^{\varepsilon_2}&\rightarrow\bF, &&\text{ strongly in }L^q(Q_T),\, q< +\infty,\\
    \bF^{\varepsilon_2} &\rightarrow \bF, &&\text{ a.e. in }Q_T.
\end{align*}
By equation \eqref{eq:step6_e_with_theta}, the sequence of internal energies inherits the regularities and convergence from the temperature and the elastic tensor. We have
\begin{align*}
    \partial_t e^{\varepsilon_2} &\rightharpoonup^* \partial_t e, &&\text{ weakly* in }L^q(0, T; (W^{1,q}(\Omega))^*), q > 5\\
    e^{\varepsilon_2} &\rightarrow e, &&\text{ strongly in }L^q(Q_T),\, q< 5/3,\\
    e^{\varepsilon_2} &\rightarrow e, &&\text{ a.e. in }Q_T.
\end{align*}
And again, to augment the convergence of the velocity gradient, one can repeat the argument given under \eqref{eq:energy_limit_step4} in Step 4. All of the convergences are enough to go from \eqref{eq:step6_velocity}--\eqref{eq:step6_internal_energy} to
\begin{align}
    &\partial_t \bv + \diver_x (\Lambda_{\varepsilon_3}(|\bv|^2)\bv\otimes \bv) - \diver_x \bT_{\varepsilon_1, \varepsilon_3,\varepsilon_6}(\theta, \bF,  \bD\bv) = 0, \label{eq:step8_velocity}\\
    &\partial_t \bF + \Diver_x(\bF\otimes\bv)- \Lambda_{\varepsilon_3}(|\bF|)\nabla_x \bv\,\bF\frac{(\theta - \varepsilon_6)_+}{\theta}+ \frac{\tau(\theta)}{2}\frac{(\det\bF - \varepsilon_5)_+}{\det\bF}(\bF\,\bF^{T}\, \bF - \bF)= 0,\label{eq:step8_F}\\
    &\partial_te + \diver_x(e \bv) - \diver_x(\kappa(\theta)\nabla_x \theta)- \bT_{\varepsilon_1, \varepsilon_3, \varepsilon_6}(\theta, \bF, \bD\bv) : \bD\bv = 0,\label{eq:step8_internal_energy}
\end{align}
where \eqref{eq:step8_velocity}--\eqref{eq:step8_internal_energy} are to be understood in the weak sense in the sense of an operator on a predual space to the time derivative. Note that from \eqref{eq:step6_e_with_theta} we keep
\begin{equation}\label{eq:step8_e_with_theta}
\begin{split}
    e(t, x) = \theta(t, x) + (g_{\varepsilon_1}(\theta(t, x)) - \theta(t, x) g'_{\varepsilon_1}(\theta(t, x)))\tilde{\psi}(\bF\bF^{T}),
\end{split}
\end{equation}
for $\tilde{\psi}$ defined as in \eqref{deftildepsi}.\\

\noindent\underline{Step 9: convergence with $\varepsilon_1, \varepsilon_5, \varepsilon_6\to 0^+$.} The arguments here are analogous to the ones in the previous step. The only reason we have singled out the convergence in $\varepsilon_2$, was the possibility to obtain the equality \eqref{eq:step4_helpful_eq}. Hence, we skip the argument here. We only note that by Fatou's lemma, from \eqref{ineq:step7_ln_det_F_bound} and \eqref{ineq:step8_ln_theta_bound} we may deduce
$$
\|\ln\theta\|_{L^\infty(0, T; L^1(\Omega))}, \|\ln\det\bF\|_{L^\infty(0, T; L^1(\Omega))} < +\infty,
$$
which will imply $\theta > 0$, $\det\bF > 0$, even after we converge with the lower bounds obtained in Step 3 and Step 7 to $0$. Here, we obtain
\begin{align}
    &\partial_t \bv + \diver_x (\Lambda_{\varepsilon_3}(|\bv|^2)\bv\otimes \bv) - \diver_x \bT(\theta, \bF,  \bD\bv) = 0, \label{eq:step9_velocity}\\
    &\partial_t \bF + \Diver_x(\bF\otimes\bv)- \Lambda_{\varepsilon_3}(|\bF|)\nabla_x \bv \bF+ \frac{\tau(\theta)}{2}(\bF\,\bF^{T}\, \bF - \bF)= 0,\label{eq:step9_F}\\
    &\partial_te + \diver_x(e \bv) - \diver_x(\kappa(\theta)\nabla_x \theta)- \bT(\theta, \bF, \bD\bv) : \bD\bv = 0,\label{eq:step9_internal_energy}
\end{align}
where \eqref{eq:step9_velocity}--\eqref{eq:step9_internal_energy} are to be understood in the weak sense in the sense of an operator on a predual space to the time derivative. Note that from \eqref{eq:step8_e_with_theta} we keep
\begin{equation}\label{eq:step9_e_with_theta}
\begin{split}
    e(t, x) = \theta(t, x) + (g(\theta(t, x)) - \theta(t, x) g'(\theta(t, x)))\tilde{\psi}(\bF\bF^{T}),
\end{split}
\end{equation}
for $\tilde{\psi}$ defined as in \eqref{deftildepsi}.\\

\noindent\underline{Step 10: convergence with $\varepsilon_3\to 0^+$.} The last step is in some sense the easiest one, as we can simply copy the arguments given for weak stability of solutions. The only question is whether we have appropriate a priori bounds as in Proposition \ref{propest} and Corollary \ref{corpropest}, where Corollary \ref{corpropest} follows from using the equation together with Proposition \ref{propest}. Note that the only problem that could appear are the a priori bounds related to entropy and lambda entropy, as those are the quantities, in which we cannot keep the equality throughout the approximating sequence. Instead, looking at Step 6 and Step 8, we have
\begin{equation}\label{eq:step10_entropy}
    \begin{split}
        &\partial_t\eta^{\varepsilon_3} + \diver_x (\eta^{\varepsilon_3}\bv^{\varepsilon_3}) - \diver_x\left(\kappa\frac{\nabla_x\theta^{\varepsilon_3}}{\theta^{\varepsilon_3}}\right)\\
        &= \frac{\kappa|\nabla_x\theta^{\varepsilon_3}|^2}{(\theta^{\varepsilon_3})^2} + \frac{2\nu|\bD\bv^{\varepsilon_3}|^2}{\theta^{\varepsilon_3}} + \tau \frac{g(\theta^{\varepsilon_3})|\bB^{\varepsilon_3} - \bI|^2}{\theta^{\varepsilon_3}} + \mu^{\varepsilon_3}, 
    \end{split}
\end{equation}
in the sense of operators in $(W^{1,5}(Q_T))^*$, and
\begin{equation}\label{eq:step10_lambda_entr_equality}
    \begin{split}
        &\partial_t\eta^{\varepsilon_3}_\lambda -\tau|\bB^{\varepsilon_3} - \bI|^2(h_\lambda(\theta^{\varepsilon_3}) - g'(\theta^{\varepsilon_3})(\theta^{\varepsilon_3})^{\lambda})\\
        &\qquad+ 2(\bB^{\varepsilon_3} - \bI):\bD\bv^{\varepsilon_3}\Lambda_{\varepsilon_3}(|\bF^{\varepsilon_3}|)(h_\lambda(\theta^{\varepsilon_3}) - g'(\theta^{\varepsilon_3})(\theta^{\varepsilon_3})^{\lambda})\\
        &=(1-\lambda)\kappa\frac{|\nabla_x\theta^{\varepsilon_3}|^2}{(\theta^{\varepsilon_3})^{2 - \lambda}}\diff x + 2\nu\frac{|\bD\bv^{\varepsilon_3}|^2}{(\theta^{\varepsilon_3})^{2 - \lambda}} + \frac{g(\theta^{\varepsilon_3})|\bB^{\varepsilon_3} - \bI|^2}{(\theta^{\varepsilon_3})^{1 - \lambda}} + \nu^{\varepsilon_3}_{\lambda},
    \end{split}
\end{equation}
as an operator in $(W^{1,5}(Q_T))^*$, for $\mu^{\varepsilon_3}, \nu^{\varepsilon_3}_{\lambda}\in \mathcal{M}^+(Q_T)$. But since both equations can be tested by $1$, and the measures are positive, one can proceed in the arguments.

\bibliographystyle{plainnat}

\bibliography{name}

\end{document}